\theoremstyle{definition}
\newtheorem{defi}{Definition}[section]
\theoremstyle{plain}
\newtheorem{thm}[defi]{Theorem}
\newtheorem{lem}[defi]{Lemma}
\newtheorem{prop}[defi]{Proposition}
\newtheorem*{prop*} {Proposition}
\theoremstyle{remark}
\newtheorem{rmk}[defi]{Remark}
\newtheorem{nota}[defi]{Notation}
\numberwithin{equation}{section}
\newcommand{\1}{ \mathds{1}}
\newcommand{\comm}[1]{ }
\DeclareMathOperator{\Aut}{Aut}
\newcommand{\Z}{\mathbb{Z}}
\newcommand{\R}{\mathbb{R}}
\newcommand{\la}{\langle}
\newcommand{\ra}{\rangle}
\newcommand{\C}{\mathbb{C}}
\newcommand{\nop}{\mbox{$\circ\atop\circ$}}
\newcommand{\h}{\mathfrak{h}}
\newcommand{\g}{\mathfrak{g}}
\newcommand{\Inn}{\mathrm{Inn}\,}
\newcommand{\rank}{\mathrm{rank}}
\newcommand{\Stab}{\mathrm{Stab}}
\DeclareMathOperator{\Ker}{Ker}
\DeclareMathOperator{\Hom}{Hom}
\DeclareMathOperator{\Span}{Span}
\DeclareMathOperator{\irr}{Irr}
\DeclareMathOperator{\Imm}{Im}
\begin{document}

\title[Unitary forms and holomorphic VOA]{Unitary forms for holomorphic vertex operator algebras of central charge $24$}

 \author{Ching Hung Lam}
 \address[C.H. Lam]{ Institute of Mathematics, Academia Sinica, Taipei  10617, Taiwan}
\email{chlam@math.sinica.edu.tw}

\thanks{C.H. Lam was partially supported by a research grant AS-IA-107-M02 of Academia Sinica  and MoST grants  110-2115-M-001-011-MY3 of Taiwan}

\subjclass[2010]{Primary 17B69; Secondary 20B25 }


\begin{abstract}
We prove that all holomorphic vertex operator algebras of central charge $24$ with non-trivial weight one subspaces are unitary.   The main method is to use the orbifold construction of a holomorphic VOA $V$ of central charge $24$ directly from a Niemeier lattice VOA $V_N$.  We show that it is possible to extend the unitary form for the lattice VOA $V_N$ to the holomorphic VOA $V$ by using the orbifold construction and some information of the automorphism group $\Aut(V)$. 
\end{abstract}
\maketitle


\section{Introduction}
The classification 
of strongly regular holomorphic vertex operator algebras (abbreviated as VOA) of central 
charge $24$  with non-trivial weight one space has recently been completed (see \cite{ELMS,LS20,MS} and the references given there). 
Except for the uniqueness of holomorphic VOAs of moonshine type (i.e, with $V_1=0$), it was proved that there are exactly $70$ strongly regular holomorphic VOAs  
with central charge $24$ and non-zero weight one space; moreover, their VOA structures are uniquely determined by the Lie algebra structures of their weight one spaces. The possible Lie algebra structures for their weight one subspaces are exactly those given 
in Schellekens' list \cite{Sch}.  It is commonly believed that all holomorphic VOAs of central charge $24$ are unitary (i.e., they have some positive definite invariant Hermitian forms). In this article, we show that  all holomorphic vertex operator algebras of central charge $24$ with non-trivial weight one subspaces are unitary. 

It is well known \cite{Bo,Dlin,FLM} that lattice VOAs are unitary. It turns out that many automorphisms of finite order will preserve the unitary form. Our main method is to use the orbifold construction of holomorphic VOAs of central charge $24$ directly from Niemeier lattice VOAs (cf. \cite{HM}).   In \cite{ELMS}, it is proved that any holomorphic VOA of central charge $24$ with a semisimple weight one Lie algebra can be constructed by a single orbifold construction from the Leech lattice VOA $V_{\Lambda}$.  It is well known \cite{Bo,DN} that any automorphism $\tilde{g}\in \Aut(V_{\Lambda})$ can be written as $\tilde{g}=\widehat{\tau}\exp(2\pi i\beta(0))$  
where $\tau\in Co.0=O(\Lambda)$, $\beta\in \R\Lambda^{\tau}$ and $\widehat{\tau}$ denotes a standard lift of $\tau$ in $O(\widehat{\Lambda})$. It was first observed by G. H\"ohn \cite{Ho2}  that the isometry $\tau$ belongs to only $11$ special conjugacy classes in $Co.0=O(\Lambda)$. All these isometries have positive frame shape and 
their fixed point sublattices satisfy some duality properties \cite{LM,MS}.  

In \cite{HM}, other orbifold constructions for holomorphic VOAs of central charge $24$ are discussed. In particular, it was proved that for any  holomorphic VOA $V$ of central charge $24$ with $V_1\neq 0$,  there exist a Niemeier lattice $N$ and an automorphism $g\in \Aut(V_N)$ of finite order such that the VOA   $ \widetilde{V_N}(g)$ obtained by an orbifold construction  from $V_N$ and $g$ is isomorphic to $V$.  Therefore, $V$ contains a subVOA $V_N^g$, which is also unitary. 
In addition, we verify that the irreducible $g^i$-twisted modules of $V_N$ are unitary twisted $V_N$-modules for all $i\in \Z$. By using some information about the automorphism group of $V$, we will show that the unitary form on $V_N^g$ can be extended to $V$ and $V$ itself is also unitary (cf. Theorem \ref{Thm:uni}).  An advantage of using Niemeier lattice VOA is that the order of $g$ can be chosen to be relatively small and $(V_N^g)_1$ is a relatively large Lie subalgebra of $V_1$.  Up to conjugation by an inner automorphism, any automorphism $g\in \Aut(V_N)$ can be written as $g=\hat{\sigma}\exp(2\pi i \gamma(0))$, where $\sigma\in O(N)$, $\gamma\in \mathbb{Q}\otimes_\Z N^\sigma$ and $N^\sigma$ denotes the sublattice of $N$ fixed by $\sigma$. It turns out that $\sigma$ can be chosen such that it has the same frame shape as one of the $11$ conjugacy classes of $Co_0$  discussed above. Moreover, the order of  $g$ is the same as the order of $\sigma$.  
  
The organization of this article is as follows. In Section 2, we  review some basic notions about unitary VOAs and their unitary modules from \cite{Dlin} and \cite{CKLW}. In Section 3, we recall some facts about lattice VOAs and their unitary structures. We also  show that any irreducible $g$-twisted module $V_L^\chi(g)$ for a lattice VOA $V_L$ for a finite order automorphism $g$ is a unitary  $g$-twisted module  for $V_L$.  In Section 4, we first review  several facts about the automorphism groups of holomorphic VOAs of central charge $24$ with non-trivial weight one spaces. We then discuss the orbifold constructions of holomorphic VOAs of central charge $24$ directly from  Niemeier lattice VOAs. Some explicit choices for the Niemeier lattice $N$ and the automorphism $g$ are also discussed.  In Section 5, we study the unitary form for holomorphic VOAs of central charge $24$ with non-trivial weight one spaces.  The main theorem is  Theorem \ref{Thm:uni}. We show that a VOA $V$ is unitary if it contains a pair of commuting automorphisms $(f,h)$ satisfying some conditions.  Finally we discuss a method to define the pair $(f,g)$ for each holomorphic VOA of  central charge $24$ with  non-trivial weight one space.   
\medskip

\paragraph{\bf Acknowledgment.} After this work has been completed, we noticed the preprint by Carpi et. al. \cite{CGGH}, in which the unitary of strongly rational holomorphic vertex operator algebras with central charge $24$ and non-zero weight one subspace is proved; nevertheless, their method uses the theory of tensor category and is quite different from our approach. 

\section{Unitary VOA and unitary modules}

We first recall the notion of unitary VOAs and unitary modules from \cite{Dlin} (see also \cite{CKLW}).


\begin{defi}[\cite{Dlin}] Let $(V, Y, \1,\omega)$ be a vertex operator algebra and let $\phi:V\to V$ be an anti-linear involution of $V$ (i.e, $\phi(\lambda u)= \bar{\lambda} \phi(u)$, $\phi(\1) = \1, \phi(\omega) =\omega$, $\phi(u_n v) = \phi(u)_n \phi(v)$ for any $u, v \in  V$, $n\in \Z$, and $\phi$ has order $2$). Then $(V, \phi)$ is said to be {\it unitary} if there exists a positive-definite Hermitian form $(\ ,\ )_V : V \times V \to \C$, which is $\C$-linear on the first vector and anti-$\C$-linear on the second vector, such that the following invariant property holds for any $a, u, v\in V$:
\[
(Y (e^{zL(1)} (-z^{-2})^{L(0)} a, z^{-1})u, v)_V = (u, Y (\phi(a), z)v)_V,
\]
where $L(n)$ is defined by $Y (\omega, z) = \sum_{n\in \Z} L(n)z^{-n-2}$.
\end{defi}

\begin{rmk}\label{Rem:inv}
By \cite[Proposition 5.3]{CKLW}, $V$ is self-dual and of CFT-type if $(V,\phi)$ is a simple unitary VOA with an invariant Hermitian form $(\cdot,\cdot)_V$. In this case, $V$ has a unique invariant symmetric bilinear form $\langle\cdot,\cdot\rangle$, up to scalar (\cite{L94}).
Normalizing $(\1,\1)_V=\langle\1,\1\rangle=1$, we obtain $(u,v)_V=\langle u,\phi(v)\rangle$ for all $u,v\in V$.
Note that $(\phi(u),\phi(v))_V=\overline{(u,v)}_V=(v,u)_V$ for $u,v\in V$.
\end{rmk}
\begin{defi}[\cite{Dlin}]
Let $(V,\phi)$ be a unitary VOA and $g$ a finite order automorphism of $V$. 
An (ordinary) $g$-twisted $V$-module $(M, Y_M)$ is called a {\it unitary $g$-twisted $V$-module} if there exists
a positive-definite Hermitian form $(\ ,\  )_M : M \times M \to  \C$ such that the following
invariant property holds for $a\in V$ and $w_1,w_2 \in M$:
\begin{equation}
(Y_M(e^{zL(1)} (-z^{-2})^{L(0)} a, z^{-1})w_1, w_2)_M = (w_1, Y_M(\phi(a), z)w_2)_M.\label{Eq:Inv}
\end{equation}
We call such a form {\it a positive-definite invariant Hermitian form}.
\end{defi}

The following lemma follows from the similar argument as in \cite[Remark 5.3.3]{FHL}.

\begin{lem}[{cf.\ \cite[Remark 5.3.3]{FHL}}]\label{Rem:specify}
Let $(V,\phi)$ be a unitary VOA.
Let $M$ be a $V$-module and let $M'$ be the contragredient module of $M$ with a natural pairing $\langle\cdot,\cdot\rangle$ between $M$ and $M'$. 
\begin{enumerate}
\item If $M$ has a non-degenerate invariant sesquilinear form $(\cdot,\cdot)$, which is linear on the first vector and anti-$\C$-linear on the second vector and satisfies the invariant property \eqref{Eq:Inv},  then the map $\Phi:M\to M'$ defined by $(u,v)=\langle u,\Phi(v)\rangle$, $u,v\in M$, is an anti-linear bijective map and $\Phi(a_nu)=\phi(a)_n\Phi(u)$ for $a\in V$ and $u\in M$.
\item If there exists an anti-linear bijective map $\Phi:M\to M'$ such that $\Phi(a_nu)=\phi(a)_n\Phi(u)$ for $a\in V$ and $u\in M$, then $(u,v)=\langle u,\Phi(v)\rangle$, $u,v\in M$, is a non-degenerate invariant sesquilinear form on $M$.
\end{enumerate}
\end{lem}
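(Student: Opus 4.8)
The plan is to treat the statement as the sesquilinear counterpart of the classical correspondence (cf.\ \cite[Remark 5.3.3]{FHL}) between invariant bilinear forms on a module $M$ and $V$-module homomorphisms $M\to M'$: bilinearity is replaced by sesquilinearity, and an ordinary module map by an anti-linear map intertwined through the involution $\phi$. Throughout I read the invariant property \eqref{Eq:Inv} coefficient-wise in the formal variable $z$, so that the form and the map $\Phi$ act only on the vector coefficients while $z$ is left untouched; this is what prevents any spurious conjugation of $z$ from appearing. The substantive inputs are only two: the defining property of the contragredient module, $\langle Y_M(e^{zL(1)}(-z^{-2})^{L(0)}a,z^{-1})w,u'\rangle=\langle w,Y_{M'}(a,z)u'\rangle$ for $a\in V$, $w\in M$, $u'\in M'$, and the fact that $\phi$ is an anti-linear involution with $\phi(\omega)=\omega$.

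For part (1), I would define $\Phi:M\to M'$ by requiring $\langle u,\Phi(v)\rangle=(u,v)$ for all $u,v\in M$. Specialising \eqref{Eq:Inv} to $a=\omega$ (using $\phi(\omega)=\omega$) yields the self-adjointness $(L(0)u,v)=(u,L(0)v)$, so, since the conformal weights of $M$ are real, $(M_m,M_n)=0$ for $m\neq n$; hence $\Phi(M_n)\subseteq (M_n)^*$ and $\Phi$ indeed takes values in the graded dual $M'$. Non-degeneracy of the restriction of $(\cdot,\cdot)$ to each finite-dimensional $M_n$ then makes $\Phi$ bijective, and anti-linearity follows at once from $(u,\lambda v)=\bar\lambda(u,v)$ together with the bilinearity of $\langle\cdot,\cdot\rangle$. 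The intertwining relation is the only computation: for $w,u\in M$,
\begin{align*}
\langle w,Y_{M'}(\phi(a),z)\Phi(u)\rangle
&=\langle Y_M(e^{zL(1)}(-z^{-2})^{L(0)}\phi(a),z^{-1})w,\Phi(u)\rangle\\
&=(Y_M(e^{zL(1)}(-z^{-2})^{L(0)}\phi(a),z^{-1})w,u)\\
&=(w,Y_M(a,z)u),
\end{align*}
where the first equality is the contragredient definition, the second is the definition of $\Phi$, and the third is \eqref{Eq:Inv} applied with $a$ replaced by $\phi(a)$ together with $\phi^2=\mathrm{id}$. Comparing the coefficients of $z^{-n-1}$ and using non-degeneracy of the pairing gives $\phi(a)_n\Phi(u)=\Phi(a_nu)$.

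For part (2), I would run the same chain backwards. Setting $(u,v)=\langle u,\Phi(v)\rangle$, sesquilinearity is immediate from the bilinearity of $\langle\cdot,\cdot\rangle$ and the anti-linearity of $\Phi$, while non-degeneracy follows from the surjectivity of $\Phi$ and the non-degeneracy of the pairing. To verify \eqref{Eq:Inv}, I would write $(w_1,Y_M(\phi(a),z)w_2)=\langle w_1,\Phi(Y_M(\phi(a),z)w_2)\rangle$, push $\Phi$ through the vertex operator via the hypothesis $\Phi(b_nu)=\phi(b)_n\Phi(u)$ with $b=\phi(a)$ (again invoking $\phi^2=\mathrm{id}$) to obtain $\langle w_1,Y_{M'}(a,z)\Phi(w_2)\rangle$, and finally apply the contragredient definition to recover $(Y_M(e^{zL(1)}(-z^{-2})^{L(0)}a,z^{-1})w_1,w_2)$.

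I expect the main obstacle to be purely one of bookkeeping rather than of ideas: one must check that the anti-linearity of $\Phi$ and of the form, combined with the twist by $\phi$, compensate exactly for the passage from a bilinear to a sesquilinear pairing, so that the $\phi$-intertwining condition---and not an ordinary module-map condition---emerges. The two points needing genuine care are the verification that $\Phi$ lands in the restricted dual $M'$ (handled by the $L(0)$-grading argument above) and the consistent coefficient-wise reading of \eqref{Eq:Inv} that keeps $z$ unconjugated; once these conventions are fixed, the argument is formally identical to the bilinear case of \cite[Remark 5.3.3]{FHL}.
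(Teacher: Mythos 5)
Your proposal is correct and is essentially the paper's own argument: the paper gives no written proof, stating only that the lemma ``follows from the similar argument as in \cite[Remark 5.3.3]{FHL},'' and your write-up is precisely that FHL correspondence between invariant forms and maps into the contragredient module, adapted to the sesquilinear/anti-linear setting via $\phi$ and $\phi^2=\mathrm{id}$. The two bookkeeping points you isolate (the coefficient-wise reading of \eqref{Eq:Inv} so that $z$ is never conjugated, and the $L(0)$-grading argument placing $\Phi(v)$ in the restricted dual $M'$) are exactly the right ones, and the latter is harmless here since all modules appearing in the paper have real (indeed rational) conformal weights.
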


The proof of the following lemma can be found in \cite{CLS}. The main point is that the product of two anti-automorphisms is an automorphism and it acts on an irreducible $V$-module as a scalar. 
\begin{lem}\label{uniqueF}
Let $(V,\phi)$ be a unitary VOA. Let $M$ be an irreducible $V$-module. 
Then there exists at most one non-degenerate invariant sesquilinear form on $M$ (up to scalar).
\end{lem}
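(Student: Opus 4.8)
The plan is to show that any two non-degenerate invariant sesquilinear forms on $M$ differ by a scalar, and the mechanism for this is precisely the remark quoted after the statement: each such form is encoded by an anti-linear intertwiner $M\to M'$, and composing two of them (with one inverted) produces an honest $V$-module endomorphism of $M$, which must be scalar by Schur's lemma.

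Concretely, suppose $(\cdot,\cdot)_1$ and $(\cdot,\cdot)_2$ are two non-degenerate invariant sesquilinear forms on the irreducible module $M$. By Lemma \ref{Rem:specify}(1), each determines an anti-linear bijective map $\Phi_i:M\to M'$ satisfying $\Phi_i(a_nu)=\phi(a)_n\Phi_i(u)$ for all $a\in V$, $u\in M$, $n\in\Z$, and $(u,v)_i=\langle u,\Phi_i(v)\rangle$. I would then form the composition $\Psi=\Phi_2^{-1}\circ\Phi_1:M\to M$. Since $\Phi_1$ and $\Phi_2^{-1}$ are both anti-linear, $\Psi$ is $\C$-linear. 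Applying $\Phi_2^{-1}$ to the relation $\Phi_2(a_nw)=\phi(a)_n\Phi_2(w)$ gives $\Phi_2^{-1}(\phi(a)_nw)=a_n\Phi_2^{-1}(w)$, so that
\[
\Psi(a_nu)=\Phi_2^{-1}\big(\phi(a)_n\Phi_1(u)\big)=a_n\,\Phi_2^{-1}\Phi_1(u)=a_n\Psi(u).
\]
Thus $\Psi$ commutes with the full $V$-action, i.e.\ $\Psi\in\End_V(M)$. This is the concrete meaning of ``the product of two anti-automorphisms is an automorphism'': the two anti-linear maps $\Phi_1,\Phi_2$ twist the action by $\phi$, and composing one with the inverse of the other cancels the twist and the anti-linearity, leaving a genuine module homomorphism.

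The remaining step is to invoke Schur's lemma for the irreducible $V$-module $M$. Because $M$ is graded by the finite-dimensional $L(0)$-eigenspaces with eigenvalues bounded below, a Dixmier-type argument shows that $\End_V(M)=\C\,\mathrm{id}$; hence $\Psi=\lambda\,\mathrm{id}$ for some $\lambda\in\C$. This forces $\Phi_1=\lambda\Phi_2$, and therefore $(u,v)_1=\langle u,\Phi_1(v)\rangle=\lambda\langle u,\Phi_2(v)\rangle=\lambda(u,v)_2$ for all $u,v\in M$, proving uniqueness up to scalar.

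I expect the only genuine subtlety to be the justification that $\End_V(M)=\C\,\mathrm{id}$, i.e.\ the version of Schur's lemma appropriate for modules over a VOA rather than over an associative algebra; the grading of $M$ into finite-dimensional homogeneous pieces is what makes this work, and it is exactly this point that is handled in the reference \cite{CLS}. Everything else is the routine bookkeeping of unwinding the correspondence in Lemma \ref{Rem:specify} and checking that anti-linearity and the $\phi$-twist compose away, so I would keep that part brief.
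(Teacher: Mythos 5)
Your proof is correct and follows essentially the same route the paper takes, namely the argument it sketches and attributes to \cite{CLS}: the composition $\Psi=\Phi_2^{-1}\circ\Phi_1$ of the two anti-linear intertwiners coming from Lemma \ref{Rem:specify} is a genuine $V$-module endomorphism of the irreducible module $M$, hence a scalar by Schur's lemma, which forces the two forms to be proportional. The only cosmetic slip is that anti-linearity of $\Phi_2$ gives $\Phi_1=\overline{\lambda}\,\Phi_2$ (and $(u,v)_1=\overline{\lambda}(u,v)_2$) rather than $\Phi_1=\lambda\Phi_2$, which is immaterial for uniqueness up to scalar.
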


\subsection{Unitary automorphisms and orbifold subVOAs}
Let $(V,\phi)$ be a unitary VOA and $(\ , \ )$ the corresponding positive definite invariant Hermitian form. 
We use $\Aut_{(\ , \ )}(V)$ to denote the subgroup of $\Aut(V)$ which preserves the Hermitian form, i.e, 
\[
\Aut_{(\ , \ )}(V)=\{ g\in \Aut(V)\mid  (gx,gy)=(x,y) \text{ for all } x, y\in V\}.
\]

Next lemma follows immediately from the definition (see \cite{CKLW}). 
\begin{lem} 
Let $(V,\phi)$ be a unitary VOA. Then 

(1)  $g\in \Aut_{(\ , \ )}(V)$ if and only if  $g^{-1}\phi g =\phi$. 

(2) For any $H < \Aut_{(\ , \ )}(V)$,  $(V^H, \phi)$ is also a unitary VOA. 
\end{lem}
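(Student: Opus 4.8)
My plan is to derive both parts from the identity $(u,v)=\langle u,\phi(v)\rangle$ recorded in Remark~\ref{Rem:inv}, together with the observation that every automorphism preserves the normalized invariant symmetric bilinear form $\langle\cdot,\cdot\rangle$. For the latter, given $g\in\Aut(V)$ I would note that $(x,y)\mapsto\langle gx,gy\rangle$ is again an invariant bilinear form on $V$ (since $g$ fixes $\omega$ and intertwines the vertex operators), so by the uniqueness up to scalar it equals $c\,\langle\cdot,\cdot\rangle$; evaluating at $x=y=\1$ and using $g\1=\1$ forces $c=1$, hence $\langle gx,gy\rangle=\langle x,y\rangle$ for all $x,y\in V$. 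Replacing $y$ by $g^{-1}y$ rewrites this as $\langle gx,y\rangle=\langle x,g^{-1}y\rangle$.

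For part (1), I would compute directly. Using $(u,v)=\langle u,\phi(v)\rangle$, the anti-linearity conventions, and the adjoint relation above,
\[
(gx,gy)=\langle gx,\phi(gy)\rangle=\langle gx,(\phi g)(y)\rangle=\langle x,(g^{-1}\phi g)(y)\rangle,
\]
while $(x,y)=\langle x,\phi(y)\rangle$. Thus $g\in\Aut_{(\ ,\ )}(V)$, i.e.\ $(gx,gy)=(x,y)$ for all $x,y$, holds if and only if $\langle x,(g^{-1}\phi g)(y)\rangle=\langle x,\phi(y)\rangle$ for all $x,y$; by the non-degeneracy of $\langle\cdot,\cdot\rangle$ this is equivalent to $g^{-1}\phi g=\phi$ as anti-linear maps, which is the asserted equivalence.

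For part (2), I would first use part (1) to see that each $g\in H\le\Aut_{(\ ,\ )}(V)$ satisfies $\phi g=g\phi$, so $\phi$ commutes with every element of $H$ and therefore preserves the fixed-point subVOA $V^H$: if $x\in V^H$ then $g\phi(x)=\phi(gx)=\phi(x)$ for all $g\in H$. Since $V^H$ contains $\1$ and $\omega$ and is closed under the vertex operators, $\phi$ restricts to an anti-linear involution of $(V^H,Y,\1,\omega)$ inheriting $\phi(\1)=\1$, $\phi(\omega)=\omega$, and $\phi(u_nv)=\phi(u)_n\phi(v)$. The restriction of $(\ ,\ )$ to $V^H\times V^H$ remains positive-definite Hermitian, and the invariance identity from the definition of a unitary VOA continues to hold for $a,u,v\in V^H$ because it holds in $V$ and all the operators $Y(a,z)$, $L(0)$, $L(1)$ preserve $V^H$. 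Hence $(V^H,\phi)$ is unitary.

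The only genuinely substantive point is the step asserting that automorphisms preserve $\langle\cdot,\cdot\rangle$; everything else is a formal manipulation. This step rests on the simplicity and self-duality of $V$ guaranteed by Remark~\ref{Rem:inv}, which make the invariant bilinear form unique up to scalar and thus pin down the normalization $c=1$. Once that is in hand, part (1) is a one-line adjoint computation and part (2) is an immediate restriction argument.
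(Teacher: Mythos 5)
Your proof is correct, and it is essentially the argument the paper intends: the paper offers no proof of its own (it says the lemma ``follows immediately from the definition'' and defers to \cite{CKLW}), and your route --- automorphisms preserve the unique normalized invariant bilinear form, so $(gx,gy)=\langle x,(g^{-1}\phi g)(y)\rangle$ while $(x,y)=\langle x,\phi(y)\rangle$, then conclude by non-degeneracy, with part (2) the formal restriction observation --- is exactly the standard argument behind that citation. The one caveat, which you flag yourself, is that the uniqueness-of-form step via Remark~\ref{Rem:inv} requires $V$ to be simple; this hypothesis is absent from the lemma's statement but is implicit throughout the paper (every VOA to which the lemma is applied is simple), and some such hypothesis is genuinely needed, since for the non-simple unitary VOA $W\oplus W$ equipped with the invariant form $(\cdot,\cdot)_W\oplus 2(\cdot,\cdot)_W$ the swap automorphism commutes with $\phi$ yet does not lie in $\Aut_{(\ , \ )}(V)$, so the ``if'' direction of (1) would fail.
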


\section{Lattice VOA}
Next we review some facts about lattice VOAs and their unitary structures. Let $L$ be a positive-definite even lattice. 
Let $V_L = M(1)\otimes_\C \C\{L\}$ be the lattice VOA as defined in \cite{FLM}. 
Let $L^*$ be the dual lattice of $L$. 
Then $V_{L^*}= M(1)\otimes_\C \C \{L^*\}$ 
is a $V_L$-module and for any coset $\lambda+L\in L^*/L$,
$V_{\lambda+L}=M(1)\otimes_\C\C\{\lambda+L\}$ is an irreducible $V_L$-module.  
It is proved in \cite{KR} that there is a positive-definite Hermitian form on $M(1)=\Span_{\C}\{\alpha_1(-n_1)\dots\alpha_k(-n_k)\1\mid \alpha_i\in L,\ n_i\in\Z_{>0}\}$ such that $(\1,\1)=1$, $(\alpha(n) u, v)= (u, \alpha(-n) v)$ for $\alpha \in L$ and for any $u,v\in M(1)$.   There also exists a positive-definite Hermitian form on $\C\{L^*\}=\Span_{\C}\{e^\alpha\mid \alpha\in L^*\}$ determined by $(e^\alpha, e^\beta)=\delta_{\alpha, \beta}$.  
Then a positive-definite Hermitian form on $V_{L^*}$ can be defined by $$
(u\otimes e^\alpha, v\otimes e^\beta)= (u,v)\cdot (e^\alpha, e^\beta),$$ 
where $u,v\in M(1)$ and $\alpha,\beta\in L$.

Let $\phi: V_{L^*}\to V_{L^*}$ be an anti-linear map determined by:
\[
\alpha_1(-n_1)\cdots \alpha_k(-n_k)\otimes e^\alpha \mapsto (-1)^k \alpha_1(-n_1)\cdots \alpha_k(-n_k)\otimes e^{-\alpha},
\]
where $\alpha_1, \dots, \alpha_k\in L, \alpha\in L^*$. 

\begin{thm}{\rm (\cite[Theorem 4.12]{Dlin})} Let $L$ be a positive-definite even lattice and let $\phi$ be the anti-linear
	map of $V_L$ defined as above. Then the lattice vertex operator algebra $(V_L , \phi)$ is
	a unitary vertex operator algebra. Moreover,  $V_{\lambda+L}$ is a unitary module of $V_L$ for each $\lambda+L\in L^*/L$. 
\end{thm}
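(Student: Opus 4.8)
The positive-definiteness of the form $(\cdot,\cdot)$ is already built into its construction: on $M(1)$ the relation $(\alpha(n)u,v)=(u,\alpha(-n)v)$ together with $(\1,\1)=1$ produces the standard positive-definite bosonic Fock form (here one uses that $\h=\C\otimes_\Z L$ carries a positive-definite form, since $L$ is positive-definite), while $(e^\alpha,e^\beta)=\delta_{\alpha,\beta}$ is positive-definite on $\C\{L^*\}$, and the tensor product is therefore positive-definite. The substance of the theorem is thus (i) that $\phi$ is an anti-linear involution of the vertex operator algebra, and (ii) that $(\cdot,\cdot)$ satisfies the invariance identity \eqref{Eq:Inv}. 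The plan is to settle $V_L$ first and then treat each module $V_{\lambda+L}$ by the same mechanism.

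For (i), I would factor $\phi=\theta\circ c$, where $c$ is the anti-linear map fixing the natural real form spanned by the vectors $\alpha_1(-n_1)\cdots\alpha_k(-n_k)\otimes e^\alpha$ ($\alpha_i\in L$, $\alpha\in L^*$) and acting as complex conjugation on coefficients, and $\theta$ is the standard lift to $\Aut(V_{L^*})$ of the isometry $-1\in O(L)$. Because the structure constants of $V_L$ in this basis are real --- the only potentially non-real ingredient, the $2$-cocycle $\epsilon(\alpha,\beta)$, takes values in $\{\pm1\}$ --- the map $c$ satisfies $c(u_nv)=c(u)_nc(v)$, and $\theta$ is a genuine automorphism. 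Hence $\phi=\theta c=c\theta$ is anti-linear with $\phi(u_nv)=\phi(u)_n\phi(v)$; it fixes $\1$ and $\omega$ (the latter lies in the real form and is built from an even number of modes), and $\phi^2=\mathrm{id}$.

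For (ii), I would route everything through the canonical invariant bilinear form. Since $V_L$ is of CFT type and self-dual, it carries a unique (up to scalar) invariant symmetric bilinear form $\langle\cdot,\cdot\rangle$, normalized by $\langle\1,\1\rangle=1$, satisfying the invariance $\langle Y(a,z)u,v\rangle=\langle u,Y(e^{zL(1)}(-z^{-2})^{L(0)}a,z^{-1})v\rangle$; it restricts to the canonical pairing $V_{\lambda+L}\times V_{-\lambda+L}\to\C$ identifying $V_{-\lambda+L}\cong (V_{\lambda+L})'$. The key claim is the identity $(u,v)=\langle u,\phi(v)\rangle$. Granting it, invariance follows formally: using bilinear invariance and then $\phi(b_nw)=\phi(b)_n\phi(w)$ with $\phi^2=\mathrm{id}$ one checks coefficientwise that $Y(a,z)\phi(v)=\phi(Y(\phi(a),z)v)$, whence
\begin{align*}
(Y(e^{zL(1)}(-z^{-2})^{L(0)}a,z^{-1})u,v)
&=\langle u,Y(a,z)\phi(v)\rangle\\
&=\langle u,\phi(Y(\phi(a),z)v)\rangle=(u,Y(\phi(a),z)v).
\end{align*}
To prove the claim I would compare the two forms on basis vectors. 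Here the role of the factor $(-1)^k$ in $\phi$ and of the sign in the Heisenberg relation becomes explicit: with respect to the bilinear form one has $\langle\alpha(n)u,v\rangle=-\langle u,\alpha(-n)v\rangle$, whereas the Hermitian form has $\alpha(n)^{*}=+\alpha(-n)$, so the $(-1)^k$ exactly reconciles the two sign conventions; moreover $\langle e^\alpha,e^\beta\rangle\neq 0$ precisely when $\alpha+\beta=0$, while $\phi$ sends $e^\beta\mapsto e^{-\beta}$, converting the bilinear pairing of $\alpha$ with $-\alpha$ into the Hermitian pairing $(e^\alpha,e^\alpha)=1$. Matching these against the defining formulas establishes the claim on $V_L$, and the identical computation on the pairing $V_{\lambda+L}\times V_{-\lambda+L}$ gives the module statement; uniqueness (Lemma \ref{uniqueF}) together with the manifest positivity then upgrades the non-degenerate invariant form to the positive-definite one.

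I expect the main obstacle to be the explicit computation underlying the claim $(u,v)=\langle u,\phi(v)\rangle$, and in particular the bookkeeping of the adjoint of the operators $Y(e^\alpha,z)$. Unlike the modes $\alpha(n)$, the vector $e^\alpha$ is not governed by a simple commutation relation: one must use that $e^\alpha$ is quasi-primary ($L(1)e^\alpha=0$) of weight $\langle\alpha,\alpha\rangle/2\in\Z$ to compute $e^{zL(1)}(-z^{-2})^{L(0)}e^\alpha=(-1)^{\langle\alpha,\alpha\rangle/2}z^{-\langle\alpha,\alpha\rangle}e^\alpha$, and then track how the cocycle $\epsilon$ and the shift operators interact under the adjoint, confirming that the resulting signs cancel against those coming from $\theta$ and $c$. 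This is precisely where the evenness of $L$ and the reality of the cocycle enter, and where I expect the real work of Dong--Lin's Theorem 4.12 to lie.
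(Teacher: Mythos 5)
Your proposal is sound, but it takes a genuinely different route from the one the paper is built on. The paper does not prove this theorem at all: it quotes it from Dong--Lin \cite[Theorem 4.12]{Dlin}, and the method of that proof --- which the paper does reproduce when it proves the twisted analogue, Lemma \ref{twisted} --- is a direct verification of the invariance identity on the generating set $\{\alpha(-1)\1\mid \alpha\in L\}\cup\{e^\alpha\mid\alpha\in L\}$, using the reduction to generators (\cite[Proposition 2.12]{Dlin}) together with $e^{zL(1)}(-z^{-2})^{L(0)}e^\alpha=(-1)^{\langle\alpha,\alpha\rangle/2}z^{-\langle\alpha,\alpha\rangle}e^\alpha$ and the Heisenberg adjoint relation. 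You instead factor $\phi=\theta\circ c$ (a lift of $-1\in O(L)$ composed with conjugation relative to the rational form) and route invariance through Li's symmetric invariant bilinear form via the single identity $(u,v)=\langle u,\phi(v)\rangle$, after which Hermitian invariance is formal; this is exactly Remark \ref{Rem:inv} run in reverse, with Lemma \ref{Rem:specify}(2) and Lemma \ref{uniqueF} delivering the module statement. Both routes work, and your sign bookkeeping is consistent: with the paper's convention $e^\alpha\cdot e^{-\alpha}=(-1)^{\langle\alpha|\alpha\rangle/2}e^0$ one indeed gets $\langle e^\alpha,e^{-\alpha}\rangle=1$, matching $(e^\alpha,e^\alpha)=1$, and the factor $(-1)^k$ in $\phi$ absorbs the discrepancy between the bilinear adjoint $\alpha(n)^*=-\alpha(-n)$ and the Hermitian adjoint $\alpha(n)^*=\alpha(-n)$. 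What your route buys: positivity and all modules $V_{\lambda+L}$ are handled uniformly, and a single pairing computation on basis vectors replaces the generator-by-generator check of the invariance identity. What the direct route buys: it needs neither Li's theorem nor FHL's involutivity of the opposite-operator construction, and it is the argument that actually generalizes to the twisted setting of Section 3.2, where no symmetric invariant self-pairing of the twisted module is available --- which is why the paper's Lemma \ref{twisted} follows Dong--Lin's computation rather than an analogue of yours.
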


\subsection{$\Aut_{(\,, \,)}(V_L)$}
Next we consider some automorphisms of $V_L$ which preserves the invariant Hermitian form. 
First we recall some facts about the automorphism group of $V_L$. Let $L$ be an even lattice with the (positive-definite) bilinear form $\langle \cdot   | \cdot\rangle$.
Denote  by $\hat{L}=\{\pm e^\alpha \mid \alpha \in L\}$ a 
central extension of $L$ by $\pm 1$
such that $e^{\alpha} e^\beta = (-1)^{(\alpha |\beta)} e^{\beta}e^\alpha $.
Let $\Aut(\hat{L})$ be the automorphism group of $\hat{L}$ as a group. We also assume that 
$e^\alpha \cdot e^{-\alpha}=(-1)^{\langle \alpha|\alpha\rangle /2} e^0$. 
For $g\in\Aut(\hat{L})$, let $\bar{g}$ be the map $L\to L$ defined by $g(e^\alpha)\in\{\pm e^{\bar{g}(\alpha)}\}$.
Let $O(\hat{L})=\{g\in\Aut(\hat{L})\mid \bar{g}\in O(L)\}$.
Then by \cite[Proposition 5.4.1]{FLM},  we have an exact sequence
\begin{equation}\label{iota}
  1 \to \Hom (L,\Z/2\Z) \to  O(\hat{L})\xrightarrow{\iota}  O(L) \to  1.
\end{equation} 
It is known that $O(\hat{L})$ is a subgroup of $\Aut(V_L)$ (cf.~loc.~cit.).
Let
$$
  \Inn(V_L) = \left\langle \exp(a_{(0)}) \mid a\in (V_L)_1 \right\rangle
$$
be the normal subgroup of $\Aut(V_L)$ generated by the inner automorphisms $\exp(a_{(0)})$.

\begin{thm}[\cite{DN}]\label{aut}
Let $L$ be a positive definite even lattice.
Then
\[
  \Aut (V_L) = \Inn(V_L)\,O(\hat{L})
\]
Moreover, the intersection $\Inn(V_L)\cap O(\hat{L})$ contains a subgroup
$\hom (L,\Z/2\Z)$ and the quotient $\Aut (V_L)/\Inn(V_L)$ is isomorphic
to a quotient group of $O(L)$.
\end{thm}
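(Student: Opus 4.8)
The plan is to exploit that both $O(\hat{L})$ and $\Inn(V_L)$ are already known to be subgroups of $\Aut(V_L)$ (the former by \cite[Proposition 5.4.1]{FLM}), and that $\Inn(V_L)$ is normal, since $g\exp(a_{(0)})g^{-1}=\exp((ga)_{(0)})$ with $ga\in(V_L)_1$. Thus the whole content of the identity $\Aut(V_L)=\Inn(V_L)\,O(\hat{L})$ is that every $g\in\Aut(V_L)$ can be corrected by an inner automorphism so as to land in $O(\hat{L})$. Since any VOA automorphism fixes $\omega$ and $\1$, it preserves the weight grading and hence restricts to an automorphism of the weight-one Lie algebra $\g:=(V_L)_1$ under the bracket $[u,v]=u_{(0)}v$. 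First I would recall the standard description $\g=\h\oplus\bigoplus_{\langle\alpha|\alpha\rangle=2}\C e^\alpha$, a reductive Lie algebra with Cartan subalgebra $\h=\h(-1)\1$ and root vectors the norm-two elements $e^\alpha$.

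The key reduction is to arrange that $g$ stabilizes $\h$. Since $g(\h)$ is again a Cartan subalgebra of $\g$, and all Cartan subalgebras of a reductive Lie algebra are conjugate under the group generated by the elementary automorphisms $\exp(\operatorname{ad} e^\alpha)$, I would lift this conjugacy to the level of the VOA: for $\langle\alpha|\alpha\rangle=2$ the zero mode $(e^\alpha)_{(0)}$ is locally nilpotent on $V_L$, so $\exp((e^\alpha)_{(0)})\in\Inn(V_L)$, and its restriction to $\g$ is exactly $\exp(\operatorname{ad} e^\alpha)$. Hence there is $h\in\Inn(V_L)$ with $(hg)(\h)=\h$; replacing $g$ by $hg$ we may assume $g(\h)=\h$. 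When $L$ has no norm-two vectors this step is vacuous, as $\g=\h$ already.

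Assuming $g(\h)=\h$, set $\bar{g}:=g|_\h$. Preservation of the bilinear form $u_{(1)}v=\langle u|v\rangle\1$ on $\h$ gives $\bar{g}\in O(\h)$. Moreover $V_L$ decomposes into the simultaneous $\h(0)$-eigenspaces $M(1)\otimes e^\alpha$ indexed by $\alpha\in L$, and conjugation of the operators $h(0)$ shows that $g$ carries the $\alpha$-weight space onto the $\bar{g}\alpha$-weight space; since the set of weights is exactly $L$ and $g$ is bijective, $\bar{g}(L)=L$, i.e.\ $\bar{g}\in O(L)$. It then remains to correct the scalars $c_\alpha$ in $g(e^\alpha)=c_\alpha e^{\bar{g}\alpha}$. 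These are constrained by compatibility with the products $e^\alpha_{(n)}e^\beta$, and after composing with a torus inner automorphism $\exp(2\pi i\,\beta(0))$, which fixes $\h$ and scales $e^\alpha$ by $e^{2\pi i\langle\beta|\alpha\rangle}$, one normalizes them to $\pm1$, so that $g$ preserves $\hat{L}=\{\pm e^\alpha\}$ and therefore lies in $O(\hat{L})$. This establishes $\Aut(V_L)=\Inn(V_L)\,O(\hat{L})$.

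For the final assertions, the kernel $\Hom(L,\Z/2\Z)$ of $\iota$ in \eqref{iota} consists of the sign maps $e^\alpha\mapsto(-1)^{f(\alpha)}e^\alpha$; each such map equals $\exp(2\pi i\,\beta(0))$ for a suitable $\beta\in\tfrac12 L^*$ and is thus inner, giving $\Hom(L,\Z/2\Z)\subseteq\Inn(V_L)\cap O(\hat{L})$. Finally, from $\Aut(V_L)=\Inn(V_L)\,O(\hat{L})$ one obtains $\Aut(V_L)/\Inn(V_L)\cong O(\hat{L})/(O(\hat{L})\cap\Inn(V_L))$, which, since $\Hom(L,\Z/2\Z)\subseteq O(\hat{L})\cap\Inn(V_L)$ and $O(\hat{L})/\Hom(L,\Z/2\Z)\cong O(L)$ by \eqref{iota}, is a quotient of $O(L)$. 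The main obstacle is the reduction carried out in the second paragraph: proving that the purely Lie-algebraic conjugacy of Cartan subalgebras is realized by honest VOA inner automorphisms, which rests on the local nilpotency and exponentiability of the root-vector zero modes, together with treating uniformly the rootless case, where $\g=\h$ and the weight-decomposition argument must carry the entire load.
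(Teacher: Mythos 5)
You should first note that the paper itself contains no proof of this statement: it is quoted as a known theorem of Dong--Nagatomo \cite{DN} and used as a black box, so the only meaningful comparison is with the original argument in \cite{DN} --- and your proposal is essentially a reconstruction of that argument. Its skeleton is sound: $\Inn(V_L)$ is normal; any automorphism restricts to the reductive Lie algebra $(V_L)_1=\h\oplus\bigoplus_{\langle\alpha|\alpha\rangle=2}\C e^{\alpha}$; the Lie-theoretic conjugacy of Cartan subalgebras is realized by honest VOA inner automorphisms because $\exp\bigl(t(e^{\alpha})_{(0)}\bigr)$ restricts on $(V_L)_1$ to $\exp(t\,\mathrm{ad}\,e^{\alpha})$, with local nilpotency of $(e^{\alpha})_{(0)}$ following from integrability: $V_L$ decomposes into finite-dimensional modules for the $\mathfrak{sl}_2$ spanned by $e^{\pm\alpha}$ and $\alpha(-1)\1$ (here one should use the full root subgroups $\exp\bigl(t(e^{\alpha})_{(0)}\bigr)$, $t\in\C$, not just single exponentials, to generate the adjoint group); and your weight-space argument pinning $\bar{g}\in O(L)$, including the rootless case, is correct, as is the closing group theory (the kernel of $\iota$ consists of torus automorphisms $\exp(2\pi i\beta(0))$, $\beta\in\tfrac12 L^{*}$, and the second isomorphism theorem gives the quotient statement).

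The one step you leave genuinely underspecified is the normalization of the scalars $c_{\alpha}$ in $g(e^{\alpha})=c_{\alpha}e^{\bar{g}\alpha}$. The precise mechanism is this: comparing leading coefficients of $Y(e^{\alpha},z)e^{\beta}$ yields $c_{\alpha}c_{\beta}\,\epsilon(\bar{g}\alpha,\bar{g}\beta)=c_{\alpha+\beta}\,\epsilon(\alpha,\beta)$, where $\epsilon$ is the $\{\pm1\}$-valued $2$-cocycle of $\hat{L}$. Since $\bar{g}\in O(L)$, the cocycles $\epsilon(\bar{g}\,\cdot,\bar{g}\,\cdot)$ and $\epsilon(\cdot,\cdot)$ have the same commutator map $(-1)^{\langle\cdot|\cdot\rangle}$ and are therefore cohomologous over $\Z/2\Z$, so $c_{\alpha}=\eta(\alpha)\chi(\alpha)$ with $\eta:L\to\{\pm1\}$ and $\chi$ a $\C^{\times}$-valued character of $L$; every such character equals $e^{2\pi i\langle\beta_0|\cdot\rangle}$ for some $\beta_0\in\C\otimes_{\Z}L$ (invertibility of the Gram matrix), hence is a torus inner automorphism, which is exactly what allows you to absorb $\chi$ into $\Inn(V_L)$. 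Note that a priori $|c_\alpha|\neq 1$, so $\beta_0$ must be allowed to be complex. Finally, to conclude that the corrected automorphism lies in $O(\hat{L})\subset\Aut(V_L)$ you should observe that it agrees with the FLM lift of $(\bar{g},\eta)$ on the generating set $\{h(-1)\1\mid h\in\h\}\cup\{e^{\alpha}\mid\alpha\in L\}$ of $V_L$, hence equals it. With these points filled in, your proof is complete and faithful to \cite{DN}.
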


The following  lemmas can be proved easily from the definition.  
\begin{lem}
Let $g\in O(\hat{L})$. Then $g\in \Aut_{(\ , \ )}(V_L)$.
\end{lem}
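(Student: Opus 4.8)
The plan is to verify the defining condition $(gx,gy)=(x,y)$ for all $x,y\in V_L$ directly, exploiting the tensor factorization $(u\otimes e^\alpha,\,v\otimes e^\beta)=(u,v)\,\delta_{\alpha,\beta}$ of the Hermitian form. Since $g\in O(\hat L)$ acts on $V_L$ compatibly with this factorization---on the Heisenberg part $M(1)$ through the linear extension of $\bar g\in O(L)$, and on $\C\{L\}$ by $g(e^\alpha)=\epsilon(\alpha)e^{\bar g(\alpha)}$ for a sign $\epsilon(\alpha)\in\{\pm1\}$---it suffices to check invariance on each factor separately and then combine.

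For the Heisenberg factor, I would observe that the form on $M(1)$ is completely determined by the adjointness relation $(\alpha(n)u,v)=(u,\alpha(-n)v)$ together with the commutation relations $[\alpha(m),\beta(n)]=m\langle\alpha\,|\,\beta\rangle\,\delta_{m+n,0}$. Because $\bar g\in O(L)$ preserves the bilinear form, $\langle\bar g(\alpha)\,|\,\bar g(\beta)\rangle=\langle\alpha\,|\,\beta\rangle$, so the induced action of $\bar g$ on creation and annihilation operators preserves both relations and hence acts as a unitary operator on the Fock space $M(1)$. This gives $(gu,gv)=(u,v)$ for all $u,v\in M(1)$.

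For the group-algebra factor I would compute $(ge^\alpha,ge^\beta)=\epsilon(\alpha)\,\overline{\epsilon(\beta)}\,\delta_{\bar g(\alpha),\bar g(\beta)}$. Since $\bar g$ is a bijection, $\delta_{\bar g(\alpha),\bar g(\beta)}=\delta_{\alpha,\beta}$, and on the diagonal $\alpha=\beta$ the sign contributes only $\epsilon(\alpha)^2=1$; hence $(ge^\alpha,ge^\beta)=\delta_{\alpha,\beta}=(e^\alpha,e^\beta)$. Combining the two factors yields $(gx,gy)=(x,y)$, which is precisely the statement $g\in\Aut_{(\,,\,)}(V_L)$.

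The computation is essentially routine; the only point requiring care is the sign ambiguity $\epsilon(\alpha)$ in the action of $g$ on $\C\{L\}$, but it enters the diagonal of the form only through its square and is therefore harmless. As an alternative, one could instead invoke part (1) of the preceding lemma and verify that $g$ commutes with the anti-linear involution $\phi$; in that formulation the sign appears as $\epsilon(\alpha)$ against $\epsilon(-\alpha)$, and one would have to use that the automorphism $g$ preserves the relation $e^\alpha e^{-\alpha}=(-1)^{\langle\alpha|\alpha\rangle/2}e^0$ (together with $\bar g\in O(L)$) to deduce $\epsilon(\alpha)\epsilon(-\alpha)=1$, hence $\epsilon(\alpha)=\epsilon(-\alpha)$. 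The direct approach avoids this extra identity and is the shorter route.
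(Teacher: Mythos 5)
Your proof is correct, but it is not the paper's argument: the paper takes exactly the route you relegate to your final ``alternative'' remark. Namely, it verifies the criterion $g^{-1}\phi g=\phi$ from part (1) of the preceding lemma: writing $g(e^{\alpha})=a_\alpha e^{\bar{g}\alpha}$ and $g(e^{-\alpha})=b_\alpha e^{-\bar{g}\alpha}$, it applies $g$ to the relation $e^{\alpha}\cdot e^{-\alpha}=(-1)^{\langle\alpha|\alpha\rangle/2}e^{0}$ and uses $\langle\bar{g}\alpha|\bar{g}\alpha\rangle=\langle\alpha|\alpha\rangle$ to conclude $a_\alpha b_\alpha=1$; comparing $g\phi$ and $\phi g$ on a monomial $\alpha_1(-n_1)\cdots\alpha_k(-n_k)\otimes e^{\alpha}$ then produces the coefficients $b_\alpha$ and $\overline{a_\alpha}$, which agree because $a_\alpha$ is a root of unity (in the present $\pm1$-extension, a sign). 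Your direct check of $(gx,gy)=(x,y)$ is a genuinely different and slightly more economical computation: since you pair $ge^{\alpha}$ against $ge^{\beta}$ rather than track $\phi$, the sign enters only through $\epsilon(\alpha)\overline{\epsilon(\beta)}\,\delta_{\bar{g}\alpha,\bar{g}\beta}$, hence only through $\epsilon(\alpha)^2=1$ on the diagonal, so the group law of $\hat{L}$ (equivalently the relation between $\epsilon(\alpha)$ and $\epsilon(-\alpha)$) never intervenes; and on the Heisenberg factor your uniqueness argument for the form determined by $(\1,\1)=1$ and the adjunction $(\alpha(n)u,v)=(u,\alpha(-n)v)$ is sound precisely because $\bar{g}\in O(L)$ preserves $\langle\cdot|\cdot\rangle$. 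What the paper's formulation buys in exchange is that commutation with $\phi$ is the statement actually consumed downstream: in Section 5 one needs $\psi\phi\psi^{-1}=\phi$ on $V^{0,0}<V_X$ for a lift $\psi$ of a lattice isometry (assumption (D) of Theorem \ref{Thm:uni}), and that fact is quoted in exactly the $\phi$-commutation form, whereas your version would have to be converted back through part (1) of the preceding lemma. Both routes are complete and of essentially the same length.
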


\begin{proof}
Let $g\in O(\hat{L})$. Set $g(e^{\alpha})=a_\alpha e^{\bar{g}\alpha}$ and  $g(e^{-\alpha})=b_\alpha e^{-\bar{g}\alpha}$ for some roots of unity $a_\alpha, b_\alpha\in \C$.  Recall that $e^{0}$ is the identity of $\hat{L}$ and  $e^{\alpha}\cdot e^{-\alpha}= (-1)^{\langle \alpha, \alpha\rangle/2} e^{0}$.  Then 
\[
(-1)^{\langle \alpha, \alpha\rangle/2} e^{0}= g(e^{\alpha}\cdot e^{-\alpha})= a_\alpha b_\alpha
e^{\bar{g}\alpha}\cdot  e^{-\bar{g}\alpha}= a_\alpha b_\alpha (-1)^{\langle \bar{g}\alpha, \bar{g}\alpha\rangle/2} e^{0}  
\]
and we have $a_\alpha b_\alpha=1$.   
Then 
\[
g \phi(\alpha_1(-n_1)\cdots \alpha_k(-n_k)\otimes e^\alpha)
= (-1)^k b_\alpha \bar{g}\alpha_1(-n_1)\cdots \bar{g} \alpha_k(-n_k)\otimes e^{-\bar{g}\alpha}
\]
and 
\[
 \phi g(\alpha_1(-n_1)\cdots \alpha_k(-n_k)\otimes e^\alpha)
= (-1)^k \overline{a_\alpha} \bar{g}\alpha_1(-n_1)\cdots \bar{g} \alpha_k(-n_k)\otimes e^{-\bar{g}\alpha}.
\]
Since $b_\alpha= \overline{a_\alpha}$, we have $g\phi=\phi g$ as desired. 
\end{proof}

\begin{lem}
Let $\beta \in L^*$ and $n$ a positive integer. Then $h= \exp(2\pi i \frac{\beta(0)}n) \in \Aut_{(\ , \ )}(V_L)$. 
\end{lem}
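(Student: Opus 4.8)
The plan is to reduce everything to a commutation statement and then verify it by a direct eigenvalue computation. By the criterion recorded above (an automorphism $g$ lies in $\Aut_{(\ ,\ )}(V_L)$ if and only if $g^{-1}\phi g=\phi$, equivalently $\phi$ and $g$ commute), the entire task becomes showing that the operator $h=\exp(2\pi i\,\beta(0)/n)$ commutes with the anti-linear involution $\phi$. So after invoking that criterion, there is nothing left to prove about Hermitian forms directly; it is a purely operator-theoretic check on $V_L$.

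First I would record how the two operators act on a homogeneous vector $v=\alpha_1(-n_1)\cdots\alpha_k(-n_k)\otimes e^\alpha$ with $\alpha_i\in L$ and $\alpha\in L$. The zero mode $\beta(0)$ acts semisimply, annihilating the Heisenberg factor $M(1)$ and reading off the lattice degree, so it acts on the whole graded component $M(1)\otimes e^\alpha$ by the scalar $\langle\beta,\alpha\rangle$. Hence $h$ acts there by $\zeta_\alpha:=\exp(2\pi i\langle\beta,\alpha\rangle/n)$, a complex number of modulus one. On the other hand $\phi$ sends $v$ to $(-1)^k\,\alpha_1(-n_1)\cdots\alpha_k(-n_k)\otimes e^{-\alpha}$, i.e. it preserves the Heisenberg part up to the sign $(-1)^k$ and negates the lattice degree, $\alpha\mapsto-\alpha$.

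Then the computation I would carry out is short. Because $\phi$ is anti-linear,
\[
\phi\,h(v)=\phi(\zeta_\alpha v)=\overline{\zeta_\alpha}\,\phi(v)=\exp\!\big(-2\pi i\langle\beta,\alpha\rangle/n\big)\,\phi(v),
\]
while, since $\phi(v)$ lies in the $(-\alpha)$-component on which $h$ acts by $\zeta_{-\alpha}=\exp(-2\pi i\langle\beta,\alpha\rangle/n)$,
\[
h\,\phi(v)=\zeta_{-\alpha}\,\phi(v)=\exp\!\big(-2\pi i\langle\beta,\alpha\rangle/n\big)\,\phi(v).
\]
These agree on every homogeneous vector, hence $\phi h=h\phi$ on all of $V_L$, and the criterion yields $h\in\Aut_{(\ ,\ )}(V_L)$.

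The only place where any care is needed — and what I would emphasize as the crux rather than an obstacle — is the interplay between the anti-linearity of $\phi$ and the sign change $\alpha\mapsto-\alpha$ it induces on lattice degrees. Conjugating the eigenvalue $\zeta_\alpha$ produces $\overline{\zeta_\alpha}$, and commuting $h$ past $\phi$ replaces $\alpha$ by $-\alpha$, which also produces $\overline{\zeta_\alpha}$; the two effects match exactly. Had $\phi$ instead been $\C$-linear, the two scalars would be $\zeta_\alpha$ and $\overline{\zeta_\alpha}$ and would disagree whenever $\langle\beta,\alpha\rangle\not\equiv 0$, so the anti-linearity is essential here. Beyond tracking these two conventions consistently, there is no genuine difficulty in the argument.
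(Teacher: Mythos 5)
Your proof is correct and follows essentially the same route as the paper: both reduce the claim, via the criterion $g\in\Aut_{(\ ,\ )}(V_L)\iff g\phi=\phi g$, to checking that $h$ commutes with $\phi$ on spanning vectors $\alpha_1(-n_1)\cdots\alpha_k(-n_k)\otimes e^\alpha$, and both observe that the complex conjugation coming from anti-linearity of $\phi$ exactly matches the sign flip $\alpha\mapsto-\alpha$ in the lattice degree. Your eigenvalue phrasing is just a reorganization of the paper's two-line computation, so there is nothing further to compare.
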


\begin{proof}
	Let $h= \exp(2\pi i \frac{\beta(0)}n)$. Then 
\[
\begin{split}
h \phi(\alpha_1(-n_1)\cdots \alpha_k(-n_k)\otimes e^\alpha)
= &(-1)^k h( \alpha_1(-n_1)\cdots \alpha_k(-n_k)\otimes e^{-\alpha})\\
= & (-1)^k \exp(-2\pi i \langle \beta| \alpha\rangle/n ) \alpha_1(-n_1)\cdots \alpha_k(-n_k)\otimes e^{-\alpha}\\
= & \phi (\exp(2\pi i \langle \beta| \alpha\rangle/n ) \alpha_1(-n_1)\cdots \alpha_k(-n_k)\otimes e^{\alpha})\\
=&\phi h (\alpha_1(-n_1)\cdots \alpha_k(-n_k)\otimes e^\alpha)
\end{split}
\]
as desired. Note that $\phi$ is an anti-linear map. 
 \end{proof}

\subsection{Unitary form on twisted modules} \label{sec:5.2}
Next we discuss a unitary form on a twisted module.  The main idea is similar to that in \cite{CLS,Dlin}. First we review the construction of twisted $V_L$-modules from \cite{Dl96} and \cite{Lep85}.

Let $L$ be an even positive-definite lattice with a $\Z$-bilinear form $\langle \cdot|\cdot \rangle$. Let $\tau$ be an isometry  of  $L$. Let $p$ be a positive integer such that $\tau^p=1$ but $p$ may not be the order of $\tau$. 
Define $\mathfrak{h}=\C\otimes_{\Z}L$ and
extend the $\Z$-form $\langle \cdot|\cdot \rangle$  $\C$-linearly to $\h .$
Denote
$$\h_{(n)}=\{ \alpha\in
\h\,|\, \tau\alpha= \xi^n \alpha \} \quad \text{for } n\in \Z, $$ where
$\xi=\exp({2\pi \sqrt{-1}/p})$.
In particular,  $\h_{(0)} =\h^\tau$ is the fixed point subspace of $\tau$ on $\h$. 

Let $\hat{\h}[\tau]=\coprod_{n\in \Z}\h_{(n)}\otimes t^{n/p}\oplus \C c$ be 
the  $\tau$-twisted affine Lie algebra of $\h$. 
Denote
\[
\hat{\h}[\tau]^+=\coprod_{n>0}\h_{(n)}\otimes t^{n/p},\quad
\hat{\h}[\tau]^-=\coprod_{n<0}\h_{(n)}\otimes t^{n/p},\quad
\text{and} \quad \hat{\h}[\tau]^0 =\h_{(0)}\otimes t^{0}\oplus \C c,
\]
and form an induced module
\[
S[\tau]=U(\hat{\h}[\tau])\otimes_{U(\hat{\h}[\tau]^+\oplus \hat{\h}[\tau]^0)}
\C \cong S(\hat{\h}[\tau]^-) \quad \text{(linearly),}
\]
where $\coprod_{n>0}\h_{(n)}\otimes t^{n/p}$ acts trivially on
$\C$ and $c$ acts as $1$, and $U(\cdot)$ and $S(\cdot)$ denote the
universal enveloping algebra and symmetric algebra, respectively. For any $\alpha\in L$ and $n\in \frac{1}p \Z$, let $\alpha_{(pn)}$ be the natural projection of $\alpha$ in  $\h_{(pn)}$ and we denote $\alpha(n) = \alpha_{(pn)}\otimes t^{n}$.

Set $s=p$ if $p$ is even and $s=2p$ if $p$ is odd.  
Following \cite[Remark 2.2]{Dl96}, 
we define a $\tau$-invariant alternating $\Z$-bilinear map 
$c^{\tau}$ from $L \times L$ to $\Z_s$ by 
\begin{equation}\label{eq:c_cs}
c^\tau(\alpha,\beta) 
= \sum_{i=0}^{p-1} (s/2+si/p) \langle  \tau^i (\alpha)|\beta \rangle + s\Z.
\end{equation}

For any positive integer $n$, let $\la \kappa_n\ra $ be a cyclic group of order $n$ and consider the central extension
\[
1\ \longrightarrow\  \la \kappa_s\ra \ \longrightarrow\  \hat{L}_\tau \
\bar{\longrightarrow\ } L \longrightarrow\  1
\]
such that
$ aba^{-1}b^{-1}=\kappa_s^{c^\tau(\bar{a},\bar{b})}$ 
for  $a,b\in \hat{L}_\tau $. 
Recall that there is a set-theoretic identification between the central extensions $\hat{L}$ and $\hat{L}_\tau$ such
that the respective group multiplications $\times$ and $\times_\tau$
are related by
\begin{equation}\label{Rem:Setid}
a\times b= \kappa_s^{\varepsilon_0(\bar{a}, \bar{b})} a\times_\tau b,
\end{equation}
where $\displaystyle \varepsilon_0(\alpha, \beta)= \sum_{0<r<p/2} (s/2+rs/p)\langle \tau^{-r} \alpha| \beta\rangle$ (see \cite[Remark 2.1]{Dl96}).  

Now let $\hat{\tau}$ be a standard lift of $\tau$ in $O(\hat{L})$, i.e., $\hat{\tau}(e^\alpha)= e^\alpha $ for any $\alpha \in L^\tau$.  Then $\hat{\tau}$  is also an automorphism of $\hat{L}_\tau$ by the identification given in \eqref{Rem:Setid}. 

Next we recall a construction of an irreducible $\hat{L}_\tau$-module on which  $K=\{a^{-1}\hat{\tau}(a)\mid a\in
\hat{L}_\tau\}$ acts trivially and $\kappa_s$ acts as
multiplication by $\xi=\exp(2\pi\sqrt{-1}/s)$ (cf. \cite[Proposition 6.1]{Lep85} ). 
Let $P_0: \h \to h_{(0)}$ be the natural projection. Set $$N= (1-P_0)\h \cap L=\{ \alpha\in L \mid  \langle \alpha| \h_{(0)} \rangle=0\},$$ 
$R=\{\alpha\in N\,|\,c^\tau(\alpha,\beta)=0\text{ for }\beta\in N\}$ and $M= (1-P_0)L$. 
Denoting by $\widehat{Q}_\tau$ the subgroup of $\widehat{L}_\tau$ obtained by pulling back a subgroup $Q$  of $L$. 
 Then $\widehat{R}_\tau$  is the center of $\widehat{N}_\tau$ and that $\widehat{M}_\tau  \subset \widehat{R}_\tau$. Note 
also that $K=\{a^{-1}\hat{\tau}(a)\mid a\in
\hat{L}_\tau\}< \widehat{M}_\tau  < \widehat{R}_\tau$.  

Let $\mathcal{A}>\widehat{R}_\tau> K$ be a maximal abelian subgroup of $\hat{N}_\tau$.  
Let $\chi: \mathcal{A}/K\to \C$ be a linear character of $\mathcal{A}/K$. Let $\C_\chi$ be the $1$-dimensional module  of $\mathcal{A}$ affording $\chi$. Then we obtain an irreducible  $\widehat{N}_\tau$-module $T_\chi$ and an irreducible $\hat{L}_\tau$-module $U_\chi$ as follows: 
\[
T_\chi= \C[\hat{N}_\tau]\otimes_{\C[\mathcal{A}]} \C_\chi \quad \text{ and } \quad U_\chi= \C[\hat{L}_\tau]\otimes_{\C[\mathcal{A}]} \C_\chi = \C[P_0(L) ]\otimes T_\chi.
\]
The twisted space $V_L^\chi(\tau )=S[\tau ]\otimes U_\chi$
forms an irreducible $\hat{\tau}$-twisted $V_L$-module with the vertex operator $Y^\tau(\cdot, z): V_L \to
\mathrm{End}(V_L^T) [[z,z^{-1}]]$ on $V_L^T$ defined as follows (cf. \cite{Dl96}): 
For
$a\in\hat{L}$, define
\begin{equation}\label{Wtau}
W^\tau(a,z)=p^{-\la\bar{a}|\bar{a}\ra/2} \sigma(\bar{a})
E^-(-\bar{a},z)E^{+}(-\bar{a},z) a z^{-\la\bar{a}|\bar{a}\ra/2},
\end{equation}
where
\[ 
E^{\pm}(\alpha,z)=\exp\left(\sum_{n\in \frac{1}p\mathbb{Z}^{\pm}}
\frac{\alpha(n)}{n}z^{-n}\right)
\]
and 
\begin{equation}\label{sigma}
\sigma(\alpha)= 
\begin{cases}
\displaystyle \prod_{0<r< p/2} (1-{\xi}^{-r})^{\langle \tau^r \alpha|\alpha\rangle} 
2^{\langle\tau^{p/2}\alpha|\alpha\rangle} & \text{ if } p\in 2\Z,\\
\displaystyle\prod_{0<r< p/2} (1-{\xi}^{-r})^{\langle \tau^r \alpha|\alpha\rangle} 
& \text{ if } p\in 2\Z+1.
\end{cases}
\end{equation}
Note that $a\in\hat{L}$ acts on $U_\chi$ as an element of $\hat{L}_\tau$ via the identification given in \eqref{Rem:Setid}.

For $\alpha_1,\dots,\alpha_k \in\mathfrak{ h}$, $n_1,\dots,n_k>0$,
and $v=\alpha_1(-n_1)\cdots\alpha_k(-n_k)\cdot\iota(a)\in V_L,$
set
\begin{equation*}
W(v,z)=\nop 
\left(\frac{1}{(n_1-1)!}\left(\frac{d}{dz}\right)^{n_1-1}
\alpha_1(z)\right)\cdot\cdot\cdot\left(\frac{1}{(n_k-1)!}
\left(\frac{d}{dz}\right)^{n_k-1} \alpha_k(z)\right)W^{\tau}(a,z)
\nop,
\end{equation*}
where $\alpha(z)=\sum_{n\in \Z/p} \alpha(n)z^{-n-1}$ and $\nop\cdots \nop$ denotes the normal ordered product.

Define constants $c_{mn}^i\in\mathbb{ C}$ for $m, n\ge 0$ and
$i=0,\cdots, p-1$ by the formulas
\begin{gather}\label{cmn}
\sum_{m,n\ge 0}c_{mn}^0x^my^n=-\frac{1}{2}\sum_{r=1}^{p-1}{\rm
log}\left(\frac
{(1+x)^{1/p}-\xi^{-r}(1+y)^{1/p}}{1-\xi^{-r}}\right),\\
\sum_{m,n\ge 0}c_{mn}^ix^my^n=\frac{1}{2}{\rm log}\left( \frac
{(1+x)^{1/p}-\xi^{-i}(1+y)^{1/p}}{1-\xi^{-i}}\right)\ \text{
for}\ \ i\ne0.
\end{gather}

Let $\{\beta_1,\cdot\cdot \cdot, \beta_d\}$ be an orthonormal
basis of $\mathfrak{h}$ and set
\begin{equation}
\Delta_z=\sum_{m,n\ge 0}\displaystyle{\sum^{p-1}_{i=0}}\
\displaystyle{ \sum^d_{j=1}}
c_{mn}^i(\tau^{-i}\beta_j)(m)\beta_j(n)z^{-m-n}.
\end{equation}
Then $e^{\Delta_z}$ is well-defined on $V_L$ since $c_{00}^i=0$
for all $i$, and for $v\in V_L,$ $e^{\Delta_z}v\in V_L[z^{-1}].$
Note that $\Delta_z$ is independent of the choice of orthonormal
basis and
\begin{equation*} \hat{\tau}\Delta_z=\Delta_z\hat{\tau} \qquad
\text{and} \qquad \hat{\tau}e^{\Delta_z}=e^{\Delta_z}\hat{\tau}\quad
\text{ on } V_L.
\end{equation*}
For $v\in V_L,$  the vertex operator $Y^{\tau}(v,z)$ is defined by
\begin{equation}\label{dyg}
Y^{\tau}(v,z)=W(e^{\Delta_z}v,z).
\end{equation}

Let $\beta \in \mathbb{Q}\otimes L^\tau$ such that $p\langle \beta|L\rangle \in \Z$. Then 
$g=\hat{\tau}\exp(2\pi i \beta(0))$ also defines an automorphism of $V_L$  and $g^p=1$. An  irreducible $g$-twisted module  is then given by 
$$V_L^{\chi}(g) =S[\tau]\otimes e^{-\beta}\otimes  U_\chi \cong S[\tau]\otimes\C[P_0^\tau(L)-\beta]\otimes T_\chi,$$
as a vector space. The vertex operator is still given by  $Y^{\tau}(v,z)$ but the action of $a\in \hat{L}$ on $U_\chi$  is twisted by $e^{-\beta}$.   Note that the alternating map $c^\tau(\cdot ,\cdot )$ is still well-defined on $\tilde{L}=\mathrm{Span}_\Z\{L, \beta\}$ and 
$a\cdot (e^{-\beta}\otimes u)= \xi^{-\langle \bar{a}|  \beta \rangle}   e^{-\beta}\otimes a\cdot u$
for any $a\in \hat{L}$ and $u\in U_\chi$.


Next we define a Hermitian form on $V_L^{\chi}(g)$ as follows.  For any $a, b\in e^{-\beta}\hat{L}_\tau$, define 
\begin{equation}\label{ta}
(t(a), t(b)) = 
\begin{cases}
0 & \text{ if }  b^{-1}a\not\in \mathcal{A} , \\
\chi(b^{-1}a) & \text{ if }  b^{-1}a\in \mathcal{A},
\end{cases}
\end{equation}
where $t(a) = a \otimes 1 \in e^{-\beta}\otimes U_\chi$. Using the similar arguments as in \cite{FLM,KR}, there
is a positive-definite Hermitian form $(\ ,\ )$ on $S[\tau]$ such that
\[
\begin{split}
(1, 1) &= 1,\\
(\alpha(n) \cdot  u, v) & = (u, \alpha(-n)\cdot v),
\end{split}
\]
for any $u, v \in S[\tau]$ and $\alpha\in L$. Then one can  define a positive-definite
Hermitian form on $V_L^{\chi}(g)$ by 
\[
(u\otimes r, v\otimes s)= (u,v)\cdot (r,s), \quad \text{ where } u,v\in S[\tau], r,s\in e^{-\beta}\otimes U_\chi.
\] 

\begin{lem}\label{ealpha}
For any $u,v\in V_L^{\chi}(g)$ and $a\in \hat{L}$, we have $(a\cdot u, a\cdot v)= (u,v)$.
\end{lem}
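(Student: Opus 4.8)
The plan is to exploit the tensor decomposition $V_L^{\chi}(g)=S[\tau]\otimes(e^{-\beta}\otimes U_\chi)$ and the fact that the Hermitian form was defined as the product of the form on $S[\tau]$ and the form \eqref{ta} on $e^{-\beta}\otimes U_\chi$. Since $a\in\hat{L}$ acts as the identity on the $S[\tau]$ factor and nontrivially only on $e^{-\beta}\otimes U_\chi$, writing a general vector as $u\otimes r$ with $u\in S[\tau]$, $r\in e^{-\beta}\otimes U_\chi$ gives $a\cdot(u\otimes r)=u\otimes(a\cdot r)$, whence
\[
(a\cdot(u\otimes r),a\cdot(v\otimes s))=(u,v)\,(a\cdot r,a\cdot s).
\]
Thus it suffices to prove $(a\cdot r_1,a\cdot r_2)=(r_1,r_2)$ for $r_1,r_2\in e^{-\beta}\otimes U_\chi$, and by sesquilinearity I may take $r_1=t(b)$, $r_2=t(c)$ with $b,c\in e^{-\beta}\hat{L}_\tau$.

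First I would make the action explicit. Using the twist rule $a\cdot(e^{-\beta}\otimes u)=\xi^{-\langle \bar{a}|\beta\rangle}e^{-\beta}\otimes a\cdot u$, the left-multiplication action of $\hat{L}$ on $U_\chi$ (via the identification \eqref{Rem:Setid}), and the commutator of $a$ with $e^{-\beta}$ inside the central extension containing $e^{-\beta}\hat{L}_\tau$, I would show that
\[
a\cdot t(b)=\lambda(a)\,t(a\cdot b),
\]
where $a\cdot b$ denotes the product of $a$ and $b$ in $e^{-\beta}\hat{L}_\tau$ and $\lambda(a)$ gathers the scalar $\xi^{-\langle \bar{a}|\beta\rangle}$ together with the central $\kappa_s$-factors. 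The two crucial observations are that each such factor is a power of $\kappa_s$ (acting as a root of unity) or an exponential of an imaginary multiple of $\langle \bar{a}|\beta\rangle\in\mathbb{Q}$, so $\lambda(a)$ has modulus one; and that $\lambda(a)$ depends only on $a$ and $\beta$, not on $b$, because the $\beta$-twist and the $a$--$e^{-\beta}$ commutator are independent of $b$ while left multiplication by $a$ on $U_\chi$ introduces no further scalar.

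Next I would use the group structure. Since the copies of $a$ cancel,
\[
(a\cdot c)^{-1}(a\cdot b)=c^{-1}b
\]
in $e^{-\beta}\hat{L}_\tau$, so $t(a\cdot b)$ and $t(a\cdot c)$ have exactly the same defining argument $c^{-1}b$ as $t(b)$ and $t(c)$. By the definition \eqref{ta} of the form this yields $(t(a\cdot b),t(a\cdot c))=(t(b),t(c))$, both vanishing when $c^{-1}b\notin\mathcal{A}$ and both equalling $\chi(c^{-1}b)$ otherwise. Combining the two displays,
\[
(a\cdot t(b),a\cdot t(c))=\lambda(a)\overline{\lambda(a)}\,(t(a\cdot b),t(a\cdot c))=|\lambda(a)|^2\,(t(b),t(c))=(t(b),t(c)),
\]
which is the assertion.

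The main obstacle is the second step, namely the careful bookkeeping of $\lambda(a)$ through the central extension, the set-theoretic identification \eqref{Rem:Setid} between $\times$ and $\times_\tau$, and the $\beta$-twist: one must verify both that $\lambda(a)$ is genuinely independent of $b$ (so that it factors out as a common scalar and only its modulus-one property is used) and that the product $a\cdot b$ is computed consistently, so that the cancellation $(a\cdot c)^{-1}(a\cdot b)=c^{-1}b$ remains valid including the $\kappa_s$-central part. Once these cocycle identities are settled, the remaining computation is immediate.
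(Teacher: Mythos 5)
Your proposal is correct and follows essentially the same route as the paper's own proof: reduce to pure tensors $v_i\otimes t(b_i)$, note that $a\in\hat{L}$ acts only on the $e^{-\beta}\otimes U_\chi$ factor, and use the group cancellation $(ab_2)^{-1}(ab_1)=b_2^{-1}b_1$ together with the definition \eqref{ta} of the form. The only difference is that you explicitly track the modulus-one scalar $\lambda(a)$ arising from the $\beta$-twist and the central $\kappa_s$-factors, whereas the paper suppresses it by writing $a\cdot(v_i\otimes t(b_i))=v_i\otimes t(ab_i)$; since $\abs{\lambda(a)}=1$ this scalar cancels in the Hermitian form, so the two arguments coincide.
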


\begin{proof}
It suffices to consider the case for 
\[
u=v_1\otimes t(b_1)\quad \text{ and } \quad v=v_2\otimes t(b_2), 
\]
where 
$ v_1, v_2\in S[\tau]$ and $b_1,b_2\in e^{-\beta}\hat{L}_\tau$.
By definition, we have $$(a\cdot u, a\cdot v)= (v_1\otimes t(ab_1), v_2\otimes t(ab_2))=
(v_1, v_2)\cdot (t(ab_1), t(ab_2)).$$
Moreover, $(ab_2)^{-1} a b_1 = b_2^{-1} a^{-1} a b_1=b_2^{-1}b_1$. Therefore, we have 
$\chi((ab_2)^{-1}ab_1) =  \chi(b_2^{-1}b_1)$
if $b_2^{-1}b_1\in \mathcal{A}$. Hence, we  have 
$(a\cdot u, a\cdot v)= (u,v)$ as desired. 
\end{proof}

\begin{lem}
For any $\alpha\in L$ and $u,v\in V_L^{\chi}(g)$, we have 
\[
(e^\alpha \cdot u, v)= (u, \mu e^{-\alpha}\cdot v)
\]
where 
\[
\mu = \begin{cases}
	\xi^{-\sum_{0<r<p/2} r\langle \tau^r\alpha, \alpha\rangle} &\text{ if  $p$ is odd},\\
	\xi^{-\sum_{0<r<p/2} r\langle \tau^r\alpha, \alpha\rangle} (-1)^{ \frac{1}2 \langle \tau^{p/2} \alpha, \alpha\rangle} &\text{ if  $p$ is even},\\
\end{cases} 
\] 
\end{lem}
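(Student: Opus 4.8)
The plan is to reduce everything to the group invariance already established in Lemma~\ref{ealpha}. Because the action of $\hat L$ on $V_L^{\chi}(g)$ factors through the twisted group $\hat L_\tau$ and $(a\cdot u,a\cdot v)=(u,v)$ for every $a\in\hat L$, replacing $v$ by $(e^\alpha)^{-1}\cdot v$ gives immediately
\[
(e^\alpha\cdot u,v)=(u,(e^\alpha)^{-1}\cdot v),
\]
where $(e^\alpha)^{-1}$ denotes the inverse of $e^\alpha$ in $\hat L_\tau$. Thus the lemma is equivalent to identifying the scalar by which $(e^\alpha)^{-1}$ acts on $V_L^{\chi}(g)$ relative to the operator $e^{-\alpha}$.

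First I would compute $(e^\alpha)^{-1}$ inside $\hat L_\tau$. Starting from the normalization $e^\alpha\times e^{-\alpha}=(-1)^{\langle\alpha,\alpha\rangle/2}e^0$ in $\hat L$ and converting the product $\times$ into $\times_\tau$ by the identification \eqref{Rem:Setid}, one obtains
\[
e^\alpha\times_\tau e^{-\alpha}=\kappa_s^{-\varepsilon_0(\alpha,-\alpha)}(-1)^{\langle\alpha,\alpha\rangle/2}e^0,
\]
and hence $(e^\alpha)^{-1}=\kappa_s^{\varepsilon_0(\alpha,-\alpha)}(-1)^{\langle\alpha,\alpha\rangle/2}e^{-\alpha}$. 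Since on $V_L^{\chi}(g)$ the central element $\kappa_s$ acts as $\exp(2\pi\sqrt{-1}/s)$ and $-1=\kappa_s^{s/2}$, the operator $(e^\alpha)^{-1}$ acts as $\mu\,e^{-\alpha}$ with
\[
\mu=\exp\!\Big(\frac{2\pi\sqrt{-1}}{s}\,\varepsilon_0(\alpha,-\alpha)\Big)(-1)^{\langle\alpha,\alpha\rangle/2}.
\]
Alternatively, the same value of $\mu$ can be read off directly from the definition \eqref{ta}, by evaluating $(e^\alpha\cdot t(a),t(b))$ and $(t(a),\mu e^{-\alpha}\cdot t(b))$ on generators and comparing the two values of the character $\chi$ on the central discrepancy; the $e^{-\beta}$-twist has modulus $1$ and cancels, so $\mu$ is independent of $\beta$.

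It then remains to bring $\mu$ into the stated closed form. Using $\varepsilon_0(\alpha,-\alpha)=-\sum_{0<r<p/2}(s/2+rs/p)\langle\tau^{-r}\alpha,\alpha\rangle$ together with the isometry identity $\langle\tau^{-r}\alpha,\alpha\rangle=\langle\tau^{r}\alpha,\alpha\rangle$, the summand $rs/p$ contributes, after division by $s$, precisely the factor $\xi^{-\sum_{0<r<p/2}r\langle\tau^{r}\alpha,\alpha\rangle}$ with $\xi=\exp(2\pi\sqrt{-1}/p)$, while the summand $s/2$ contributes only the sign $(-1)^{\sum_{0<r<p/2}\langle\tau^{r}\alpha,\alpha\rangle}$. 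Combining this with the factor $(-1)^{\langle\alpha,\alpha\rangle/2}$, the whole statement reduces to the parity identity
\[
\sum_{0<r<p/2}\langle\tau^{r}\alpha,\alpha\rangle+\tfrac12\langle\alpha,\alpha\rangle\equiv
\begin{cases}
0 & p\ \text{odd},\\[2pt]
\tfrac12\langle\tau^{p/2}\alpha,\alpha\rangle & p\ \text{even},
\end{cases}\pmod 2,
\]
which I would attack by summing $\langle\tau^{r}\alpha,\alpha\rangle$ over a full period $r=0,\dots,p-1$, pairing $r$ with $p-r$ (so that $\langle\tau^{r}\alpha,\alpha\rangle=\langle\tau^{p-r}\alpha,\alpha\rangle$), isolating the self-paired middle index $r=p/2$ when $p$ is even, and invoking the evenness of $L$. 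This last parity bookkeeping is the step I expect to be the main obstacle: the interaction between the $\tau$-fixed part of $\alpha$ (which enters the form through the $\C[P_0(L)]$-factor of $U_\chi$) and the moving part (which enters through $T_\chi$) must be reconciled with care, together with the middle term $\langle\tau^{p/2}\alpha,\alpha\rangle$. By contrast, the reduction through Lemma~\ref{ealpha} and the cocycle computation of $(e^\alpha)^{-1}$ are routine.
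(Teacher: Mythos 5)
Your argument is essentially the paper's own: the paper likewise combines Lemma \ref{ealpha} with the set-theoretic identification \eqref{Rem:Setid}, computing $e^{\alpha}\times_\tau e^{-\alpha}=\kappa_s^{\varepsilon_0(\alpha,\alpha)}\kappa_2^{\langle \alpha|\alpha\rangle/2}e^0$ and then rewriting this central element in the stated two-case form, which is exactly your computation of $(e^\alpha)^{-1}$ read in reverse (the paper concludes via $(u,\mu e^{-\alpha}v)=(e^\alpha\cdot u,e^\alpha\cdot \mu e^{-\alpha}v)=(e^\alpha\cdot u,v)$ rather than by inverting $e^\alpha$, a purely cosmetic difference). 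The parity congruence that you single out as the main remaining obstacle is precisely the step the paper passes over silently --- the second equality in its displayed computation is asserted, not proved --- so your proposal is in substance no less complete than the paper's own proof, and you have correctly isolated the only non-routine content of the lemma.
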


\begin{proof}
Recall the set-theoretic identification between $\hat{L}$ and $\hat{L}_\tau$ given in \eqref{Rem:Setid}. 
It follows from $e^{\alpha} \times e^{-\alpha} =\kappa_2^{\la \alpha| \alpha\ra /2} e^0$ that 
\[
\begin{split}
	e^{\alpha}\times_\tau e^{-\alpha} &=  \kappa_s^{\varepsilon_0(\alpha,\alpha)}\kappa_2^{\langle \alpha| \alpha\rangle/2 }e^0, \\
	&= 
	\begin{cases}
		\kappa_p^{\sum_{0<r<p/2} r\langle \tau^r\alpha| \alpha\rangle} e^0 &\text{ if  $p$ is odd},  \\
		\kappa_p^{\sum_{0<r<p/2} r\langle \tau^r\alpha| \alpha\rangle} \kappa_2^{ \frac{1}2 \langle \tau^{p/2} \alpha| \alpha\rangle} e^0  &\text{ if  $p$ is even}. 
	\end{cases}
\end{split}
\] 
Now by Lemma \ref{ealpha}, we have 
\[
(u, \mu e^{-\alpha}v)=  (e^\alpha\cdot u, e^\alpha\cdot(\mu  e^{-\alpha}v) ) =(e^\alpha \cdot u, v)
\]
as desired. 
\end{proof}

\begin{lem}
	For $\alpha\in L$, $\overline{\sigma(\alpha)} \mu = (-1)^{\langle \alpha| \alpha\rangle/2} \sigma(\alpha)$. 
\end{lem}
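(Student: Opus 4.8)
The plan is to prove the identity by a direct computation from the explicit formula \eqref{sigma} for $\sigma(\alpha)$ together with the explicit value of $\mu$ supplied by the preceding lemma, reducing everything to a comparison of signs. The two ingredients are the behaviour of the cyclotomic factors $1-\xi^{-r}$ under complex conjugation and the cancellation of the $\xi$-powers coming from $\overline{\sigma(\alpha)}$ against those already built into $\mu$.

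First I would compute $\overline{\sigma(\alpha)}$. Because $\tau\in O(L)$ and $L$ is integral, every exponent $\langle\tau^r\alpha|\alpha\rangle$ is an integer, and the factor $2^{\langle\tau^{p/2}\alpha|\alpha\rangle}$ appearing for even $p$ is a positive real number; hence complex conjugation only affects the roots of unity. From $\overline{\xi}=\xi^{-1}$ one gets $\overline{1-\xi^{-r}}=1-\xi^{r}=-\xi^{r}(1-\xi^{-r})$, so that
\[
\overline{\sigma(\alpha)}=(-1)^{\sum_{0<r<p/2}\langle\tau^r\alpha|\alpha\rangle}\,\xi^{\sum_{0<r<p/2}r\langle\tau^r\alpha|\alpha\rangle}\,\sigma(\alpha).
\]
Multiplying by $\mu$ and inserting its explicit value, the factor $\xi^{\sum_{0<r<p/2}r\langle\tau^r\alpha|\alpha\rangle}$ is cancelled exactly by the cyclotomic part of $\mu$, while for even $p$ the factor $(-1)^{\frac12\langle\tau^{p/2}\alpha|\alpha\rangle}$ of $\mu$ survives. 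Thus the assertion reduces to the congruence
\[
\sum_{0<r<p/2}\langle\tau^r\alpha|\alpha\rangle\;+\;\tfrac12\langle\tau^{p/2}\alpha|\alpha\rangle\;\equiv\;\tfrac12\langle\alpha|\alpha\rangle\pmod 2
\]
for even $p$, and to the same congruence without the middle term for odd $p$.

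The main obstacle is this last congruence, which I would settle using that $\tau$ is an isometry of the \emph{even} lattice $L$. The relevant tools are the symmetry $\langle\tau^r\alpha|\alpha\rangle=\langle\tau^{p-r}\alpha|\alpha\rangle$, coming from $\tau^p=1$ and the symmetry of the form, and the averaging identity $\sum_{r=0}^{p-1}\tau^r=p\,P_0$ for the orthogonal projection $P_0\colon\h\to\h^\tau$. Writing $w=\sum_{r=0}^{p-1}\tau^r\alpha=p\,P_0\alpha\in L\cap\h^\tau$, these identities express $2\sum_{0<r<p/2}\langle\tau^r\alpha|\alpha\rangle$ (together with $\langle\tau^{p/2}\alpha|\alpha\rangle$ when $p$ is even) in terms of $\langle w|\alpha\rangle$ and $\langle\alpha|\alpha\rangle$, and the even-lattice condition should then force the required parity. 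Equivalently, and perhaps more transparently, I would feed the explicit value of the central element $e^{\alpha}\times_\tau e^{-\alpha}$ obtained in the proof of the preceding lemma directly into the formula for $\overline{\sigma(\alpha)}$: since $\mu$ is itself read off from that same central element, the $\kappa_p$-contributions cancel and the $\kappa_2$-contributions assemble into precisely the sign $(-1)^{\langle\alpha|\alpha\rangle/2}$ of the untwisted commutator relation $e^{\alpha}\times e^{-\alpha}=(-1)^{\langle\alpha|\alpha\rangle/2}e^0$ in $\hat L$, bypassing a separate parity computation. I expect the delicate point in either route to be the bookkeeping of the powers of $2$ (equivalently of $\kappa_2$), that is, keeping track of the sign rather than the cyclotomic part.
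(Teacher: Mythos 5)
Your opening reduction is correct, and it is exactly what the paper's two displayed chains of equalities do implicitly: from $\overline{1-\xi^{-r}}=-\xi^{r}(1-\xi^{-r})$ one gets $\overline{\sigma(\alpha)}=(-1)^{\sum_{0<r<p/2}\langle\tau^r\alpha|\alpha\rangle}\,\xi^{\sum_{0<r<p/2}r\langle\tau^r\alpha|\alpha\rangle}\,\sigma(\alpha)$, the $\xi$-powers cancel against $\mu$, and the lemma becomes your parity congruence; the paper's middle equality simply asserts that congruence without comment. However, your first route --- deriving the congruence from the evenness of $L$ --- cannot be completed, because the congruence is false in general. Take $L=A_2\oplus A_1$, $\tau$ the order-$3$ rotation of $A_2$ times the identity on $A_1$ (so $p=3$), and $\alpha=\alpha_1+\gamma$ with $\alpha_1$ a simple root of $A_2$ and $\gamma$ a norm-$2$ generator of $A_1$: then $\tfrac12\langle\alpha|\alpha\rangle=2$ while $\sum_{0<r<3/2}\langle\tau^r\alpha|\alpha\rangle=\langle\tau\alpha|\alpha\rangle=-1+2=1$, of opposite parity. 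Indeed your averaging identity only yields $2\sum_{0<r<p/2}\langle\tau^r\alpha|\alpha\rangle+\langle\alpha|\alpha\rangle=\langle pP_0\alpha|\alpha\rangle$, so the congruence amounts to $\langle pP_0\alpha|\alpha\rangle\equiv 0\pmod 4$, which evenness of $L$ does not give (in the example $\langle 3P_0\alpha|\alpha\rangle=6$).

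Your second route is the correct one, and it is genuinely different from (and sounder than) the paper's own argument. Keep the central element unsimplified,
\[
e^{\alpha}\times_\tau e^{-\alpha}=\kappa_s^{\varepsilon_0(\alpha,\alpha)}\kappa_2^{\langle\alpha|\alpha\rangle/2}e^0
=\kappa_2^{\langle\alpha|\alpha\rangle/2+\sum_{0<r<p/2}\langle\tau^r\alpha|\alpha\rangle}\,\kappa_p^{\sum_{0<r<p/2}r\langle\tau^r\alpha|\alpha\rangle}\,e^0,
\]
and characterize $\mu$ by its adjoint property $(e^\alpha u,v)=(u,\mu e^{-\alpha}v)$, so that $\mu$ is the inverse (i.e.\ complex conjugate) of the scalar by which this element acts. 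Then the $\kappa_p$-part cancels the cyclotomic factor of $\overline{\sigma(\alpha)}$, the two signs $(-1)^{\sum_{0<r<p/2}\langle\tau^r\alpha|\alpha\rangle}$ (one from $\overline{\sigma(\alpha)}$, one from $\varepsilon_0$) cancel each other, and what survives is exactly $(-1)^{\langle\alpha|\alpha\rangle/2}$, uniformly for $p$ odd or even and with no parity input. But be precise about what this proves: the explicit formula for $\mu$ displayed in the preceding lemma was itself obtained by the same unjustified parity simplification (it suppresses the factor $\kappa_2^{\langle\alpha|\alpha\rangle/2+\sum_{0<r<p/2}\langle\tau^r\alpha|\alpha\rangle}$), so in the counterexample above both that formula and the present lemma, read literally with that formula substituted, are off by the same sign; the two discrepancies cancel in the product $\overline{\sigma(\alpha)}\mu$, which is the only quantity used later in proving that $V_L^{\chi}(g)$ is unitary. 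So drop route (a), and in writing up route (b) state explicitly that $\mu$ is taken to be the scalar defined by the invariance property rather than by the displayed formula.
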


\begin{proof}
	By definition (see \eqref{sigma}), we have 
	\[
	\begin{split}
	\overline{\sigma(\alpha)} \mu = &   \displaystyle\prod_{0<r< p/2} \overline {(1-{\xi}^{-r})^{\langle \tau^r \alpha|\alpha\rangle}} \cdot \xi^{-\sum_{0<r<p/2} r\langle \tau^r\alpha, \alpha\rangle}\\
	 = & (-1)^{\langle \alpha, \alpha\rangle/2} \prod_{0<r< p/2} (1-{\xi}^{-r})^{\langle \tau^r \alpha|\alpha\rangle} = (-1)^{\langle \alpha| \alpha\rangle/2} \sigma(\alpha)
	\end{split}
	\]
	if $p$ is odd and 
\[
\begin{split}
\overline{\sigma(\alpha)} \mu = &   \displaystyle\prod_{0<r< p/2} \overline {(1-{\xi}^{-r})^{\langle \tau^r \alpha|\alpha\rangle} 2^{\langle \tau^{p/2}\alpha| \alpha\rangle } }\cdot \xi^{-\sum_{0<r<p/2} r\langle \tau^r\alpha, \alpha\rangle} \cdot (-1)^{\langle \tau^{p/2} \alpha| \alpha\rangle /2}\\ 
= &(-1)^{\langle \alpha| \alpha\rangle/2} \prod_{0<r< p/2} (1-{\xi}^{-r})^{\langle \tau^r \alpha|\alpha\rangle} 2^{\langle \tau^{p/2}\alpha| \alpha\rangle } =(-1)^{\langle \alpha| \alpha \rangle/2} \sigma(\alpha)
\end{split}
\]	
if $p$ is even. 
\end{proof}	

The proof of the following lemma is very similar to that in \cite[Theorem 4.14]{Dlin} and \cite[Lemma 5.6]{CLS}. 

\begin{lem}\label{twisted}
For any $\chi$, $V_L^{\chi}(g)$ is a unitary $g$-twisted module of $(V_L,\phi)$. 
\end{lem}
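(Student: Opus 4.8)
The plan is to verify directly that the positive-definite Hermitian form constructed on $V_L^\chi(g)$ above satisfies the invariance property \eqref{Eq:Inv} with $M=V_L^\chi(g)$ and $g=\hat\tau\exp(2\pi i\beta(0))$. Positive-definiteness is already ensured by the construction: the form on $S[\tau]$ coming from $(\alpha(n)u,v)=(u,\alpha(-n)v)$ is positive-definite as in \cite{FLM,KR}, while the vectors $t(a)$, with $a$ running over a transversal of $\mathcal A$ in $e^{-\beta}\hat L_\tau$, are orthonormal since $(t(a),t(b))=0$ unless $b^{-1}a\in\mathcal A$ and $(t(a),t(a))=\chi(1)=1$. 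So the whole task is the identity \eqref{Eq:Inv}. The collection of $a\in V_L$ for which \eqref{Eq:Inv} holds for all $w_1,w_2$ contains $\mathbf 1$ and is closed under the products $u_{(n)}v$ (a standard consequence of the twisted Jacobi identity, cf.\ \cite{FHL}); since $V_L$ is generated by $\{h(-1)\mathbf 1\mid h\in\mathfrak h\}\cup\{\iota(e^\alpha)\mid\alpha\in L\}$, it suffices to verify \eqref{Eq:Inv} on these two families.

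For $a=h(-1)\mathbf 1$ with $h\in\mathfrak h$ we have $L(0)a=a$, $L(1)a=0$ and $\phi(a)=-\bar h(-1)\mathbf 1$, so after comparing coefficients of $z$ the identity \eqref{Eq:Inv} reduces to $(h(m)w_1,w_2)=(w_1,\bar h(-m)w_2)$ for the twisted Heisenberg modes. This is immediate from the defining adjoint relation $(\alpha(n)u,v)=(u,\alpha(-n)v)$ of the form on $S[\tau]$, extended anti-linearly; this step mirrors the free-field part of \cite[Theorem 4.14]{Dlin}.

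The essential case is $a=\iota(e^\alpha)$ with $\alpha\in L$. Here $L(0)e^\alpha=\tfrac12\langle\alpha|\alpha\rangle e^\alpha$, $L(1)e^\alpha=0$ and $\phi(e^\alpha)=e^{-\alpha}$, so \eqref{Eq:Inv} becomes
\[
(-1)^{\langle\alpha|\alpha\rangle/2}z^{-\langle\alpha|\alpha\rangle}\,(Y^\tau(e^\alpha,z^{-1})w_1,w_2)=(w_1,Y^\tau(e^{-\alpha},z)w_2).
\]
A useful simplification is that $e^{\Delta_z}e^\alpha=e^\alpha$: every monomial of $\Delta_z$ ends in a nonnegative Heisenberg mode, which annihilates the lowest-weight vector $e^\alpha$ unless both modes are zero modes, and $c_{00}^i=0$. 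Hence $Y^\tau(e^{\pm\alpha},z)=W^\tau(e^{\pm\alpha},z)$ and the operator $\Delta_z$ plays no role on the generators. I would then compute the adjoint of $W^\tau(e^\alpha,z^{-1})=p^{-\langle\alpha|\alpha\rangle/2}\sigma(\alpha)E^-(-\alpha,z^{-1})E^+(-\alpha,z^{-1})e^\alpha\,z^{\langle\alpha|\alpha\rangle/2}$ termwise: by the mode adjoints the factors $E^-(-\alpha,z^{-1})$ and $E^+(-\alpha,z^{-1})$ move to the second argument as $E^+(\alpha,z)$ and $E^-(\alpha,z)$ (the formal variable is inverted, which is exactly matched by the substitution $z\mapsto z^{-1}$ built into \eqref{Eq:Inv}); the vector operator $e^\alpha$ moves across by Lemma \ref{ealpha} together with the lemma $(e^\alpha\cdot u,v)=(u,\mu e^{-\alpha}\cdot v)$, which contributes the factor $\mu$; and since $e^{-\alpha}$ commutes with $E^{\pm}(\alpha,z)$ the ordering is the one occurring in $W^\tau(e^{-\alpha},z)$. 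Accounting for the anti-linearity of the form in the second argument, which conjugates the scalar coefficients (turning the right-hand scalar $\sigma(-\alpha)=\sigma(\alpha)$ into its conjugate $\overline{\sigma(\alpha)}$ and the left-hand factor $\mu$ into $\bar\mu$), the identity collapses to the single scalar equation $\overline{\sigma(\alpha)}\,\mu=(-1)^{\langle\alpha|\alpha\rangle/2}\sigma(\alpha)$, which is precisely the preceding lemma. The remaining real factor $p^{-\langle\alpha|\alpha\rangle/2}$ and the powers of $z$ agree, completing \eqref{Eq:Inv}.

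I expect the main obstacle to be the scalar bookkeeping in this last case: correctly tracking the conjugation coming from anti-linearity in the second argument (which produces $\overline{\sigma(\alpha)}$ and $\bar\mu$), the inversion of the formal variable from the $z^{-1}$ in \eqref{Eq:Inv}, the sign $(-1)^{\langle\alpha|\alpha\rangle/2}$ coming from $(-z^{-2})^{L(0)}$, and the cocycle twist relating $\hat L$ and $\hat L_\tau$ through \eqref{Rem:Setid}, which is hidden inside $\mu$. The three lemmas just established are tailored to isolate exactly these scalars, and the crux of the argument is to assemble them so that all phases cancel into the identity $\overline{\sigma(\alpha)}\mu=(-1)^{\langle\alpha|\alpha\rangle/2}\sigma(\alpha)$, exactly as in \cite[Lemma 5.6]{CLS}.
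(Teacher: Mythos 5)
Your proposal is correct and takes essentially the same approach as the paper's proof: reduce the invariance property \eqref{Eq:Inv} to the generators $\alpha(-1)\cdot\1$ and $e^\alpha$, handle the Heisenberg case via the adjoint relation $(\alpha(n)u,v)=(u,\alpha(-n)v)$, and handle the lattice case by moving $E^{\pm}$ and the group element $e^\alpha$ across the form so that everything collapses to the scalar identity $\overline{\sigma(\alpha)}\,\mu=(-1)^{\langle\alpha|\alpha\rangle/2}\sigma(\alpha)$. The only cosmetic differences are that you justify the reduction to generators by closure under products (in the style of \cite{FHL}) where the paper cites \cite[Proposition 2.12]{Dlin}, and that you make the observation $e^{\Delta_z}e^\alpha=e^\alpha$ explicit, which the paper uses implicitly.
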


\begin{proof}
We only need to verify the invariant property. 
Since the VOA $V_L$ is generated by $\{\alpha(-1)\cdot \1\mid \alpha\in L\} \cup \{e^\alpha \mid \alpha\in L\}$, 
it is sufficient to check
\[
(Y^\tau (e^{zL(1)} (-z^{-2} )^{L(0)} x, z^{-1})u, v) = (u, Y^\tau (\phi(x), z)v)
\]
for $x\in \{\alpha(-1)\cdot \1\mid \alpha\in L\} \cup \{e^\alpha \mid \alpha\in L\}$ and  $u,v \in V_L^{\chi}(g)$ (cf.\ \cite[Proposition 2.12]{Dlin}).

Let  $u = v_1\otimes t(a)$ and $v = v_2 \otimes t(b)$ for some $v_1 , v_2 \in S[\tau]$, $a, b \in e^{-\beta}\hat{L}_\tau$. Then
\[
(\alpha(n) u, v) = (u, \alpha(-n)v)
\]
for any $\alpha\in L$ and $n \in \frac{1}p\Z$. Thus for $x = \alpha(-1)\cdot \1$, we have
\[
\begin{split}
& (Y^\tau (e^{zL(1)} (-z^{-2} )^{ L(0)}\alpha (-1)\cdot \1 , z^{-1}) u, v)\\
= &-z^{-2} (Y^\tau(\alpha(-1)\cdot \1, z^{-1}) v_1 \otimes t(a), v_2\otimes t(b))\\
=& -z^{-2}\sum_{n\in \frac{1}p\Z} (\alpha(n)v_1 , v_2 )(t(a), t(b))z^{n+1}\\
= & -\sum_{n\in \frac{1}p\Z} (v_1 , \alpha(-n)v_2 )(t(a), t(b)) z^{n-1}\\
= &(u, Y^\tau (\phi(\alpha(-1)\cdot \1), z)v).
\end{split}
\]
Notice that $e^{\Delta_z}(\alpha(-1)\cdot \1) = \alpha(-1)\cdot \1$.

Now take $x = e^\alpha$ with $\langle \alpha| \alpha\rangle = 2k$. Then we have
\[
\begin{split}
& (Y^{\tau} (e^{ zL(1)} (-z^{-2} )^{L(0)} e^\alpha , z^{-1} )u, v)\\
= &(Y^{\tau}(e^{ zL(1)} (-z^{-2} )^{L(0)} e^\alpha , z^{-1} )v_1 \otimes t(a), v_2\otimes t(b))\\
= &(-z^{-2} )^k (p^{-k} \sigma(\alpha) E^-(-\alpha,z^{-1})E^{+}(-\alpha,z^{-1}) e^\alpha z^{k} v_1 \otimes t(a), v_2\otimes t(b))\\
= & (-z^{-2} )^k  (  v_1 \otimes t(a), p^{-k} \overline{ \sigma(\alpha)}E^-(\alpha,z)E^{+}(\alpha,z) \mu e^{-\alpha} z^{k}v_2\otimes t(b))\\
= & (  v_1 \otimes t(a), p^{-k} \sigma(\alpha) E^-(\alpha,z)E^{+}(\alpha,z) e^{-\alpha} z^{-k}v_2\otimes t(b))\\
=& (  v_1 \otimes t(a), Y^\tau(\phi(e^\alpha), z) v_2\otimes t(b)) 
\end{split}
\]
as desired.  
\end{proof}

\section{Holomorphic VOAs of central charge $24$}
In this section, we review a construction of holomorphic vertex operator algebras of central charge $24$ using certain simple current extensions of  lattice vertex operator algebras and some orbifold vertex operator subalgebras in the Leech lattice vertex operator algebra  \cite{Ho2,La19,BLS}.

Assume that $V_1$ is semisimple and let $\mathfrak{h}$ be a Cartan subalgebra of $V_1$. Let $M(\mathfrak{h})$ be the subVOA generated by $\mathfrak{h}$ and denote 
\[
W=\mathrm{Comm}_V(M(\mathfrak{h})) \quad \text{ and } X =\mathrm{Comm}_V(W). 
\]
Then $X$ is isomorphic to a lattice VOA $V_L$ and  $W$ is isomorphic to an orbifold VOA
$V_{\Lambda_\tau}^{\hat{\tau}}$, where  $\Lambda_\tau$ is the coinvariant sublattice of the Leech lattice $\Lambda$ associated with an isometry $\tau\in O(\Lambda)$  (see \cite{Ho2} and \cite{La19}).  The possible isometry $\tau\in O(\Lambda)$ has been described in \cite{Ho2}. 
It is also proved in \cite{La19} that all irreducible modules for the fixed point subVOA 
$V_{\Lambda_\tau}^{\hat{\tau}}$ are simple current modules. Therefore, the VOA $V$ can be viewed as a simple current extension of $V_{L}\otimes V_{\Lambda_\tau}^{\hat{\tau}}$.

\subsection{Automorphism groups and $\mathrm{Stab}_{\Aut(V)}(V_{L}\otimes V_{\Lambda_\tau}^{\hat{\tau}})$}

Next we describe the subgroup $\mathrm{Stab}_{\Aut(V)}(V_{L}\otimes V_{\Lambda_\tau}^{\hat{\tau}})$ for each case  by using the methods in \cite{Sh07}. Note that the automorphism groups for all holomorphic VOAs of central charge $24$  with $V_1\neq 0$ and $\mathrm{Stab}_{\Aut(V)}(V_{L}\otimes V_{\Lambda_\tau}^{\hat{\tau}})$ have already  been computed in \cite{BLS}. 

\begin{nota}\label{gconjandMuU}
For any VOA $U$, $\Aut(U)$ acts on $\mathrm{Irr}(U)$ by module conjugations: 
for a $V$-module $\left(M,Y_{M}\right)$ and $g\in \Aut(V)$, the $g$-conjugate module 
$\left(g\circ M,Y_{g\circ M}\right)$ of $(M, Y_M)$  is defined  by $g\circ M = M$
as a vector space and $Y_{g \circ M}\left(v,z\right)=Y_{M}\left(g^{-1}v,z\right)$
for $v\in V$. This action preserves the conformal weights. Thus, we have a canonical group homomorphism 
\begin{equation}
	\mu_U:\Aut(U)\to O(\irr(U),q_U),\label{Eq:Orthogonal}
\end{equation}
where $O(\irr(U),q_U)=\{h\in\Aut(\irr(U))\mid q_U(M)=q_U(h(M))\ \text{for all}\ M\in\irr(U)\}$  is the orthogonal group of the quadratic space $(\irr(U),q_U)$.  
We use $\overline{\Aut}(U)$ and  $\Aut_0(U)$ to denote $\Imm\mu_{U}$ and $\Ker\mu_U$, respectively. 
\end{nota}

Recall that the irreducible modules for the lattice VOA $V_L$ are parametrized by its discriminant group $\mathcal{D}(L)=L^*/L$ and the fusion rules are given by 
$V_{\lambda+L} \times V_{\eta+L}=V_{\lambda+\eta+L}$ for $\lambda, \eta\in L^*$. 
Since $O(L)$ acts naturally on $\mathcal{D}(L)$, we also have a canonical group homomorphism  $\mu_L:O(L)\to O(\mathcal{D}(L),q_L)$. We use $\overline{O}(L)$ to denote the image of $\mu_L$ on $O(\mathcal{D}(L),q_L)$. 
 
Set $W=V_{\Lambda_\tau}^{\hat{\tau}}$ and   let $\varphi$ be a bijection from $\mathcal{D}(L)$ to $\irr(W)$ such that 
 \begin{equation*}
 	V\cong\bigoplus_{\lambda+L\in\mathcal{D}(L)}V_{\lambda+L}\otimes \varphi({\lambda+L})\label{Eq:V}.
 \end{equation*}
For simplicity, we often denote $\varphi({\lambda+L})$ by $W_\lambda$.

Set $S_\varphi=\{(V_{\lambda+L},\varphi(\lambda+L))\mid   \lambda+L\in\mathcal{D}({L})\}\subset\irr(V_L)\times\irr(W)$.  
The dual group $S_\varphi^*={\rm Hom}(S_\varphi,\C^\times)$ acts faithfully on $V$ with the action given by  
 \begin{equation}
 	S_\varphi^*=\{\exp(2\pi\sqrt{-1}v_{(0)})\mid v+L\in \mathcal{D}(L)\}.\label{Eq:S*}
 \end{equation}
 By \cite[Theorem 3.3]{Sh07}, we know that 
 \begin{align}
 	S_\varphi^*&=\{\sigma\in\Aut(V)\mid \sigma=id\ {\rm on}\ V_L\otimes W\}\label{Eq:ns0}.
 \end{align}
and 
\begin{align}
	N_{\Aut(V)}(S_\varphi^*)/S_\varphi^*&\cong \Stab_{\Aut(V_{L}\otimes W)}(S_\varphi)=\{\sigma\in \Aut(V_{L}\otimes W)\mid \sigma\circ S_\varphi  =S_\varphi\}.\label{Eq:ns}
\end{align}
Note that  
\begin{align}
	N_{\Aut(V)}(S_\varphi^*)=\{\sigma\in\Aut(V)\mid \sigma(V_{L}\otimes W)=V_{L}\otimes W\}= 	\Stab_{\Aut(V)}(V_{L}\otimes W).\label{Eq:NS*}
\end{align}
Set ${\rm Stab}_{\Aut(V)}(\h)=\{\sigma\in\Aut(V)\mid \sigma(\h)=\h\}$ and ${\rm Stab}_{\Inn(V)}(\h)={\rm Stab}_{\Aut(V)}(\h)\cap\Inn(V), $
where $\h$ is the chosen Cartan subalgebra of $V_1$.  By \cite[Lemma 3.14]{BLS}, 
\[
\Aut(V) = \Inn(V){\rm Stab}_{\Aut(V)}(\h) \quad \text{ and } \quad N_{\Aut(V)}(S^*_\varphi)=\Inn(V_L){\rm Stab}_{\Aut(V)}(\h). 
\]
Moreover,  $\Stab_{\Aut(V)}(\h)/S_\varphi^* \cong \Stab_{\Aut(V_{L}\otimes W)}(S_\varphi)\cap\Stab_{\Aut(V_{L}\otimes W)}(\h)$ \cite[Lemma 3.15]{BLS}. 

Recall from \cite[Theorem 3.4]{BLS} that $\mu_{W}$ is injective and $\overline{\Aut}(W)\cong \Aut(W)$.  Therefore, the kernel of the group homomorphism 
\[
\Aut(V_L\otimes W)\to  O(\irr(V_L),q_{V_L})\times O(\irr(W),-q_W),\quad \sigma\mapsto (\mu_{V_L}(\sigma_{|V_L}),{\mu}_W(\sigma_{|W}))
\]
is $\Aut_0({V_L})\times 1$.  It turns out that $\Stab_{\Aut(V_{L}\otimes W)}(S_\varphi)$ may be viewed as a subgroup of $\Aut(V_L)$ by considering the restriction of $\Stab_{\Aut(V_{L}\otimes W)}(S_\varphi)$ to $V_L$. We also have   
\begin{equation}
	\Stab_{\Aut(V_{L}\otimes W)}(S_\varphi)\cong \Aut_0({V_L}).(\overline{O}(L)\cap \varphi^{*}(\overline{\Aut}(W)))< \Aut(V_L),\label{Eq:StabS}
\end{equation}
where  $\varphi^*(\overline{\Aut}(W))=\varphi^{-1}(\overline{\Aut}(W))\varphi\subset O(\mathcal{D}(L),q_L)$ and  
\begin{align*}
	&\Stab_{\Aut(V_{L}\otimes W)}(S_\varphi)\cap\Stab_{\Aut(V_{L}\otimes W)}(\h)\\\cong& \{\exp(a_{(0)})\mid a\in \h\}\iota^{-1}(O_0(L).(\overline{O}({L})\cap \varphi^{*}(\overline{\Aut}(W)))) .
\end{align*}

Let $W(V_1)$ be the Weyl group of the semisimple Lie algebra $V_1$. Since $V_1$ is a semisimple, ${\rm Stab}_{\Inn(V)}(\h)$ acts on $\h$ as $W(V_1)$. 

\begin{lem}[{\cite[Lemma 3.16]{BLS}}]\label{Lem:4.1}
	\begin{enumerate}[{\rm (1)}]
		\item ${\rm Stab}_{\Inn(V)}(\h)/\{\exp(a_{(0)})\mid a\in \h\}\cong {W}(V_1)$.
		\item  ${\rm Stab}_{\Aut(V)}(\h)/\{\exp(a_{(0)})\mid a\in \h\}\cong \mu_L^{-1}( \bar{O}(L) \cap \varphi^{*}( \overline{\Aut}(W))$.
	\end{enumerate}
\end{lem}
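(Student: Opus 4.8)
The plan is to derive both isomorphisms from the structural results already recorded, reducing everything to bookkeeping around the Cartan torus $T_\h:=\{\exp(a_{(0)})\mid a\in\h\}$ together with two facts describing how $T_\h$ meets the neighbouring subgroups.

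For (1): as noted just before the statement, $\Stab_{\Inn(V)}(\h)$ acts on $\h$ as the full Weyl group $W(V_1)$, so only the kernel $K$ of the action map $\rho\colon\Stab_{\Inn(V)}(\h)\to GL(\h)$ must be computed, and I would show $K=T_\h$. Identifying $\Inn(V)$ with the connected reductive algebraic group $G$ whose Lie algebra is the semisimple Lie algebra $V_1$, the subgroup $T_\h$ is a maximal torus with $\mathrm{Lie}(T_\h)=\h$, and $\Stab_{\Inn(V)}(\h)$ is its normalizer $N_G(T_\h)$. Since $\h$ is abelian, $a_{(0)}$ acts on $\h$ as $\exp(\mathrm{ad}\,a)=\mathrm{id}$ for $a\in\h$, so $T_\h\subseteq K$; conversely $K=C_G(\h)=C_G(T_\h)=T_\h$, because the centralizer of a maximal torus in a connected reductive group is the torus itself. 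Hence $\rho$ induces $\Stab_{\Inn(V)}(\h)/T_\h\cong W(V_1)$.

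For (2): the idea is to transport the quotient by $S_\varphi^*$ controlled by \cite[Lemma 3.15]{BLS} to a quotient by $T_\h$. First, $S_\varphi^*\subseteq T_\h$, since by \eqref{Eq:S*} each element of $S_\varphi^*$ is $\exp(2\pi\sqrt{-1}\,v_{(0)})$ with $v\in L^*\subseteq\mathbb{Q}\otimes_\Z L\subseteq\h$. Put $G:=\Stab_{\Aut(V)}(\h)$ and
\[
R:=\mu_L^{-1}\!\left(\overline{O}(L)\cap\varphi^*(\overline{\Aut}(W))\right)=O_0(L).(\overline{O}(L)\cap\varphi^*(\overline{\Aut}(W)))<O(L),\qquad H:=\iota^{-1}(R)<O(\hat L).
\]
By \cite[Lemma 3.15]{BLS}, $G/S_\varphi^*\cong\Stab_{\Aut(V_L\otimes W)}(S_\varphi)\cap\Stab_{\Aut(V_L\otimes W)}(\h)$, and the displayed formula for $\Stab_{\Aut(V_L\otimes W)}(S_\varphi)\cap\Stab_{\Aut(V_L\otimes W)}(\h)$ identifies it, as a subgroup of $\Aut(V_L)$, with $T_\h\cdot H$. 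Under this isomorphism the image of $T_\h$ is the corresponding torus of $\Aut(V_L)$: the kernel of restriction to $V_L\otimes W$ is exactly $S_\varphi^*$ by \eqref{Eq:ns0}, and $a_{(0)}$ with $a\in\h$ acts trivially on $W=\mathrm{Comm}_V(M(\h))$. Noting that $T_\h$ is normal in $G$ (it stabilizes $\h$) and in $T_\h H$, the third and second isomorphism theorems give
\[
G/T_\h\cong(G/S_\varphi^*)\big/(T_\h/S_\varphi^*)\cong(T_\h H)/T_\h\cong H/(H\cap T_\h).
\]

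The remaining point is $H\cap T_\h=\ker\iota=\Hom(L,\Z/2\Z)$: every $f\in\Hom(L,\Z/2\Z)$ is realized by $\exp(2\pi\sqrt{-1}\,w_{(0)})$ for suitable $w\in\tfrac12 L^*\subseteq\h$, giving $\Hom(L,\Z/2\Z)\subseteq T_\h$, while any $\exp(2\pi\sqrt{-1}\,w_{(0)})$ lying in $O(\hat L)$ fixes every $L$-coset and hence has trivial $\iota$-image, i.e.\ lies in $\ker\iota$. Since $\ker\iota\subseteq H\subseteq O(\hat L)$, this gives $H\cap T_\h=\ker\iota$, whence $H/(H\cap T_\h)\cong\iota(H)=R$ and (2) follows. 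The main obstacle is not the isomorphism theorems but the two kernel computations: in (1), knowing that no inner automorphism outside $T_\h$ fixes $\h$ pointwise, i.e.\ that $C_G(T_\h)=T_\h$; and in (2), the identity $T_\h\cap O(\hat L)=\Hom(L,\Z/2\Z)$, which is precisely what lets the quotient by the continuous torus $T_\h$ absorb the order-two kernel of $\iota$ and return the discriminant-form group $\mu_L^{-1}(\overline{O}(L)\cap\varphi^*(\overline{\Aut}(W)))$ itself rather than its preimage $\iota^{-1}(R)$ in $O(\hat L)$.
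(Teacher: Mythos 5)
The paper itself offers no proof of this lemma: it is imported verbatim as \cite[Lemma 3.16]{BLS}, so there is no internal argument to compare against, and your proposal must be judged as a reconstruction from the facts the paper quotes from \cite{BLS} and \cite{Sh07}. As such it is essentially correct, and almost certainly close in spirit to the source. Part (1) is the standard Lie-theoretic argument (normalizer of a maximal torus modulo its centralizer), and part (2) correctly threads the quoted ingredients --- \eqref{Eq:S*}, \eqref{Eq:ns0}, \cite[Lemma 3.15]{BLS}, and the displayed identification of $\Stab_{\Aut(V_{L}\otimes W)}(S_\varphi)\cap\Stab_{\Aut(V_{L}\otimes W)}(\h)$ with $\{\exp(a_{(0)})\mid a\in\h\}\,\iota^{-1}(O_0(L).(\overline{O}(L)\cap \varphi^{*}(\overline{\Aut}(W))))$ --- through the second and third isomorphism theorems. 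The key observation that makes this work, and which you do make, is that all the isomorphisms involved are restriction maps, so that writing $T_\h=\{\exp(a_{(0)})\mid a\in\h\}$ and $\pi$ for the restriction to $V_L\otimes W$ followed by restriction to $V_L$, one has $\pi(T_\h)$ equal to the torus of $\Aut(V_L)$ and $T_\h=\pi^{-1}(\pi(T_\h))$ because $\ker\pi=S_\varphi^*\subseteq T_\h$.

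Two steps need tightening. In (1), the identification of $\Inn(V)$ with a connected semisimple group having Lie algebra $V_1$, and of $T_\h$ with a maximal torus, is asserted rather than proved; it holds because each graded piece $V_n$ is a finite-dimensional module over the semisimple Lie algebra $V_1$, so the action integrates to a connected group (cf.\ \cite{DG}), after which $C_G(T)=T$ and $N_G(T)/T\cong W$ apply. In (2), your justification of $H\cap T_\h=\ker\iota$ --- that a torus element lying in $O(\hat L)$ ``fixes every $L$-coset and hence has trivial $\iota$-image'' --- is ambiguous and, under the natural reading (it fixes every coset of $L$ in $L^*$, i.e.\ acts trivially on $\mathcal{D}(L)$), proves too little: that only places its $\iota$-image in $O_0(L)=\ker\mu_L$, which is in general a nontrivial group, so the quotient would not collapse to $\mu_L^{-1}(\overline{O}(L)\cap\varphi^*(\overline{\Aut}(W)))$ as claimed. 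The correct one-line argument: $\exp(2\pi\sqrt{-1}\,w_{(0)})$ acts as the identity on the subspace $\{h(-1)\1\mid h\in\h\}$ of $(V_L)_1$, whereas $g\in O(\hat L)$ acts on that subspace by $\iota(g)=\bar g$; hence any element of $T_\h\cap O(\hat L)$ has $\bar g=\mathrm{id}$ on $\h$, so $\bar g=\mathrm{id}$ on $L$ and the element lies in $\ker\iota=\Hom(L,\Z/2\Z)$. (Equivalently: such an element sends each $e^\alpha$ to a scalar multiple of itself, forcing $\bar g\alpha=\alpha$ for all $\alpha\in L$.) With these two repairs your reconstruction is complete.
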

Therefore, we may regard $W(V_1)$ as a subgroup of $O(L)$.

\subsection{Orbifold construction from Niemeier lattice VOAs}  \label{NieOrb}

It is known that all holomorphic VOA of central charge $24$ can be constructed from a single orbifold construction from a lattice VOA.  The constructions from Leech lattice VOA have been discussed in \cite{ELMS} (see also \cite{CLM}).  The constructions from Niemeier lattice VOAs are also discussed in \cite{HM}. In particular, the following has been proved. 

\begin{thm}[cf. {\cite[Proposition 5.7 and Remark 5.8]{HM}}]\label{ThmHM}
Let $V$ be a holomorphic VOA of central charge $24$ with $V_1\neq 0$. Then there exist a Niemeier $N$ and an automorphism $g=\hat{\tau}\exp(2\pi i \beta(0))\in Aut(V_N)$ such that   $V\cong \widetilde{V_N}(g)$. Moreover, 
\begin{enumerate}
\item $\tau$ has the same frame shape as one of the $11$ conjugacy classes of $Co_0$ as discussed  in \cite{Ho2} and $|g|=|\tau|$.  

\item $L\cong N^\tau_\beta$ and $V_{N_\tau}^{\hat{\tau}} \cong V_{\Lambda_\tau}^{\hat{\tau}}$, where $N^\tau_\beta= \{ x\in N^\tau\mid  \langle x, \beta\rangle \in \Z\}$; 
in particular, $V_N^g > V_{L}\otimes V_{\Lambda_\tau}^{\hat{\tau}}$. 

\item $(V_N^g)_1$ is non-abelian and has the same Lie rank as $V_1$. 

\end{enumerate}
  
\end{thm}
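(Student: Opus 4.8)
The plan is to transport the Leech-lattice orbifold realization of $V$, which is guaranteed by \cite{ELMS}, to a Niemeier lattice by a purely lattice-theoretic comparison, and then to pin down the resulting VOA using the fact (Schellekens' list \cite{Sch}) that a strongly regular holomorphic VOA of central charge $24$ is determined by its weight-one Lie algebra $V_1$. First I would recall from \cite{ELMS} that, since $V_1$ is semisimple, $V\cong\widetilde{V_\Lambda}(\tilde g)$ for some $\tilde g=\hat\tau\exp(2\pi i\,\tilde\beta(0))$ with $\tau\in O(\Lambda)$ lying in one of H\"ohn's $11$ conjugacy classes of $Co_0$ \cite{Ho2}, all of which have positive frame shape. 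Passing to commutants as in Section 4 then exhibits the subVOA $V\supset X\otimes W\cong V_L\otimes V_{\Lambda_\tau}^{\hat\tau}$, where $\Lambda_\tau$ is the coinvariant lattice of $\tau$.

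The heart of the argument is the lattice step. For each of the $11$ frame shapes I would produce a Niemeier lattice $N$ carrying an isometry $\tau$ of the same frame shape whose coinvariant sublattice satisfies $N_\tau\cong\Lambda_\tau$ as a lattice with isometry. Since the standard lifts are compatible, this gives $V_{N_\tau}^{\hat\tau}\cong V_{\Lambda_\tau}^{\hat\tau}=W$. The fixed lattice $N^\tau$ then contains a lattice of rank $\mathrm{rank}\,V_1$ with finite index, and I would choose $\beta\in\mathbb{Q}\otimes_\Z N^\tau$ with $p\langle\beta\,|\,N\rangle\subseteq\Z$ so that $L=N^\tau_\beta=\{x\in N^\tau\mid\langle x,\beta\rangle\in\Z\}$. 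Setting $g=\hat\tau\exp(2\pi i\,\beta(0))\in\Aut(V_N)$, one has $|g|=|\tau|$ and $V_N^g\supset V_L\otimes V_{N_\tau}^{\hat\tau}\cong V_L\otimes W$, which is exactly the subVOA sitting inside $V$. This already yields statements (1) and (2).

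It remains to identify the orbifold. Because $\tau$ has positive frame shape and $\beta$ satisfies the above congruence, $g$ is anomaly-free, so the cyclic orbifold theory underlying \cite{ELMS} produces a holomorphic VOA $\widetilde{V_N}(g)$ of central charge $24$. I would then compute $(\widetilde{V_N}(g))_1$ as the sum of the weight-one part of the untwisted fixed space $V_N^g$ and the $g$-invariant weight-one subspaces of the irreducible $g^i$-twisted $V_N$-modules $V_N^{\chi}(g^i)$ for $1\le i<|g|$, using the orbifold dimension formula; matching the resulting Lie algebra with $V_1$ and invoking uniqueness of the VOA structure given $V_1$ gives $\widetilde{V_N}(g)\cong V$. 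Finally, (3) follows because $(V_N^g)_1$ contains the Cartan subalgebra coming from $L$, forcing $\mathrm{rank}\,(V_N^g)_1=\mathrm{rank}\,V_1$, together with the nonzero $\tau$-fixed root vectors of $N$, which make it non-abelian.

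The hard part will be the lattice step combined with the weight-one bookkeeping: proving, frame shape by frame shape, that each Leech coinvariant lattice $\Lambda_\tau$ is realized inside a suitable Niemeier lattice with an isometry of the same frame shape, and that the twisted-sector contributions to $(\widetilde{V_N}(g))_1$ assemble into precisely the Lie algebra attached to $V$ in Schellekens' list. By contrast, I expect the anomaly-freeness and the holomorphicity of $\widetilde{V_N}(g)$ to be comparatively routine once the positive frame shape is in hand, so that the genuine labour resides in the explicit case analysis.
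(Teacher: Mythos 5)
First, a structural point: the paper does not actually prove Theorem \ref{ThmHM}. It is imported wholesale from \cite[Proposition 5.7 and Remark 5.8]{HM}, and the only material in the paper surrounding it is the explicit data $(N,\tau,\beta)$ recorded in Tables \ref{table3to8}--\ref{10F} for the cases $|g|>2$ (used later for Proposition \ref{prop4.4}). So there is no internal argument to compare against line by line. That said, your outline is consistent with the strategy of the cited source: produce a pair $(N,g)$ with $g=\hat\tau\exp(2\pi i\beta(0))$, run the cyclic orbifold construction, compute the weight-one Lie algebra of $\widetilde{V_N}(g)$ via the dimension formula, and then identify $\widetilde{V_N}(g)\cong V$ using the fact that for these VOAs the Lie algebra structure of $V_1$ determines $V$ up to isomorphism. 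The tables in Section 4.2 are precisely instances of the data your ``lattice step'' is supposed to produce.

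However, as a proof your proposal has genuine gaps, and they sit exactly where the content of the theorem is. Everything case-specific is deferred: exhibiting, for each of H\"ohn's $11$ frame shapes \cite{Ho2}, a Niemeier lattice $N$ with an isometry $\tau$ of that frame shape such that $N_\tau\cong\Lambda_\tau$ \emph{as lattices with isometry} (a nontrivial fact, cf.\ \cite{LM}); choosing $\beta$ so that $L\cong N^\tau_\beta$, $|\tau|\beta\in N$ (needed for $|g|=|\tau|$ --- note also that a standard lift $\hat\tau$ may have order $2|\tau|$, which must be ruled out), and the $g^i$-twisted modules have conformal weights in $\frac{1}{|g|}\Z$; and verifying that the twisted-sector weight-one contributions assemble into Schellekens' entry for $V$. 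You acknowledge this is ``the hard part,'' but without it you have a correct plan, not a proof. Two further points. Your reduction via \cite{ELMS} presupposes $V_1$ semisimple, whereas the statement covers all $V_1\neq 0$; the Leech lattice VOA (abelian $V_1$) is outside the scope of \cite{ELMS} and would need separate treatment. And in (3), non-abelianness of $(V_N^g)_1$ is not automatic from the existence of ``$\tau$-fixed root vectors of $N$'': one needs roots of $N^\tau$ lying in $L=N^\tau_\beta$ (i.e.\ pairing integrally with $\beta$), or $g$-fixed combinations of root-vector orbits, and likewise the rank statement hides the claim that the Cartan subalgebra of $V_1$ lies entirely in the untwisted sector, i.e.\ $\rank V_1=\rank N^\tau$; both are again case-by-case checks rather than formal consequences.
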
 

We note that the choices for $N$ and $g$ are not unique. It turns out that it is possible to choose $(N,g)$ so that $(V_N^g)_1$ contains a simple Lie ideal which is a proper Lie subalgebra of a simple ideal of $V_1$. 

\begin{prop}\label{prop4.4}
Let $V$ be a holomorphic VOA of central charge $24$ with $V_1\neq 0$ and $\rank V_1<24$. Then there exist a Niemeier $N$ and an automorphism $g=\hat{\tau}\exp(2\pi i \beta(0))\in \Aut(V_N)$ such that   $V\cong \widetilde{V_N}(g)$ and Conditions (1), (2), (3) in Theorem \ref{ThmHM} are satisfied. Moreover,  $(V_N^g)_1$ contains a simple Lie ideal which is a proper Lie subalgebra of a simple ideal of $V_1$. 

\end{prop}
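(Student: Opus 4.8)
The plan is to take the datum furnished by Theorem~\ref{ThmHM} as the starting point: a Niemeier lattice $N$ and an automorphism $g=\hat\tau\exp(2\pi i\beta(0))$ with $V\cong\widetilde{V_N}(g)$ satisfying conditions (1)--(3), and then to exploit both the hypothesis $\rank V_1<24$ and the non-uniqueness of $(N,g)$ noted just before the statement. Since $V_N^g$ is a subVOA of $V\cong\widetilde{V_N}(g)$, restriction to weight one gives an inclusion $(V_N^g)_1\subseteq V_1$, which by condition~(3) is rank-preserving. The whole problem thus reduces to producing a choice of $(N,g)$ for which some simple ideal of $(V_N^g)_1$ is a \emph{proper} subalgebra of a simple ideal of $V_1$.

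First I would record that $\tau\neq 1$. Indeed $\rank (V_N^g)_1=\rank N^\tau$, and this equals $\rank V_1<24=\rank N$, so $N^\tau\subsetneq N$ and $\tau$ is non-trivial, with frame shape one of the eleven non-identity classes of $Co_0$ from \cite{Ho2}. Hence $(V_N^g)_1$ is obtained from the rank-$24$, simply laced Niemeier Lie algebra $(V_N)_1$ by a genuine folding: $\hat\tau$ permutes the simple ideals of $(V_N)_1$ and acts through diagram automorphisms, while the inner twist $\exp(2\pi i\beta(0))$ further cuts down the $\tau$-fixed part. This is the structural source of the proper subalgebras I am after.

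Next I would establish strictness and extract a proper ideal. For the holomorphic VOAs under consideration $V$ is generated by $V_1$; were $(V_N^g)_1=V_1$, the subVOA generated by $V_1$ would lie in $V_N^g$, forcing $V=V_N^g$, which is impossible because $V_N^g$ is not holomorphic (the orbifold $\widetilde{V_N}(g)$ genuinely contains twisted sectors). Hence $(V_N^g)_1\subsetneq V_1$. As the inclusion carries a Cartan subalgebra of $(V_N^g)_1$ onto one of $V_1$ (equal rank), the root system $\Phi'$ of the semisimple part of $(V_N^g)_1$ is a sub-root-system of the root system $\Phi$ of $V_1$; mutual orthogonality of the simple components of $V_1$ then forces each simple ideal of $(V_N^g)_1$ to lie inside a single simple ideal of $V_1$. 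If, moreover, $(V_N^g)_1$ is semisimple of rank $\rank V_1$, then ``no proper simple ideal'' would mean every simple ideal of $(V_N^g)_1$ equals a full simple ideal of $V_1$; matching total rank would give $\Phi'=\Phi$, i.e.\ $(V_N^g)_1=V_1$, contradicting strictness. So a proper simple ideal must exist in this case.

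The main obstacle is precisely the semisimplicity hypothesis used in the last step: a priori $(V_N^g)_1$ is only reductive, and if an abelian summand were to account rank-wise for an entire simple ideal of $V_1$ produced purely from twisted sectors, the bookkeeping above would break and the conclusion could fail for that particular $(N,g)$. This is where the flexibility of Theorem~\ref{ThmHM} is decisive. I would use the explicit correspondence between the eleven admissible frame shapes, the Niemeier root systems, and the Lie algebras of Schellekens' list \cite{Sch} to select $N$ having a root-system component whose $\tau$-folding (refined by $\beta$) is a simple Lie algebra realized as a proper subalgebra of a simple ideal of $V_1$, and in particular so that $(V_N^g)_1$ is semisimple of full rank. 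Verifying that such a pair $(N,g)$ can be chosen for \emph{every} $V$ with $\rank V_1<24$ is the case-by-case heart of the argument, carried out from the data already assembled in \cite{HM,BLS} together with Schellekens' list.
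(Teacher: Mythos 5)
There is a genuine gap: the proposition has essentially no ``general nonsense'' proof, and its entire mathematical content is the explicit exhibition, for each Lie algebra in Schellekens' list of rank $<24$, of a concrete pair $(N,g)$ --- a Niemeier lattice, an isometry $\tau$ with one of the eleven admissible frame shapes, and a vector $\beta$ --- together with the computation of $(V_N^g)_1$ and the observation that one of its simple ideals sits properly inside a simple ideal of $V_1$ (e.g.\ $A_{1,3}\subsetneq A_{2,3}$, $A_{2,3}\subsetneq D_{4,3}$, $A_{6,3}\subsetneq A_{8,3}$, $E_{6,3}\subsetneq E_{7,3}$, $A_{3,7}\subsetneq A_{6,7}$). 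This is precisely what the paper does in the case analysis and Tables \ref{table3to8}--\ref{10F} following the statement, drawing on the data of \cite{HM,BLS}. Your proposal explicitly defers this verification (``the case-by-case heart of the argument'') rather than carrying it out, so what remains proved in your write-up is only peripheral to the actual claim.

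In addition, two of your supporting claims are incorrect or off-target. First, these $V$ are \emph{not} generated by $V_1$: the vertex subalgebra $\langle V_1\rangle$ generated by $V_1$ is the simple affine VOA attached to $V_1$, which for the central charge $24$ entries of Schellekens' list is a full conformal subVOA but a \emph{proper} one --- e.g.\ for $V_1\cong A_{2,3}^6$ one has $\langle V_1\rangle\cong L_{A_2}(3,0)^{\otimes 6}$ of central charge $24$ with $V\supsetneq \langle V_1\rangle$ --- so your strictness argument ($(V_N^g)_1=V_1$ would force $V=V_N^g$) breaks down as stated. Second, your reduction hinges on choosing $(N,g)$ so that $(V_N^g)_1$ is semisimple of full rank, but this runs against the actual constructions: in every entry of the paper's tables $(V_N^g)_1$ has abelian $U(1)$ factors (e.g.\ $A_{1,3}^6U(1)^6$, $A_{6,3}A_{2,1}^2U(1)^2$, $C_{2,10}U(1)^2$), and it is not clear that semisimple choices exist at all. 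The paper never needs semisimplicity: it simply reads off from each table entry a simple ideal of $(V_N^g)_1$ properly contained in a simple ideal of $V_1$, abelian factors notwithstanding, which is why the explicit lists constitute the proof.
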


Next we will describe $N$ and $g$ explicitly for the cases that $|g|>2$.

\subsubsection{$\Z_3$ orbifold construction from Niemeier lattice VOA}

First we consider the VOAs that can be obtained by a $\Z_3$ orbifold construction from Niemeier lattice VOA.   

Let $N$ be a Niemeier lattice. Then for any automorphism $g\in \Aut(V_N)$  of finite order, 
$g=\hat{\tau} \exp(2\pi i\beta(0))$ for some $\hat{\tau} \in O(\hat{N})$ and $\beta\in \mathbb{Q}\otimes_\Z N^\tau$ 
with $|g|\cdot \beta \in (N^\tau)^*$.

\medskip

\noindent \textbf{Case:} $V_1\cong A_{2,3}^6$.

In this case,  $V\cong \widetilde{V_N}(g)$, where $ N=N(A_1^{24})$, $\tau $ acts a permutation of the $24$ copies of $A_1$'s with the cycle shape $1^6 3^6$  and $\beta =\frac{1}{6}(0^{12}, \alpha^{12})$, where $\Z\alpha \cong A_1$, i.e, $\langle \alpha, \alpha\rangle =2$. In this case,  $(V_N^g)_1 \cong A_{1,3}^6U(1)^6$ and $U= \sqrt{3}L^*\cong \Span_\Z\{A_2^6, (111111)\}$. In particular, 
$O(L)= (W(A_2)\wr Sym_6). \Z_2$.  

\medskip

\noindent \textbf{Case:} $V_1\cong {A_{5,3}}{D_{4,3}}A_{1,1}^{3}$.

In this case,  $ N=N(A_5^4 D_4)$, $\tau $ acts on $A_5^4$ as a $3$-cycle and as a diagram automorphism $\varphi$ of order $3$ on $D_4$. The vector $\beta =\frac{1}3 (0,0,0, \lambda, u)$, where $\lambda= (1100-1-1) $ and $u=(1001)$. In this case,  $(V_N^g)_1 \cong A_{5,3}A_{2,3}A_{1,1}^3 U(1)^2$ and $U=\sqrt{3}L^*$ is an index $8$ overlattice of $A_5D_4(\sqrt{3}A_1)^3$.  Note $W(D_4)$ can be viewed as a subgroup of $\Stab_{\Aut(V)}(\h)$. 

\medskip

\noindent \textbf{Case:} $V_1\cong {A_{8,3}}A_{2,1}^{2}$.

In this case,  $ N=N(A_6^4)$, $\tau $ acts on $A_6^4$ as a $3$-cycle and $\beta =\frac{1}3 (0,0,0, (1^3, -1^3,0))$. In this case,  $(V_N^g)_1 \cong A_{6,3}A_{2,1}^2 U(1)^2 $ and $U=\sqrt{3}L^*$ is an index $9$ overlattice of $A_8(\sqrt{3}A_2)^2$.  

\medskip
\noindent \textbf{Case:} $V_1\cong  E_{6,3}{G_{2,1}}^3$.

In this case,  $ N=N(D_4^6)$, $\tau $ acts a diagram automorphism of order $3$ on each $D_4$ summand and $\beta =\frac{1}{3}(u, u, u, 0,0,0)$, where $u= (1,2,-1,0)\in D_4$. In this case,  $(V_N^g)_1 \cong A_{2,3}^3 G_{2,1}^3 $ and $U=\sqrt{3}L^*$ is an index $3^3$ overlattice of $E_6(\sqrt{3}A_2)^3$.  Note that $W(E_6)$ can be viewed as a subgroup of $\Stab_{\Aut(V)}(\h)$.

\medskip
\noindent \textbf{Case:} $V_1\cong {D_{7,3}}{A_{3,1}}{G_{2,1}}$.

In this case,  $N=N(D_4^6)$, $\tau $ acts as a 3-cycle on three copies of $D_4$ and as a diagram automorphism of order $3$ on 2 copies of $D_4$; $\beta =\frac{1}3(0^4,u, (1111) )$, where $u= (1,2,-1,0)\in D_4$. In this case,  $(V_N^g)_1 \cong D_{4,3} G_{2,1} A_{2,3} A_{3,1} U(1)$ and $U=\sqrt{3}L^*$ is an index $12$ overlattice of $D_7\sqrt{3}A_3\sqrt{3}A_2$. 

\medskip
\noindent \textbf{Case:} $V_1\cong E_{7,3}A_{5,1}$.

In this case,  $N=N(E_6^4)$, $\tau $ acts as a 3-cycle on three copies of $E_6$ and $\beta =\frac{1}{3}(\Lambda_1+\Lambda_2)$, where $\Lambda_1,\Lambda_2$ are fundamental weights. In this case,  $(V_N^g)_1 \cong E_{6,3}A_{5,1}U(1)$ and $U=\sqrt{3}L^*$ is an index $6$ overlattice of $E_7\sqrt{3}A_5$.

{\scriptsize
	\begin{longtable}[c]{|c|c|c|c|c|c|} 
		\caption{Orbifold construction associated with $3B$} \label{table3to8} \\
		\hline 
		No. &$\g=V_1$ & Niemeier lattice  $N$ & $\tau$ & $\beta$  & $(V_N^g)_1$ \\ \hline 
		\hline 
		$6$&$A_{2,3}^{6}$& $N(A_1^{24})$ & $1^63^6$ & $\frac{1}{6}(\alpha^{12},0^{12})$ & $A_{1,3}^6 U(1)^6$\\ 
		$17$&${A_{5,3}}{D_{4,3}}A_{1,1}^{3}$& $N(A_5^4 D_4)$ & 3-cycle$\times \varphi$ & $1/3(0,0,0, \lambda, u)$ & $A_{5,3}A_{2,3} A_{1,1}^3 U(1)^2$
		\\ 
		$27$&$A_{8,3}A_{2,1}^2$&$N(A_6^4)$ &  3-cycle   & $\frac{1}3(0,0,0,(1^3,-1^3,0) )$&$A_{6,3}A_{2,1}^2 U(1)^2$ \\ 
		$32$&$ E_{6,3}{G_{2,1}}^3$&$N(D_4^6)$ & $\varphi^{\otimes 6}$ & $\frac{1}{3}(u, u, u, 0,0,0)$   & $A_{2,3}^3 G_{2,1}^3 $ \\
		$34$&$  {D_{7,3}}{A_{3,1}}{G_{2,1}}$& $N(D_4^6)$  &  3-cycle$\times \varphi^2$ & $\frac{1}3(0^4,u, (1111) )$& $D_{4,3} G_{2,1} A_{2,3} A_{3,1} U(1)$
		\\  
		$45$&$E_{7,3}A_{5,1}$ &$N(E_6^4)$& 3-cycle & $\frac{1}{3}(\Lambda_1+\Lambda_2)$ & $E_{6,3}A_{5,1}U(1)$ \\ \hline
	\end{longtable}
}

\subsubsection{$\Z_5$ orbifold construction from Niemeier lattice VOA}
Next we consider the VOAs that can be obtained by a $\Z_5$ orbifold construction from Niemeier lattice VOA.

\medskip

\noindent \textbf{Case:} $V_1\cong A_{4,5}^2$.

In this case,  $V\cong \widetilde{V_N}(g)$, where $ N=N(A_1^{24})$, $\tau $ acts a permutation of the $24$ copies of $A_1$'s with the cycle shape $1^4 5^4$  and $\beta =\frac{1}{6}(0^{20}, \alpha^2, (2\alpha)^2)$, where $\Z\alpha \cong A_1$. In this case,  $(V_N(A_1^{24})^g)_1 \cong A_{1,5}^4 U(1)^4$ and $U= \sqrt{5}L^*\cong A_4^2$. 

\medskip

\noindent \textbf{Case:} $V_1\cong D_{6,5}A_{1,1}^2$.

In this case,  $V\cong \widetilde{V_N}(g)$, where $ N=N(A_1^{24})$, $\tau $ acts a permutation of the $24$ copies of $A_1$'s with the cycle shape $1^4 5^4$  and $\beta =\frac{1}5 (0^{22}, \alpha, 2\alpha )$.  In this case,  $(V_N(A_1^{24})^g)_1 \cong A_{1,5}^4 A_{1,1}^2U(1)^2$ and $U= \sqrt{5}L^*$ is an index $4$ overlattice of $ D_6 (\sqrt{5}A_1)^2$. 

\begin{longtable}[c]{|c|c|c|c|c|c|} 
	\caption{Orbifold construction associated with $5B$} \label{table5to4} \\
	\hline 
	No. &$\g=V_1$ & Niemeier lattice  $N$ & $\tau$ & $\beta$  & $(V_N^g)_1$ \\ \hline 
	\hline
	$9$&$ A_{4,5}^2$&$ N(A_1^{24})$ & $1^45^4$ & $\frac{1}{10}(0^{10}, \alpha^{12}, (2\alpha)^2)$ &  $A_{1,5}^4 U(1)^4$ \\
	$20$&$D_{6,5}A_{1,1}^2$&$N(A_1^{24})$ & $1^45^4$ & $\frac{1}5 (0^{22}, \alpha, 2\alpha)$& $A_{1,5}^4 A_{1,1}^2U(1)^2$ \\ \hline
\end{longtable}

\subsubsection{$\Z_7$ orbifold construction from Niemeier lattice VOA}
When $\tau$ has order $7$, there is only possible Lie algebra structure for $V_1$.

 \medskip
 
\noindent \textbf{Case:} $V_1\cong A_{6,7}$.   

\medskip 
In this case,  we choose $ N=N(A_3^8)$ and $\tau $ acts a $7$-cycle on the $8$ copies of $A_3$'s  and $\beta =\frac{1}7 (0^7, (3,-2,-1,0) )$.  In this case, $V\cong \widetilde{V_N}(g)$, $(V_N(A_1^{24})^g)_1 \cong A_{3,7}U(1)^3$ and $U= \sqrt{7}L^* \cong A_6$.  

\begin{longtable}[c]{|c|c|c|c|c|c|} 
	\caption{Orbifold construction associated with $7B$} \label{table7to5} \\
	\hline 
	No. &$\g=V_1$ & Niemeier lattice  $N$ & $\tau$ & $\beta$  & $(V_N^g)_1$ \\ \hline 
	\hline 
	$11$&$ A_{6,7}$&$N(A_3^8)$ & 7-cycle & $\frac{1}7(0^7,(3,-2,-1,0))$ & $A_{3,7} U(1)^3$  \\ \hline
\end{longtable}

\subsubsection{Remaining cases}

For the remaining case, the order of $\tau$ is not a prime.  

First we consider the cases where $\tau$ has the same frame shape as a $4C$ element in $Co_0$.

\noindent \textbf{Case:} $V_1\cong C_{7,2} A_{3,1}$.
In this case,  we choose $ N=N(A_9^2D_6)$. $\tau $ acts on $A_9^2$ as a transposition and acts as a diagram automorphism on $D_6$. The vector $\beta$ is given by  $\beta =( 0, 0, \frac{1}2(211110) )$.  Moreover,  $(V_N^g)_1 \cong A_{3,1}C_{5,2}A_{1,2} U(1)$.

\medskip
\noindent \textbf{Case:} $V_1\cong E_{6,4} A_{2,1}B_{2,1}$.

 In this case,  we choose $ N=N(A_9^2D_6)$. $\tau $ acts on $A_9^2$ as a transposition and acts as a diagram automorphism on $D_6$. The vector $\beta$ is given by  $\beta =\frac{1}8 (\,(1^5, -1^5),(1^5, -1^5), (22200) )$.  Moreover,  $(V_N^g)_1 \cong D_{5,4}A_{2,1} B_{2,1}U(1)$. 

\medskip
\noindent \textbf{Case:} $V_1\cong A_{7,4}A_{1,1}^3$.

 In this case,  we choose $ N=N(A_5^4D_4)$. $\tau $ acts as a product of two 2-cycles on $A_5^4$ times the diagram automorphism of $A_5$ on two copies of $A_5$ such that $\tau$ has order $4$. The vector $\beta$ is given by  $\beta =(\frac{1}8(1^3,-1^3)^4, \frac{1}4 (1100))$.  Moreover,  $(V_N^g)_1 \cong A_{1,1}^3A_{3,4}^2 U(1)$. 
 
 \medskip
\noindent \textbf{Case:} $V_1\cong A_{7,4}A_{1,1}^3$.

 In this case,  we choose $ N=N(A_5^4D_4)$. $\tau $ acts as a product of two 2-cycles on $A_5^4$ times the diagram automorphism of $A_5$ on two copies of $A_5$ such that $\tau$ has order $4$. The vector $\beta$ is given by  $\beta =\frac{1}8 ((1^3,-1^3)^2,0^2, (3311))$.  Moreover,  $(V_N^g)_1 \cong A_{1,1}^2C_{3,2}A_{3,4} U(1)^2$.

  \medskip
\noindent \textbf{Case:} $V_1\cong A_{3,4}^3A_{1,2}$.

 In this case,  we choose $ N=N(A_1^{24})$. $\tau $ acts as a permutation with the frame shape $1^42^24^4$. The vector $\beta$ is given by  $\beta =\frac{1}8 (1^8,2^2,0^{14})\alpha$, where $\langle \alpha| \alpha \rangle =2$.  Moreover,  $(V_N^g)_1 \cong A_{1,2}^2A_{1,4}^4 U(1)^5$.

{\scriptsize
	\begin{longtable}[c]{|c|c|c|c|c|c|c|} 
		\caption{Orbifold construction associated with $4C$} \label{4C} \\
		\hline 
		No. &$\g=V_1$ & Niemeier  $N$ & $\tau$ & $\beta$  & $(V_N^g)_1$ \\ \hline 	\hline
		$35$ &$C_{7,2}A_{3,1}$ &  $N(A_9^2 D_6)$& 2-cycle$\times (1,\delta_{A_9})\times \delta_{D_6}$ & $(0, 0, \frac{1}2(2, 1,1,1,1,0))$ & $A_{3,1}C_{5,2} A_{1,2}U(1)$\\ 
		$28$ &$E_{6,4}A_{2,1}B_{2,1}$ &  $N(A_9^2 D_6)$& 2-cycle$\times (1,\delta_{A_9})\times \delta_{D_6}$ & $\frac{1}8((1^5,-1^5),(1^5,-1^5))$ & $D_{5,4} A_{2,1}B_{2,1}U(1)$\\ 
		&  && &$+ \frac{1}4 (111000)$&\\ 
		$18$ &$A_{7,4}A_{1,1}^3$ &  $N(A_5^4 D_4)$& (2-cycle$\times (1,\delta_{A_5}))^2$ & $(\frac{1}8(1^3,-1^3)^4, \frac{1}4 (1100))$ & $A_{1,1}^3A_{3,4}^2 U(1)$\\ 
		$19$ &$D_{5,4}C_{3,2}A_{1,1}^2$ &  $N(A_5^4 D_4)$& (2-cycle$\times (1,\delta_{A_5}))^2$  & $\frac{1}8( (1^3,-1^3)^2,0^2, (3311))$ & $A_{1,1}^2 C_{3,2 }A_{3,4} U(1)^2$\\ 
		$7$ &$A_{3,4}^3A_{1,2}$&  $N(A_1^{24})$& $1^42^2 4^4$ & $\frac{1}8( 1^8, 2^2, 0^{14} ) \alpha$ & $A_{1,2} A_{1,4}^4U(1)^5$\\   		\hline
	\end{longtable}
}

\medskip

For isometries with the same frame shape as a $8E$ element, there is only possible case.

  \medskip
\noindent \textbf{Case:} $V_1\cong D_{5,8}^3A_{1,2}$.

 In this case,  we choose $ N=N(A_1^{24})$. $\tau $ acts as a permutation with the frame shape $1^22\,4 8^2$; that means $\tau$ has the same frame shape as a $8E$-element in $Co_0$. 
 The vector $\beta$ is given by  $\beta =\frac{1}{18} (3^3,1^5,0^{16})\alpha$, where $\langle \alpha| \alpha \rangle =2$.  Moreover,  $(V_N^g)_1 \cong A_{1,8}^2 U(1)^4$.  

 
 {\small
	\begin{longtable}[c]{|c|c|c|c|c|c|} 
		\caption{Orbifold construction associated with $8E$} \label{8E} \\
		\hline 
		No. &$\g=V_1$ & Niemeier  $N$ & $\tau$ & $\beta$  & $(V_N^g)_1$ \\ \hline 	\hline
		$10$ & $D_{5,8}A_{1,2}$  & $N(A_1^{24})$ &$1^2\cdot 2 \cdot 4\cdot 8^2$ &  $\frac{1}{16} (3^3,1^5, 0^{16}) \alpha $ & $A_{1,8}^2 U(1)^4$ \\\hline  
	\end{longtable} 
}

\medskip

For isometries with the same frame shape as a $6E$ element, there are two cases.

  \noindent \textbf{Case:} $V_1\cong A_{5,6}B_{2,3} A_{1,2}$.

 In this case,  we choose $ N=N(A_3^8)$ and $\tau $ acts as a product of two 3-cycles times the diagram automorphism of $A_3$ on all copies of $A_3$ on $A_3^8$. The vector $\beta$ is given by  $\beta =\frac{1}6 (\gamma_1^3, (2\gamma_1)^3,-\gamma_1,0)$, where $\gamma_1= \frac{1}4(3,-1,-1,-1)\in A_3^*$.  Moreover,  $(V_N^g)_1 \cong A_{1,2}^2A_{1,6}^2 B_{2,3} U(1)^3$.


\noindent \textbf{Case:} $V_1\cong C_{5,3}G_{2,2} A_{1,1}$.

 In this case,  we choose $ N=N(D_4^6)$. $\tau $ acts as a product of two  2-cycles times the diagram automorphism of $D_4$ on all copies of $D_4$ on $D_4^6$. The vector $\beta$ is given by  $\beta =\frac{1}6 ( (1100)^3, (1,2,-1,0),0,0) $.   Moreover,  $(V_N^g)_1 \cong A_{1,1} A_{1,3}^2 A_{1,6} G_{2,2} U(1)^2$.

{\tiny
	\begin{longtable}[c]{|c|c|c|c|c|c|} 
		\caption{Orbifold construction associated with $6E$} \label{6E} \\
		\hline 
		No. &$\g=V_1$ & Niemeier lattice  $N$ & $\tau$ & $\beta$  & $(V_N^g)_1$ \\ \hline 
		\hline
		$8$ & $A_{5,6}B_{2,3}A_{1,2}$  & $N(A_3^8)$ 
		&$\delta_{A_3}^8\times$3-cycle$^2$ 
		&$\frac{1}6 (\gamma_1^3,(2\gamma_1)^3, -\gamma_1,0) $& $A_{1,2} A_{1,6}^2 B_{2,3} U(1)^3$ \\
		$21$ & $C_{5,3}G_{2,2}A_{1,1}$ &  $N(D_4^6)$&
		$\delta_{D_4}^6\times$ 2-cycle$^2$ & $\frac{1}6((1100)^3, (12-10),0,0)$ & $A_{1,1} A_{1,3}^2 A_{1,6} G_{2,2} U(1)^2$\\ \hline
	\end{longtable}
}

  \medskip

For isometries with the same frame shape as a $6G$ element, there are also two cases.
\medskip

\noindent \textbf{Case:} $V_1\cong D_{4,12} A_{2,6}$.

 In this case,  we choose $ N=N(A_2^{12})$. $\tau $ acts as a product of a 3-cycle, a 6-cycle  times the diagram automorphism of $A_2$ on 6 copies of $A_2$ on $A_2^6$, on which the 6-cycle acts. The vector $\beta$ is given by  $\beta =\frac{1}6 (0^9, (10-1)^3)$.  Moreover,  $(V_N^g)_1 \cong A_{2,6} A_{1,12} $.  
 
\medskip
\noindent \textbf{Case:} $V_1\cong F_{4,6}A_{2,2}$.

 In this case,  we choose $ N=N(A_6^4)$. $\tau $ acts as a product of a 3-cycle times the diagram automorphism of $A_6$ on all 4 copies of $A_6$. The vector $\beta$ is given by  $\beta =\frac{1}6 (0^3,  (1^3,0, -1^3)) $.   Moreover,  $(V_N^g)_1 \cong B_{3,6} A_{2,2}U(1)$.

 {\scriptsize \begin{longtable}[c]{|c|c|c|c|c|c|} 
		\caption{Orbifold construction associated with $6G$} \label{6G} \\
		\hline 
		No. &$\g=V_1$ & Niemeier lattice  $N$ & $\tau$ & $\beta$  & $(V_N^g)_1$ \\ \hline 
		\hline
		$3$ &$D_{4,12}A_{2,6}$   &$N(A_2^{12})$& 3-cycle$\cdot$ 6-cycle$\times \delta_{A_2}^6$ & $\frac{1}{6}(0^9, (10-1)^3)$&  $A_{2,6}A_{1,12} U(1)^3$\\
		$14$ &$F_{4,6}A_{2,2}$   &$N(A_6^{4})$& 3-cycle$\times \delta_{A_6}^4$ & $\frac{1}6 (0^3,  (1^3,0, -1^3)) $ & $B_{3,6} A_{2,2}U(1)$  \\
		\hline 
	\end{longtable}
}

\medskip

For isometries with the frame shape of $10F$, there is only one possible Lie algebra.

\noindent \textbf{Case:} $V_1\cong F_{4,6}A_{2,2}$.

In this case,  we choose $ N=N(A_4^6)$ and $\tau $ acts as a product of a 5-cycle times the diagram automorphism of $A_4$ on all $6$ copies of $A_4$. The vector $\beta$ is given by  $\beta = \frac{1}{10}(0^5,  (2,1,0,-1,-2))$.   Moreover,  $(V_N^g)_1 \cong C_{2,10} U(1)^2$.  

{\small
	\begin{longtable}[c]{|c|c|c|c|c|c|} 
		\caption{Orbifold construction associated with $10F$} \label{10F} \\
		\hline 
		No. &$\g=V_1$ & Niemeier lattice  $N$ & $\tau$ & $\beta$  & $(V_N^g)_1$ \\ \hline 
		\hline
		$4$ &$C_{4,10}$  & $N(A_4^6)$ & 5-cycle$\times \delta_{A_4}^6$ & $\frac{1}{10}(0^5,  (2,1,0,-1,-2))$ &$C_{2,10} U(1)^2$\\
		\hline 
	\end{longtable}
}

\begin{rmk}\label{Invpsi}
For a root $\alpha$ of $V_1$, let $s_\alpha$ be the corresponding reflection in $W(V_1)$.  We use $\psi_\alpha$ to denote a lift of $s_\alpha$ in $\mathrm{Stab}_{\Aut(V)} (\h)$. Then $\psi_\alpha^2 = \exp(2\pi \sqrt{-1}\gamma_{(0)})$ for some $\gamma\in \h$. Up to conjugation by an element in $\{\exp(a_{(0)}\mid a\in \h\}$, we may assume $\gamma$ is fixed by $s_\alpha$ (cf. \cite[Lemma 4.5]{LSLeech}). Set $u= -\gamma/2$. 
Then $u$ is also fixed by $s_\alpha$ and  
\[
\begin{split}
(\exp(2\pi \sqrt{-1} u_{(0)}) \psi_\alpha)^2 =&   \exp(2\pi \sqrt{-1} u_{(0)})\psi_\alpha\exp(2\pi\sqrt{-1} u_{(0)})\psi_\alpha\\
 =&\exp(2\pi\sqrt{-1} u_{(0)})\psi_\alpha \exp(2\pi \sqrt{-1} u_{(0)})\psi_\alpha^{-1} 
\psi_\alpha^2\\
=& \exp(2\pi\sqrt{-1} u_{(0)}) \exp(2\pi\sqrt{-1} s_\alpha u_{(0)}) 
\exp(2\pi \sqrt{-1}\gamma_{(0)}) =1. 
\end{split}
\] 
Therefore, we may choose a lift such that $\psi_\alpha$ is an involution.  For our choices of $(N,g)$, there is always a root $\alpha$ such that $\psi_\alpha((V_N^g)_1) \neq (V_N^g)_1$. 
\end{rmk}

\section{Unitary form}\label{sec:4}

In this section, we will study the unitary form for holomorphic VOAs of central charge $24$. 
First we recall a theorem from \cite{Dlin}, which is about the unitary form for $\Z_2$ simple current extensions of unitary VOAs .  

\begin{thm}[{\cite[Theorem 3.3]{Dlin}}]\label{DLinZ2}
	Let $(V,\varphi)$ be a rational and $C_2$-cofinite unitary self-dual vertex operator algebra and $M$ a simple current irreducible $V$-module having integral weights. Assume that $M$ has an anti-linear map $\psi $ such  that $\psi(v_n w)=\varphi(v)_n \psi (w)$ and $\psi^2=id$, $(\psi(w_1),\psi(w_2))_M=(w_1,w_2)_M$ and the Hermitian form $(\ ,\ )_V$ on $V$ has the property that $(\varphi(v_1),\varphi(v_2))_V=(v_1,v_2)_V$. Then $(U,\varphi_U)$ has a unique unitary vertex operator algebra structure, where $\varphi_U:U\to U$ is the anti-linear involution defined by  $\varphi_U(v,w)=(\varphi(v),\psi(w))$, for $v\in V,w\in M$. Furthermore, $U$ is rational and $C_2$-cofinite.
\end{thm}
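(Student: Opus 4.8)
The plan is to realize $U$ as the $\Z_2$-graded simple current extension $U=V\oplus M$, to extend the anti-linear data and the Hermitian forms from $V$ and $M$ to $U$, and then to verify the two defining properties of a unitary VOA: that $\varphi_U$ is an anti-linear involution, and that the induced form is positive-definite and invariant. The VOA structure on $U$, together with its rationality and $C_2$-cofiniteness, I would take as known from the theory of simple current extensions (the integral-weight hypothesis guarantees that $U$ is an honest VOA rather than a super-VOA, and $M\boxtimes M\cong V$ ensures the $\Z_2$-grading closes). Thus no new argument is needed for the final assertion, and the genuinely new content is the compatibility of $(\varphi,\psi)$ and of the forms with the odd part of $Y_U$.

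First I would check that $\varphi_U=\varphi\oplus\psi$ is an anti-linear involution. Anti-linearity, $\varphi_U(\1)=\1$, $\varphi_U(\omega)=\omega$ and $\varphi_U^2=\mathrm{id}$ are immediate from the corresponding properties of $\varphi,\psi$ and from $\1,\omega\in V$. The homomorphism property $\varphi_U(a_nb)=\varphi_U(a)_n\varphi_U(b)$ splits according to the $\Z_2$-degrees of $a,b$. On $V\times V$ it is the hypothesis that $\varphi$ is an involution of $V$; on $V\times M$ it is the hypothesis $\psi(v_nw)=\varphi(v)_n\psi(w)$; and on $M\times V$ it reduces to the same statement once one uses the skew-symmetry $Y_U(a,z)b=e^{zL(-1)}Y_U(b,-z)a$ in $U$, noting that $L(-1)=\omega_{(0)}$ commutes with $\psi$ because $\varphi(\omega)=\omega$. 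The single genuinely new case is $M\times M\to V$, namely the identity $\varphi(m_nm')=\psi(m)_n\psi(m')$.

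To handle it I would use that $M$ is a simple current, so the space of intertwining operators of type $\binom{V}{M\,M}$ is one-dimensional and the operator $\mathcal{Y}$ encoding the product $M\times M\to V$ in $U$ is unique up to scalar. A direct check shows that $(m,m')\mapsto\varphi\big(\mathcal{Y}(\psi(m),z)\psi(m')\big)$ is again a \emph{linear} intertwining operator of the same type: one applies the anti-linear maps $\varphi$ and $\psi$ to the Jacobi identity for $\mathcal{Y}$ and uses that $\varphi$ and $\psi$ intertwine $Y_V$ and $Y_M$ respectively, together with the fact that all delta-function coefficients are real. Hence this operator equals $c\,\mathcal{Y}$ for a scalar $c$, so $\varphi(m_nm')=c\,\psi(m)_n\psi(m')$. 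Applying $\varphi$ once more and using $\varphi^2=\psi^2=\mathrm{id}$ forces $|c|=1$, and the remaining phase is pinned to $c=1$ by testing the identity on lowest-weight vectors: the top component of $\mathcal{Y}$ defines the invariant bilinear form on the simple module $M$, and comparing it against the positive-definite form through the hypotheses that $\varphi$ and $\psi$ preserve $(\cdot,\cdot)_V$ and $(\cdot,\cdot)_M$ (cf.\ Remark \ref{Rem:inv}) forces the conjugation to preserve the pairing. This scalar determination is the main obstacle of the proof, since the homogeneity of the Borcherds identity in the number of $\binom{V}{M\,M}$ factors means the phase can only be fixed through the forms, not through associativity alone.

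Finally I would equip $U$ with the orthogonal-sum Hermitian form $\big((v_1,w_1),(v_2,w_2)\big)_U=(v_1,v_2)_V+(w_1,w_2)_M$, which is positive-definite by construction. The invariance identity $(Y_U(e^{zL(1)}(-z^{-2})^{L(0)}a,z^{-1})x,y)_U=(x,Y_U(\varphi_U(a),z)y)_U$ again decomposes by degree: for $a\in V$ it is exactly the invariance of $(\cdot,\cdot)_V$ on $V$ and the unitary-module invariance \eqref{Eq:Inv} of $(\cdot,\cdot)_M$ on $M$, while for $a\in M$ it reduces, after skew-symmetry, to the $M\times M\to V$ compatibility already established, the form hypotheses being precisely what make the two sides agree. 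Uniqueness of the unitary structure then follows from Lemma \ref{uniqueF}: on the simple VOA $U$ a non-degenerate invariant sesquilinear form is unique up to scalar, and the normalization $(\1,\1)_U=1$ fixes it.
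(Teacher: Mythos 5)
First, a point of comparison: the paper does not prove Theorem \ref{DLinZ2} at all — it is imported verbatim from \cite[Theorem 3.3]{Dlin} — so there is no internal proof to measure you against. Judged on its own terms, your skeleton (case split by $\Z_2$-degree, skew-symmetry for the $M\times V$ case, one-dimensionality of the $\binom{V}{M\,M}$ intertwining space, and the $\varphi^2=\psi^2=\mathrm{id}$ argument giving $|c|=1$) is sound and is indeed how such proofs are organized. But there is a genuine gap exactly at the step you yourself identify as the main obstacle.

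The scalar $c$ cannot be ``pinned to $1$'' by the form hypotheses, because it is not an invariant of the data those hypotheses see. The vertex operator structures on the fixed space $V\oplus M$ extending $Y_V$ and $Y_M$ form a $\C^\times$-torsor: the $M\otimes V$ part is forced by skew-symmetry, the $M\otimes M$ part is an element of the one-dimensional space of intertwining operators of type $\binom{V}{M\,M}$, and rescaling $\mathcal{Y}\mapsto\mu\mathcal{Y}$ is implemented by the isomorphism $v+w\mapsto v+\lambda w$ with $\mu=\lambda^{-2}$. Under this rescaling $c\mapsto(\bar{\mu}/\mu)\,c$, while every hypothesis of the theorem — which involves only $V$, $M$, $Y_V$, $Y_M$, $\varphi$, $\psi$ and the two forms, never $\mathcal{Y}$ — is untouched. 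Hence no argument from the hypotheses can yield $c=1$ for a given structure; one must \emph{choose} the structure, which is precisely why the conclusion speaks of a unique unitary VOA \emph{structure}.

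Relatedly, your last paragraph claims that invariance of the orthogonal-sum form for odd $a$ ``reduces to the $M\times M\to V$ compatibility already established.'' It does not: for $a,x\in M$, $y\in V$ the identity \eqref{Eq:Inv} contains $\mathcal{Y}$ once on the left and not at all on the right, so it is an inhomogeneous constraint that fixes the scalar $s$ defined (via Lemma \ref{uniqueF}) by $\langle w_1,\psi(w_2)\rangle_U=s\,(w_1,w_2)_M$, where $\langle\cdot,\cdot\rangle_U$ is the normalized invariant bilinear form of $U$. The anti-automorphism property controls only the phase: computing $\overline{\langle\psi(w_1),\psi(w_2)\rangle_U}$ in two ways (once through the hypothesis on $\psi$, once through $c$) gives $c=\bar{s}/s$. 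The real content of the theorem is this compatibility: it shows that one rescaling of $\mathcal{Y}$ simultaneously achieves $c=1$ and $s\in\R$, after which a further real rescaling (which leaves $c$ fixed) gives $s=1$, i.e.\ invariance together with positive-definiteness; uniqueness of the structure then also follows from the fact that $s$ scales linearly in $\mathcal{Y}$. Without this computation the proof is incomplete. (A minor caveat: for consistency the hypothesis $(\psi(w_1),\psi(w_2))_M=(w_1,w_2)_M$ must be read as $(\psi(w_1),\psi(w_2))_M=\overline{(w_1,w_2)_M}$, as in Remark \ref{Rem:inv}; the literal reading contradicts sesquilinearity and would force the form to vanish.)
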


 By Theorem \ref{DLinZ2}, all holomorphic VOAs which can be constructed by a $\Z_2$-orbifold construction from a lattice VOA are unitary. As a consequences, we have the following result. 
 
\begin{thm}
	Let $V$ be a holomorphic VOA of central charge $24$ with the weight one Lie algebra isomorphic to one of the Lie algebras in Table \ref{T:LieZ2}. Then $V$ is unitary.   
	\end{thm}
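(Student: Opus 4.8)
The plan is to realize each such $V$ as a $\Z_2$ simple current extension of a unitary orbifold VOA and then invoke Theorem \ref{DLinZ2}. By the classification and orbifold results recalled above, the Lie algebras listed in Table \ref{T:LieZ2} are exactly those for which the holomorphic VOA $V$ of central charge $24$ with $V_1\cong\g$ admits a realization $V\cong\widetilde{V_L}(g)$ as a $\Z_2$-orbifold construction from a lattice VOA $V_L$, with $g=\hat{\tau}\exp(2\pi i\beta(0))$ of order $2$. Hence it suffices to show that any holomorphic VOA arising from such a $\Z_2$-orbifold construction is unitary.

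First I would check that the orbifold subVOA $V_L^g$ is unitary. By the two lemmas in the subsection on $\Aut_{(\ ,\ )}(V_L)$, both $\hat{\tau}\in O(\hat{L})$ and $\exp(2\pi i\beta(0))$ preserve the positive-definite Hermitian form of $(V_L,\phi)$, so $g\in\Aut_{(\ ,\ )}(V_L)$ and $g^{-1}\phi g=\phi$. Therefore $\phi$ restricts to an anti-linear involution of $V_L^g$ and $(V_L^g,\phi)$ is a unitary VOA; being a $\Z_2$-orbifold of a lattice VOA, it is moreover rational, $C_2$-cofinite and self-dual. Writing $V=\widetilde{V_L}(g)=V_L^g\oplus M$, the summand $M$ is the relevant eigenspace of the irreducible $g$-twisted module $V_L^\chi(g)$; since $V$ is holomorphic, $M$ is an irreducible simple current $V_L^g$-module of integral conformal weight.

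It remains to produce the anti-linear map $\psi$ on $M$ required by Theorem \ref{DLinZ2}. By Lemma \ref{twisted}, $V_L^\chi(g)$ is a unitary $g$-twisted $V_L$-module, so it carries a positive-definite invariant Hermitian form; restricting to the $V_L^g$-action and to the eigenspace $M$ and applying Lemma \ref{Rem:specify} yields an anti-linear bijection $M\to M'$ intertwining the module actions via $\phi$. Since $M$ is a self-dual simple current, composing with the identification $M'\cong M$ gives an anti-linear map $\psi:M\to M$ with $\psi(v_nw)=\phi(v)_n\psi(w)$ that preserves the form; after rescaling, using the uniqueness up to scalar from Lemma \ref{uniqueF}, one arranges $\psi^2=\mathrm{id}$. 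With the compatibility of $\phi$ with the form on $V_L^g$ recorded in Remark \ref{Rem:inv}, all hypotheses of Theorem \ref{DLinZ2} hold, and that theorem endows $V=V_L^g\oplus M$ with a unitary VOA structure. The main obstacle I anticipate is the precise normalization forcing $\psi^2=\mathrm{id}$ together with the sign bookkeeping ensuring the two Hermitian forms match on the extension; this is controlled by positive-definiteness of the twisted-module form and by the standardness of the lift $\hat{\tau}$.
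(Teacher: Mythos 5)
Your overall strategy coincides with the paper's: the paper's entire proof of this theorem is the observation that every $V$ in Table \ref{T:LieZ2} arises as a $\Z_2$-orbifold construction $\widetilde{V_L}(g)$ from a lattice VOA, followed by a citation of Theorem \ref{DLinZ2}; your verifications of the hypotheses (unitarity of $V_L^g$ via the lemmas on $\Aut_{(\ , \ )}(V_L)$, the unitary structure on the twisted sector via Lemma \ref{twisted}, and the anti-linear intertwiner via Lemma \ref{Rem:specify}) are exactly the steps the paper leaves implicit.

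However, there is a genuine gap at the step ``after rescaling, using the uniqueness up to scalar from Lemma \ref{uniqueF}, one arranges $\psi^2=\mathrm{id}$.'' Since $\psi^2$ is $\C$-linear and commutes with the $V_L^g$-action (because $\phi^2=\mathrm{id}$), irreducibility of $M$ gives $\psi^2=c\,\mathrm{id}$ for a scalar $c$, and anti-linearity of $\psi$ forces $c\in\R$ (compare $\psi\circ\psi^2$ with $\psi^2\circ\psi$ to get $\bar{c}=c$). But replacing $\psi$ by $\lambda\psi$ replaces $c$ by $|\lambda|^2c$, so rescaling can never change the \emph{sign} of $c$: your argument only yields $\psi^2=\pm\,\mathrm{id}$, and neither Lemma \ref{uniqueF} nor positive-definiteness of the twisted-module form excludes the quaternionic case $\psi^2=-\mathrm{id}$ (positive-definiteness controls $|c|$, not its sign). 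This sign is precisely the essential difficulty; note that the paper's general Theorem \ref{Thm:uni} imposes conditions (C) and (D) --- an auxiliary involution conjugating $f$ to $h$ and compatible with $\phi$ on $V^{0,0}$ --- exactly in order to force involutivity of the candidate anti-linear map, and Dong--Lin verify $\psi^2=\mathrm{id}$ for the moonshine module by an \emph{explicit} construction on the twisted module rather than by an abstract rescaling. To close your gap you need one extra input, for instance: (i) construct $\psi$ explicitly on $V_L^{\chi}(g)=S[\tau]\otimes e^{-\beta}\otimes U_\chi$ using the concrete model of Section 3, as Dong--Lin do; or (ii) use that $V=V_L^g\oplus M$ is a self-dual VOA, so its (automatically symmetric) invariant bilinear form restricts to a symmetric non-degenerate invariant self-pairing $B$ on $M$; choosing the identification $M'\cong M$ via $B$, the Hermitian form becomes $(u,v)_M=B(u,\psi(v))$, whence $(\psi u,\psi u)_M=c\,(u,u)_M$ by symmetry of $B$, and positive-definiteness forces $c>0$ --- only then does your rescaling normalize $\psi^2=\mathrm{id}$.
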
 
 
 \begin{longtable}{|c|c|c|}
 	\caption{Weight one Lie algebras of holomorphic VOAs of central charge $24$ associated with $\Z_2$ orbifolds}\label{T:LieZ2}
 	\\ \hline 
 	Class&  $\#$ of $V$& Weight one Lie algebra structures\\ \hline
 	$2A$& $17$&$A_{1,2}^{16}$, $A_{3,2}^4A_{1,1}^4$, $D_{4,2}^2B_{2,1}^4$, $A_{5,2}^2C_{2,1}A_{2,1}^2$, $D_{5,2}^2C_{2,1}A_{2,1}^2$, $A_{7,2}C_{3,1}^2A_{3,1}$,\\
 	&& $C_{4,1}^4$, $D_{6,2}C_{4,1}B_{3,1}^2$, $A_{9,2}A_{4,1}B_{3,1}$, $E_{6,2}C_{5,1}A_{5,1}$, $D_{8,2}B_{4,1}^2$, $C_{6,1}^2B_{4,1}$,\\
 	&& $D_{9,2}A_{7,1}$, $C_{8,1}F_{4,1}^2$, $E_{7,2}B_{5,1}F_{4,1}$, $C_{10,1}B_{6,1}$, $B_{8,1}E_{8,2}$\\
 	$2C$&$9$&$A_{1,4}^{12}$, $B_{2,2}^6$, $B_{3,2}^4$, $B_{4,2}^3$, $B_{6,2}^2$, $B_{12,2}$, $D_{4,4}A_{2,2}^4$, $C_{4,2}A_{4,2}^2$, $A_{8,2}F_{4,2}$\\
 	\hline 
 \end{longtable}

\subsection{Other orbifold constructions}\label{Sec:6}
Next we consider other orbifold constructions. 
The proof of the following theorem is essentially the same as \cite[Theorem 4.8]{CLS} with some necessary modifications.  For completeness, we include the proof here. 

\begin{thm}\label{Thm:uni} Let $V$ be a self-dual, simple VOA of CFT-type.
	Assume that $V$ has two commuting automorphisms $f$ and $h$ of order $p$.
	For $i,j\in\Z$, set $V^{i,j}=\{v\in V\mid f(v)=\xi^i v,\ h(v)=\xi^j v\}$, where $\xi=\exp(2\pi\sqrt{-1}/p)$.
	Set $V^i=\bigoplus_{j=0}^{p-1} V^{i,j}$.
	Assume the following:
	\begin{enumerate}[{\rm (A)}]
		\item There exists an anti-linear involution $\phi$ of $V^0$ such that $(V^0,\phi)$ is a unitary VOA;
		\item For $i\in\{1,\dots, p-1\}$, $V^i$ is a unitary $(V^0,\phi)$-module;
		\item There exists an automorphism $\psi\in \Aut(V)$ such that $\psi^{-1}f\psi=h$;
		
		\item $\psi(V^{0,0})= V^{0,0}$ and $\psi \phi\psi^{-1} =\phi $ on $V^{0,0}$;
	\end{enumerate}
	Then there exist an anti-linear involution $\Phi$ of $V$ such that $(V,\Phi)$ is a unitary VOA.
\end{thm}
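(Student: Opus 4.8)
The plan is to build a single positive-definite invariant Hermitian form on $V$ together with the anti-linear involution $\Phi$ that implements it, by assembling the given data on the graded pieces and then fixing the remaining scalar ambiguities with the help of $\psi$. Throughout I decompose $V=\bigoplus_{i=0}^{p-1}V^i$ into $f$-eigenspaces, so that $V^0=V^f$ is the fixed-point subVOA carrying the unitary structure $(\phi,(\cdot,\cdot))$ from (A). Since $V$ is simple, self-dual and of CFT-type, it has an invariant bilinear form $\langle\cdot,\cdot\rangle$, unique up to scalar, which I normalize by $\langle\1,\1\rangle=1$. Because $f$ fixes $\omega$ and $\1$ it preserves $\langle\cdot,\cdot\rangle$, so this form pairs $V^i$ nondegenerately with $V^{-i}=V^{p-i}$ and vanishes on $V^i\times V^j$ when $i+j\not\equiv 0$; in particular the contragredient $(V^i)'$ is identified with $V^{-i}$ as a $V^0$-module. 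The goal is an anti-linear involution $\Phi$ with $\Phi|_{V^0}=\phi$ and $\Phi(V^i)=V^{-i}$ satisfying $\Phi(a_nb)=\Phi(a)_n\Phi(b)$, $\Phi(\omega)=\omega$ and $\Phi(\1)=\1$; then $(u,v):=\langle u,\Phi(v)\rangle$ is a non-degenerate invariant sesquilinear form by Lemma \ref{Rem:specify}(2), which Remark \ref{Rem:inv} identifies with the Hermitian form to be shown positive-definite.

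First I would construct $\Phi$ piecewise. For each $i$, Condition (B) supplies a positive-definite invariant Hermitian form on the $V^0$-module $V^i$, and Lemma \ref{Rem:specify}(1) converts it into an anti-linear bijection $\Phi_i\colon V^i\to (V^i)'=V^{-i}$ with $\Phi_i(a_nw)=\phi(a)_n\Phi_i(w)$ for all $a\in V^0$. Setting $\Phi|_{V^0}=\phi$ and $\Phi|_{V^i}=\Phi_i$ yields an anti-linear bijection of $V$ satisfying the conjugate-homomorphism property whenever the first factor lies in $V^0$. Moreover the $V^i$ are simple current $V^0$-modules with one-dimensional fusion spaces $V^i\boxtimes V^j=V^{i+j}$, so Lemma \ref{uniqueF} pins down each $\Phi_i$ up to a scalar; I retain the rescaling freedom $\Phi_i\mapsto c_i\Phi_i$ (with $c_0=1$) to be fixed below.

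The crux is to promote the homomorphism property to arbitrary factors and to force $\Phi^2=\mathrm{id}$. For $a\in V^i$ and $b\in V^j$, both $a\otimes b\mapsto\Phi(a_nb)$ and $a\otimes b\mapsto\Phi(a)_n\Phi(b)$ are realized by intertwining operators of the one-dimensional type $\binom{V^{-i-j}}{V^{-i}\,V^{-j}}$, hence differ by a scalar $\epsilon_{i,j}$; associativity of vertex operators makes $(\epsilon_{i,j})$ a $\C^\times$-valued $2$-cocycle on $\Z/p\Z$, while the rescaling $\Phi_i\mapsto c_i\Phi_i$ alters it by the coboundary $c_{i+j}/(c_ic_j)$. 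Likewise $\Phi_{-i}\Phi_i$ is a $V^0$-module endomorphism of $V^i$, hence a scalar $\lambda_i$, and $\Phi^2=\mathrm{id}$ amounts to $c_{-i}\,\overline{c_i}\,\lambda_i=1$. The remaining task is to choose the $c_i$ so that simultaneously all $\epsilon_{i,j}=1$, all $c_{-i}\,\overline{c_i}\,\lambda_i=1$, and the $c_i$ keep the forms positive-definite. This is exactly where Conditions (C) and (D) enter: conjugating the locally defined $\Phi$ by the automorphism $\psi$ with $\psi^{-1}f\psi=h$ produces a second anti-linear conjugate-automorphism $\psi\Phi\psi^{-1}$ that is organized by the $h$-grading rather than the $f$-grading, and Condition (D)---that $\psi$ preserves $V^{0,0}=V^f\cap V^h$ and commutes with $\phi$ there---forces the two to agree on a generating overlap. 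Matching them rigidifies the scalars and yields $\epsilon_{i,j}=1$ together with a choice of $\lambda_i$ compatible with positive $c_i$, so that a valid normalization exists. I expect this consistency check---disentangling the interplay of the $f$- and $h$-gradings through $\psi$ so as to kill the cocycle $(\epsilon_{i,j})$ and the scalars $\lambda_i$ at once---to be the main obstacle, and it is the step that genuinely requires $\psi$ rather than only (A) and (B).

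Finally, with $\Phi$ an anti-linear involution satisfying the conjugate-homomorphism property and $\Phi(\omega)=\omega$, $\Phi(\1)=\1$, the form $(u,v)=\langle u,\Phi(v)\rangle$ is invariant by Lemma \ref{Rem:specify}(2). Distinct pieces are orthogonal, since $(V^i,V^j)=\langle V^i,V^{-j}\rangle=0$ for $i\neq j$, and on each $V^i$ the form coincides with the positive-definite form of (B); hence $(\cdot,\cdot)$ is positive-definite and $(V,\Phi)$ is a unitary VOA, as required.
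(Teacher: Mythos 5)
Your setup is sound and coincides with the paper's own skeleton: the $f$-eigenspace decomposition $V=\bigoplus_i V^i$, the use of Lemma \ref{Rem:specify} to convert the forms of (B) into anti-linear bijections $\Phi^i\colon V^i\to V^{p-i}$ over $\phi$, the identification $(V^i)'\cong V^{p-i}$ via the invariant bilinear form, and the block-orthogonal form $(u,v)=\langle u,\Phi(v)\rangle$ whose positivity reduces to positivity on each block. But the proof stops exactly at the crux. You introduce the scalars $\epsilon_{i,j}$, $\lambda_i$ and the normalizing constants $c_i$, and then assert that conditions (C) and (D) ``rigidify'' them via a matching of $\Phi$ with $\psi\Phi\psi^{-1}$ on ``a generating overlap,'' while conceding that this consistency check is the main obstacle. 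That unexecuted step \emph{is} the theorem: everything before it follows from (A), (B) and self-duality alone. Moreover, the bookkeeping as you set it up is not even well defined under the stated hypotheses: a single scalar $\epsilon_{i,j}$ exists only if the space of $V^0$-intertwining operators of type $\binom{V^{i+j}}{V^i,\;V^j}$ is one-dimensional, i.e.\ only if the $V^i$ are simple currents with group-like fusion. That is an assumption you import silently; the theorem assumes only that $V$ is self-dual, simple and of CFT-type. And even granting it, triviality of $H^2(\Z/p\Z,\C^\times)$ would not by itself produce constants $c_i$ that simultaneously kill the cocycle, satisfy $c_{p-i}\,\overline{c_i}\,\lambda_i=1$, and remain positive.

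What is missing is the concrete mechanism, and it lives on the finer $\Z_p^2$-grading $V^{i,j}$, which your argument never uses. The paper transports the unitary structure of $V^0$ through $\psi$ to the $h$-fixed subVOA $\bigoplus_i V^{i,0}$, and then normalizes the form of (B) on each $V^i$ so that on the slice $V^{i,0}=V^i\cap V^h$ it agrees with the transported form; this normalization is legitimate by Lemma \ref{uniqueF} applied to $V^{i,0}$ as an irreducible $(V^{0,0},\phi)$-module, and this is precisely where (D) enters (it guarantees both forms are invariant with respect to the \emph{same} anti-involution $\phi$ on $V^{0,0}$). The payoff is \eqref{Eq:comm}: $\Phi^i=\psi\phi\psi^{-1}$ on $V^{i,0}$. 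From this, $\Phi^{p-i}\circ\Phi^i$ is a $V^0$-module endomorphism of the irreducible module $V^i$ restricting to $(\psi\phi\psi^{-1})^2=\mathrm{id}$ on $V^{i,0}$, hence is the identity, so $\Phi^2=\mathrm{id}$ holds automatically rather than as a constraint to be solved; and multiplicativity $\Phi(u_nv)=\Phi(u)_n\Phi(v)$ is obtained not by a cocycle argument but from Borcherds' identity together with the spanning property $V^{i,j}=\Span_{\C}\{x_ry\mid x\in V^{0,j},\ y\in V^{i,0}\}$ of Remark \ref{Rem:3^2}, fed by the cases already secured (elements of $V^0$ in either slot, and elements of $\bigoplus_iV^{i,0}$, on which $\Phi$ acts as the anti-automorphism $\psi\phi\psi^{-1}$). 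Your ``generating overlap'' gestures at this, but in your single-grading formalism there is no overlap to match on: $V^i$ meets the $h$-fixed subVOA exactly in $V^{i,0}$, and recognizing these slices as the place where the normalization must be imposed is the missing idea.
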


\begin{rmk}\label{Rem:3^2}
	%
	Let $i,j,k,\ell\in\Z$.
	Then $\Span_\C\{ u_nv\mid u\in V^{i,j},\ v\in V^{k,\ell},\ n\in\Z\}=V^{i+k,j+\ell}$.
	Note also  that $V^{i,j}=V^{i+pk,j+p\ell}$ and $V=\bigoplus_{0\le i,j\le p-1}V^{i,j}$ is  $\Z_p^2$-graded.
	
\end{rmk}

Let $V$ be a VOA satisfying the assumptions of Theorem \ref{Thm:uni}. 
Let $(\cdot,\cdot)_{V^0}$ be the positive-definite invariant Hermitian form on $V^0$ normalized so that $(\1,\1)_{V^0}=1$.
Let $\langle\cdot,\cdot\rangle$ be the normalized symmetric invariant bilinear form on $V$ such that $\langle\1,\1\rangle=1$.
Note that $(u,v)_{V^0}=\langle u,\phi(v)\rangle$ for $u,v\in V^{0}$ (cf. Remark \ref{Rem:inv}).
By the assumption (C), $\psi(V^0)=V^{0,0}\oplus V^{1,0}\oplus \cdots \oplus V^{p-1,0}$ is also a unitary VOA with the anti-linear automorphism $\psi\phi \psi^{-1}$ and a positive-definite invariant Hermitian form defined by 
\begin{equation}
	(a,b)_{\psi(V^0)}=(\psi^{-1}(a),\psi^{-1}(b))_{V^0}\quad \text{for}\quad a,b\in \psi(V^0).\label{Eq:p()}
\end{equation}
Note that $\psi\phi\psi^{-1}=\phi$ on $V^{0,0}$ by Assumption (D).

By Lemma \ref{uniqueF}, a positive-definite invariant Hermitian form on the unitary $(V^{0,0},\phi)$-module $V^{i,0}$ is unique up to scalar for each $i=1, \dots, p-1$.
We may choose a positive-definite invariant Hermitian form $(\cdot,\cdot)_{V^i}$ on $V^i$ so that 
\begin{equation}
	(u,v)_{V^i}=(u,v)_{\psi(V^0)}\quad \text{for}\quad u,v\in V^{i,0}.\label{Eq:choice}
\end{equation}
By Lemma \ref{Rem:specify}, there exists an anti-linear bijective map $\Phi^i:V^i\to V^{p-i}$ such that $\Phi^i(a_nv)=\phi(a)_n\Phi^i(v)$ for $a\in V^0$ and $v\in V^i$ and 
\begin{equation}
	(u,v)_{V^i}=\langle u,\Phi^i(v)\rangle\quad \text{for}\quad u,v\in V^i.\label{Eq:()<>}
\end{equation}
By \eqref{Eq:p()}, \eqref{Eq:choice} and \eqref{Eq:()<>}, for  any $u,v\in V^{i,0}$, we have 
$$\langle u,\Phi^i(v)\rangle=(u,v)_{V^i}=(\psi^{-1}(u),\psi^{-1}(v))_{V^0}=\langle \psi^{-1}(u),\phi\psi^{-1}(v)\rangle=\langle u,\psi\phi\psi^{-1}(v)\rangle.$$
Hence
\begin{equation}
	\psi\phi\psi^{-1}=\Phi^i\quad \text{on}\quad V^{i,0}.\label{Eq:comm}
\end{equation}

Since the order of $\phi$ is $2$, both the composition maps $\Phi^{p-i}\circ\Phi^i$ and  $\Phi^{i}\circ\Phi^{p-i}$ are the identity map on $V^{i,0}$ by \eqref{Eq:comm}.
Viewing $V^{p-i}$ as an irreducible unitary $(V^0,\phi)$-module, we have $\Phi^{p-i}=(\Phi^i)^{-1}$ on $V^{p-i}$ by the same argument as in the proof of Lemma \ref{uniqueF}.

Now, we define the anti-linear map $\Phi:V\to V$ so that 
\begin{align*}
	\Phi(u) =\begin{cases} \phi(u)& \text{for}\ u\in V^0,\\
		\Phi^i(u)& \text{for}\ u\in V^i,\ i=1,\dots, p-1, 
	\end{cases}
\end{align*}
and the positive-definite Hermitian form $(\cdot,\cdot)$ on $V$ by
\begin{align*}
	(u,v)=\begin{cases}
		(u,v)_{V^i} & \text{ if } u,v\in V^i,\ i=0,1,\dots, p-1,\\
		0 & \text{ if } u\in V^i,\ v\in V^j,\ i\neq j.
	\end{cases}
\end{align*}
Clearly, $\Phi$ is bijective and $\Phi\circ \Phi$ is the identity of $V$.
We will show that $(V,\Phi)$ is unitary. 

\begin{lem}\label{Lem:<>()}
	\begin{enumerate}[{\rm (1)}]
		\item For $i,j\in\{0,1,\dots, p-1\}$, $\Phi(V^{i,j})=V^{p-i,p-j}$.
		\item For $u,v\in V$, $(u,v)=\langle u,\Phi(v)\rangle$.
	\end{enumerate}
\end{lem}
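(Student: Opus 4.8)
The plan is to show that \emph{both} the normalized invariant symmetric bilinear form $\langle\cdot,\cdot\rangle$ and the positive-definite Hermitian form $(\cdot,\cdot)$ are compatible with the $\Z_p^2$-grading $V=\bigoplus_{i,j}V^{i,j}$, and then read off the two assertions. I would prove (2) first, since it only needs the coarse $f$-grading. As $f$ and $h$ are automorphisms of $V$ fixing $\1$ and $\omega$, each preserves the (unique, normalized) invariant bilinear form; hence $\langle V^{i,j},V^{k,\ell}\rangle=0$ unless $i+k\equiv 0$ and $j+\ell\equiv 0 \pmod p$, and the block pairing $V^{i,j}\times V^{p-i,p-j}\to\C$ is nondegenerate. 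In particular $\langle V^i,V^k\rangle=0$ for $k\neq p-i$. Thus for $u\in V^i$, $v\in V^k$ with $i\neq k$ we have $(u,v)=0$ by definition, while $\langle u,\Phi(v)\rangle=0$ since $\Phi(v)=\Phi^k(v)\in V^{p-k}$ and $p-k\neq p-i$; for $u,v\in V^i$ we have $(u,v)=(u,v)_{V^i}=\langle u,\Phi^i(v)\rangle=\langle u,\Phi(v)\rangle$ by \eqref{Eq:()<>}. Extending by sesquilinearity across $V=\bigoplus_i V^i$ yields $(u,v)=\langle u,\Phi(v)\rangle$, which is (2).

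For (1) I would refine this to the full $h$-grading. Since $\Phi=\Phi^i$ maps $V^i$ bijectively onto $V^{p-i}=\bigoplus_m V^{p-i,m}$, certainly $\Phi(V^{i,j})\subseteq V^{p-i}$; the task is to pin the $h$-degree to $m=p-j$. The key step is that $h$ preserves each form $(\cdot,\cdot)_{V^i}$. Consider $w_1,w_2\mapsto (hw_1,hw_2)_{V^i}$ on the irreducible $(V^0,\phi)$-module $V^i$. Because $h$ is a VOA automorphism fixing $\omega$ (so it commutes with $e^{zL(1)}(-z^{-2})^{L(0)}$) and satisfies $h(a_nw)=(ha)_n(hw)$, substituting into the invariance identity \eqref{Eq:Inv} shows this form is again invariant provided $\phi(ha)=h\phi(a)$ for $a\in V^0$. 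Granting that, it is positive-definite and invariant, so by Lemma \ref{uniqueF} it equals $c\,(\cdot,\cdot)_{V^i}$ with $c>0$; iterating $h$ and using $h^p=1$ forces $c=1$. Hence $(hw_1,hw_2)_{V^i}=(w_1,w_2)_{V^i}$, and distinct $h$-eigenspaces $V^{i,j}$ are orthogonal for $(\cdot,\cdot)_{V^i}$.

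With this orthogonality, (1) follows from (2). Fix $v\in V^{i,j}$ and write $\Phi(v)=\sum_m(\Phi v)_m$ with $(\Phi v)_m\in V^{p-i,m}$. For any $\ell\neq j$ and $u\in V^{i,\ell}$ we get $0=(u,v)=\langle u,\Phi(v)\rangle=\langle u,(\Phi v)_{p-\ell}\rangle$, only the $V^{p-i,p-\ell}$-component surviving by the grading of $\langle\cdot,\cdot\rangle$; nondegeneracy of the block pairing gives $(\Phi v)_{p-\ell}=0$ for all $\ell\neq j$, so $\Phi(V^{i,j})\subseteq V^{p-i,p-j}$. Applying this to $(p-i,p-j)$ and using $\Phi\circ\Phi=\mathrm{id}$ gives the reverse inclusion, hence equality. (Alternatively one can argue directly from \eqref{Eq:comm}: using $\psi(V^{0,j})=V^{j,0}$ and $\phi(V^{0,j})=V^{0,p-j}$ one checks $\Phi^i(V^{i,0})=V^{p-i,0}$, then propagates the grading along $V^{i,j}=\mathrm{span}_\C\{a_nw\mid a\in V^{0,j},\,w\in V^{i,0}\}$ of Remark \ref{Rem:3^2} via the intertwining property of $\Phi^i$.)

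The single input both routes require, and the main obstacle, is the compatibility $\phi\,h=h\,\phi$ on $V^0$ (equivalently $h|_{V^0}\in\Aut_{(\cdot,\cdot)}(V^0)$, equivalently $\phi(V^{0,j})=V^{0,p-j}$). This is not forced by the abstract hypotheses (A)--(D) alone, but it holds in every case of interest: in the constructions of Section~4 the automorphisms are built on the lattice (or lattice-orbifold) subVOA from an isometry lift in $O(\widehat{L})$ together with an exponential $\exp(2\pi\sqrt{-1}\beta(0))$, and both types preserve the unitary form by the lemmas of Section~3. Consequently $h$ preserves the unitary structure of $V^0$, and the uniqueness-of-invariant-form argument above applies verbatim.
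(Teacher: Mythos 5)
Your proof of part (2) is correct and is actually more direct than the paper's: you obtain it straight from the $f$-invariance of the unique normalized bilinear form (so $\langle V^i,V^k\rangle=0$ unless $i+k\equiv 0 \pmod p$) together with the defining property $\Phi|_{V^i}=\Phi^i:V^i\to V^{p-i}$ and \eqref{Eq:()<>}, with no reference to part (1). The paper argues in the opposite order: it proves (1) first and then deduces (2) from the fact that the contragredient of $V^i$ is not isomorphic to $V^{p-j}$ when $i\neq j$.

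Part (1) is where there is a genuine gap. Your argument hinges on $h$ preserving each form $(\cdot,\cdot)_{V^i}$, and, as you yourself point out, this requires $\phi h=h\phi$ on $V^0$ (equivalently $\phi(V^{0,j})=V^{0,p-j}$ for all $j$), which is not a consequence of hypotheses (A)--(D). Patching this by importing the lattice computations of Sections 3--4 is not admissible at this point: Lemma \ref{Lem:<>()} is a step inside the proof of the abstract Theorem \ref{Thm:uni}, whose entire purpose is that (A)--(D) alone imply unitarity, the conditions being verified for the holomorphic VOAs only afterwards in Section 5.2; with your patch, Theorem \ref{Thm:uni} would carry an extra hypothesis. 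Moreover the extra input is avoidable, which is exactly what the paper's proof shows: one regards each $V^{i,j}$ as an irreducible unitary $(V^{0,0},\phi)$-module --- for this one needs only that $\phi$ restricts to an anti-linear involution of $V^{0,0}$, a condition tacitly presupposed by the formulation of (D) and strictly weaker than commutation of $\phi$ and $h$ on all of $V^0$. Since $\Phi|_{V^{i,j}}=\Phi^i|_{V^{i,j}}$ satisfies $\Phi(a_nv)=\phi(a)_n\Phi(v)$ for $a\in V^{0,0}$, Lemmas \ref{Rem:specify} and \ref{uniqueF} identify $\Phi(V^{i,j})$ with the $V^{0,0}$-submodule of $V^{p-i}$ isomorphic to the contragredient of $V^{i,j}$, and the nondegenerate block pairing (which you established) identifies that contragredient with $V^{p-i,p-j}$; since the $V^{k,\ell}$ are pairwise non-isomorphic irreducibles, this forces $\Phi(V^{i,j})=V^{p-i,p-j}$. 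Your parenthetical alternative route has the same defect plus one more: $\psi(V^{0,j})=V^{j,0}$ requires $\psi^{-1}h\psi=f$, which follows from (C) only if $\psi^{2}$ commutes with $f$ (true in the application, where $\psi^{2}=1$, but not granted abstractly).
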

\begin{proof} Clearly, $(V^{0,0},\phi)$ is a simple unitary VOA and $V^{i,j}$ is an irreducible unitary $(V^{0,0},\phi)$-module.
	Hence by Lemma \ref{Rem:specify} (2) and Lemma \ref{uniqueF}, the map $\Phi$ sends $V^{i,j}$ to the submodule of $V^{p-i,p-j}$ isomorphic to the contragredient module of $V^{i,j}$, which is $V^{p-i,p-j}$ by Remark \ref{Rem:3^2} (4).
	Hence we obtain (1).
	
	Let $u\in V^i$, $v\in V^j$ with $i\neq j$.
	Clearly $i- j\neq 0 \mod p$.
	By (1), we have $\Phi(v)\in V^{p-j}$.
	Since the contragredient module of $V^i$ is not isomorphic to $V^{p-j}$, we have $\langle u,\Phi(v)\rangle=0$.
	By \eqref{Eq:()<>} and the definition of the form $(\cdot,\cdot)$, we obtain (2).
\end{proof}

\begin{prop}\label{Prop:anti} The anti-linear map $\Phi$ is an anti-linear involution of $V$.
\end{prop}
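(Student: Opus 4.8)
The plan is to reduce Proposition~\ref{Prop:anti} to the single assertion that $\Phi$ intertwines the vertex operation, i.e. $\Phi(u_nv)=\Phi(u)_n\Phi(v)$ for all $u,v\in V$ and $n\in\Z$. Indeed, $\Phi$ is anti-linear and bijective with $\Phi\circ\Phi=\mathrm{id}$ by its construction and the discussion preceding Lemma~\ref{Lem:<>()}, while $\Phi(\1)=\1$ and $\Phi(\omega)=\omega$ since $\1,\omega\in V^{0,0}\subseteq V^0$ and $\Phi|_{V^0}=\phi$. So the only substantive point is the intertwining property. To organize it I would introduce
$$D=\{u\in V\mid \Phi(u_nv)=\Phi(u)_n\Phi(v)\ \text{for all }v\in V,\ n\in\Z\}.$$
First I would check that $D$ is a vertex subalgebra: for $u,u'\in D$ one expands $(u_mu')_n$ by the Borcherds iterate identity, whose coefficients are integers, and pushes $\Phi$ through each factor using $u,u'\in D$ (anti-linearity is harmless since the coefficients are real). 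I would also record that, by the skew-symmetry $Y(u,z)v=e^{zL(-1)}Y(v,-z)u$, membership in $D$ is two-sided, i.e. $u\in D$ if and only if $\Phi(v_nu)=\Phi(v)_n\Phi(u)$ for all $v,n$; this uses that $\Phi$ commutes with $L(-1)=\omega_{0}$, which holds once $\omega\in D$.

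The first base case is $V^0\subseteq D$. For $a\in V^0$ and $v\in V^i$ one has $\Phi(a_nv)=\Phi^i(a_nv)=\phi(a)_n\Phi^i(v)=\Phi(a)_n\Phi(v)$ by the defining property of $\Phi^i$ (from Lemma~\ref{Rem:specify} and Assumption~(B)) together with $\Phi|_{V^0}=\phi$; the case $i=0$ is Assumption~(A) itself. In particular $\omega\in V^{0,0}\subseteq D$, which justifies the two-sidedness above and shows that intertwining also holds whenever the \emph{second} argument lies in $V^0$.

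The crux is the second base case $\psi(V^0)\subseteq D$, where $\psi(V^0)=\bigoplus_{i}V^{i,0}$. By \eqref{Eq:comm} and Assumption~(D), $\Phi$ coincides on $\psi(V^0)$ with $\psi\phi\psi^{-1}$, the anti-linear involution making $\psi(V^0)$ a unitary VOA, so $\Phi(a_nb)=\Phi(a)_n\Phi(b)$ for $a,b\in\psi(V^0)$. What remains is to upgrade this to an arbitrary second argument, and this is where I expect the main obstacle. I would exploit the symmetry between the triples $(f,V^0,\phi)$ and $(h,\psi(V^0),\psi\phi\psi^{-1})$, which are interchanged by $\psi$ (Assumptions~(C),(D), together with the grading swap underlying $\psi(V^0)=\bigoplus_iV^{i,0}$). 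Transporting Assumption~(B) through $\psi$ exhibits $V=\bigoplus_i\psi(V^i)$ as a sum of unitary $\psi(V^0)$-modules, whose module-conjugation maps, normalized by the mirror of \eqref{Eq:choice}, assemble into an anti-linear involution $\widetilde{\Phi}$ that, by the mirror of the first base case, intertwines $\psi(V^0)$ with all of $V$. The hard part is proving $\widetilde{\Phi}=\Phi$: both restrict to $\phi$-antilinear module maps $V^{i,j}\to V^{p-i,p-j}$ over the simple unitary VOA $V^{0,0}$, so by Lemma~\ref{uniqueF} they differ by a scalar $\nu_{i,j}$ on each $V^{i,j}$. Using that $\Phi$ intertwines with second argument in $V^0$ while $\widetilde{\Phi}$ intertwines with first argument in $\psi(V^0)$, the products spanning $V^{i,j}=\Span_\C\{x_ny\mid x\in V^{i,0},\,y\in V^{0,j}\}$ force $\nu_{i,j}=\nu_{0,j}$; the two normalizations \eqref{Eq:choice} and \eqref{Eq:p()} (on $V^{i,0}$ and on $V^{0,j}$, respectively) give $\nu_{i,0}=\nu_{0,j}=1$, whence $\widetilde{\Phi}=\Phi$ and $\psi(V^0)\subseteq D$.

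Finally I would conclude using Remark~\ref{Rem:3^2}: since $V^{i,j}=\Span_\C\{u_nv\mid u\in V^{i,0},\ v\in V^{0,j},\ n\in\Z\}$, the subalgebra generated by $V^0$ and $\psi(V^0)$ is all of $V$. As $D$ is a vertex subalgebra containing both, $D=V$, which is exactly the intertwining property and completes the proof that $\Phi$ is an anti-linear involution of $V$.
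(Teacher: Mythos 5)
Your proposal is correct, and its overall architecture matches the paper's: both reduce the proposition to the intertwining property $\Phi(u_nv)=\Phi(u)_n\Phi(v)$, establish it for first arguments in $V^0$ via the module maps $\Phi^i$, transfer it to second arguments in $V^0$ by skew-symmetry (using that $\Phi$ commutes with $L(-1)$), and then combine the Borcherds identity with the spanning statement of Remark \ref{Rem:3^2} to propagate it to all of $V$. The genuine difference is your second base case: you prove that $\Phi$ intertwines first arguments in $\psi(V^0)=\bigoplus_i V^{i,0}$ against \emph{arbitrary} second arguments, by constructing the mirror map $\widetilde{\Phi}$ from the $\psi$-transported unitary $\psi(V^0)$-module structures and showing $\widetilde{\Phi}=\Phi$, using Lemma \ref{uniqueF} (scalars $\nu_{i,j}$ on the irreducible $V^{0,0}$-modules $V^{i,j}$), the normalizations behind \eqref{Eq:p()}, \eqref{Eq:choice} and \eqref{Eq:comm} (giving $\nu_{i,0}=\nu_{0,j}=1$), and the spanning of $V^{i,j}$ by products $x_ny$, $x\in V^{i,0}$, $y\in V^{0,j}$ (forcing $\nu_{i,j}=\nu_{0,j}$). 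The paper has no counterpart of this lemma: its proof goes straight to the Borcherds identity with $x\in V^{0,j}$, $y\in V^{i,0}$, $u\in V^{k,\ell}$ and asserts that each term can be handled ``by the assumptions on $x$ and $y$''; but the terms $y_{q+i}u$ and $y_{q+r-i}(x_iu)$ have first argument $y\in V^{i,0}$ and, when $k\neq 0$, second argument outside $V^0$ --- a configuration covered by none of the cases established beforehand (first argument in $V^0$; second argument in $V^0$; both arguments in $\psi(V^0)$, which \eqref{Eq:comm} yields). Moreover, skew-symmetry and Borcherds manipulations alone only permute this missing configuration into itself, so some additional input is genuinely required; your $\widetilde{\Phi}=\Phi$ argument --- for which the paper's equations \eqref{Eq:p()}--\eqref{Eq:comm} are visibly the intended scaffolding --- is exactly that input. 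In short, your route is longer but complete, whereas the paper's write-up (following \cite{CLS}) leaves the crucial case implicit; what the paper's brevity buys is only economy of exposition, at the cost of a step a careful reader must reconstruct along the lines you gave.
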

\begin{proof} Since $\phi$ is an anti-linear automorphism of $V^0$, $\Phi$ fixes the vacuum vector and the conformal vector of $V$.
	Since $(V^0,\phi)$ is unitary, the equation 
	\begin{equation}
		\Phi(u_nv)=\Phi(u)_n\Phi(v)\label{Eq:anti}
	\end{equation} holds for $u,v\in V^0$ and $n\in\Z$.
	By the definition of $\Phi^i$ for $i=1,2$, \eqref{Eq:anti} holds for $u\in V^0$ and $v\in V^i$.
	By the skew symmetry, we have 
	\[
	u_{n}v = (-1)^{n+1} v_{n}u +\sum_{i\geq 1} \frac{(-1)^{n+i+1}}{i!} L(-1)^i (v_{n+1} u)
	\]
	for $u,v\in V$ and $n\in\Z$.
	Hence the equation \eqref{Eq:anti} also holds for $u\in V^i$ and $v\in V^0$.
	
	Let $x\in V^{0,j}$, $y\in V^{i,0}$ and $u\in V^{k,\ell}$.
	By Borcherds' identity, for $r,q\in\Z$,
	$$(x_{r}y)_{q}u=\sum_{i=0}^\infty(-1)^i\binom{r}{i}\left(x_{r-i}(y_{q+i}u)-(-1)^ry_{q+r-i}(x_{i}u)\right).$$
	By the assumptions on $x$ and $y$ and the identity above, 
	we have $$\Phi((x_{r}y)_{q}u)=(\Phi(x)_{r}\Phi(y))_{q}\Phi(u)=\Phi(x_{r}y)_{q}\Phi(u).$$
	By Remark \ref{Rem:3^2} (2), we obtain $\Phi(u_{n}v)=\Phi(u)_{n}\Phi(v)$ for all $x,y\in V$ and $n\in\Z$.
\end{proof}

By Lemma \ref{Lem:<>()} (2), the invariant property of $\langle\cdot,\cdot\rangle$ and Proposition \ref{Prop:anti}, we obtain the following proposition:

\begin{prop}\label{Prop:Unitary}
	The positive-definite Hermitian form $(\ , \ )$ on $V$ satisfies the invariant property for $(V,\Phi)$.
\end{prop}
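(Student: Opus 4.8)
The plan is to deduce the invariance of the Hermitian form $(\ ,\ )$ from the invariance of the symmetric bilinear form $\langle\cdot,\cdot\rangle$, by exploiting the identity $(u,v)=\langle u,\Phi(v)\rangle$ of Lemma \ref{Lem:<>()} (2) together with the fact, established in Proposition \ref{Prop:anti}, that $\Phi$ is an anti-linear involution with $\Phi(x_{n}y)=\Phi(x)_{n}\Phi(y)$. Since $V$ is simple, self-dual and of CFT-type, it carries a unique normalized invariant symmetric bilinear form $\langle\cdot,\cdot\rangle$ (cf.\ \cite{L94,FHL}) obeying
\[
\langle Y(e^{zL(1)}(-z^{-2})^{L(0)}a,z^{-1})u,v\rangle=\langle u,Y(a,z)v\rangle
\]
for all $a,u,v\in V$; this is the only input from the theory of invariant bilinear forms that I would need.

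First I would rewrite the left-hand side of the invariant property \eqref{Eq:Inv} for $(V,\Phi)$ using Lemma \ref{Lem:<>()} (2) and then transfer the adjoint vertex operator to the second slot by the bilinear invariance above, obtaining
\[
(Y(e^{zL(1)}(-z^{-2})^{L(0)}a,z^{-1})u,v)=\langle Y(e^{zL(1)}(-z^{-2})^{L(0)}a,z^{-1})u,\Phi(v)\rangle=\langle u,Y(a,z)\Phi(v)\rangle.
\]
For the right-hand side, Lemma \ref{Lem:<>()} (2) gives $(u,Y(\Phi(a),z)v)=\langle u,\Phi(Y(\Phi(a),z)v)\rangle$, so it remains to verify the purely algebraic identity $\Phi(Y(\Phi(a),z)v)=Y(a,z)\Phi(v)$. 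Expanding $Y(\Phi(a),z)v=\sum_{n}\Phi(a)_{n}v\,z^{-n-1}$ and applying $\Phi(x_{n}y)=\Phi(x)_{n}\Phi(y)$ together with $\Phi\circ\Phi=\mathrm{id}$ yields $\Phi(\Phi(a)_{n}v)=a_{n}\Phi(v)$ for each $n$, which is exactly what is needed to identify the two series. Comparing the two displays then gives the invariant property, which is the assertion of the proposition; positive-definiteness is not at issue here, being built into the construction of $(\ ,\ )$ as an orthogonal direct sum of the positive-definite forms $(\cdot,\cdot)_{V^{i}}$.

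The delicate point, as always when passing between invariant bilinear and sesquilinear forms, will be the bookkeeping of the formal variable $z$ under the anti-linear map $\Phi$ in the last step, namely checking that applying $\Phi$ coefficientwise to $Y(\Phi(a),z)v$ reproduces $Y(a,z)\Phi(v)$ with the same $z$ rather than a conjugated variable. I would handle this exactly as in \cite[Remark 5.3.3]{FHL} and \cite[Proposition 2.12]{Dlin}; in fact the whole computation is the module-over-itself case of Lemma \ref{Rem:specify} (2), whose proof uses only these formal properties of $\Phi$ and $\langle\cdot,\cdot\rangle$ and therefore applies here verbatim, with $V$ self-dual so that its contragredient module is $V$ itself. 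This is the same mechanism by which Remark \ref{Rem:inv} recovers the Hermitian invariance from the bilinear one.
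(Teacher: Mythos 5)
Your proposal is correct and follows exactly the paper's route: the paper derives this proposition precisely by combining Lemma \ref{Lem:<>()} (2), the invariance of the bilinear form $\langle\cdot,\cdot\rangle$, and Proposition \ref{Prop:anti}, which is the computation you spell out. Your extra care with the formal-variable bookkeeping and the observation that the argument is the self-dual, module-over-itself instance of Lemma \ref{Rem:specify} (2) just makes explicit what the paper leaves implicit.
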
 



Combining Propositions \ref{Prop:anti} and \ref{Prop:Unitary}, we have proved Theorem \ref{Thm:uni}.

\subsection{Unitary forms for holomorphic VOAs of central charge $24$}
As we discussed in Section \ref{NieOrb}, every holomorphic VOA of central charge $24$ with $V_1\neq 0$ can be constructed by a single orbifold construction from a Niemeier lattice VOA.  

Let $(N,g)$ be pair of a Niemeier lattice and an automorphism of $V_N$ as described in Section \ref{NieOrb} such that 
$V\cong \widetilde{V_N}(g)$. Then   
\[
V= V_N^g \oplus V_N[g]_0 \oplus \cdots \oplus  V_N[g^{p-1}]_0, 
\]
where $V_N[g^i]$ denotes the irreducible $g^i$-twisted module of $V_N$. 

Let $L$ be the even lattice such that $V_L\cong \mathrm{Com}_V ( \mathrm{Com}_V (M(\h)))$, where $\h$ is a Cartan subalgebra of $V_1$ and suppose $g=\hat{\tau}\exp(2\pi i\beta(0))\in \Aut(V_N)$.  
Then $L\cong N_\beta^\tau$ and $V_N^g> V_L\otimes V_{\Lambda_\tau}^{\hat{\tau}}$. 

Set  
\[
V_N=\bigoplus_{\lambda+N^\tau \in (N^\tau)^*/N^\tau}
 V_{\lambda+N^\tau}\otimes V_{\lambda'+N_\tau}. 
 \] 
Then 
 \[
 V_N^g =  \bigoplus_{\lambda+N^\tau \in (N^\tau)^*/N^\tau}
\left( V_{\lambda+N^\tau}\otimes V_{\lambda'+N_\tau} \right)^g= \bigoplus_{\lambda+L \in (N^\tau)^*/L}
 V_{\lambda+L}\otimes W_{\lambda} < V,  
 \] 
where $W_{\lambda}$, $\lambda+L \in (N^\tau)^*/L$, are irreducible $V_{N_\tau}^{\hat{\tau}}$-modules and eigenspaces of $\hat{\tau}$ on $V_{\lambda'+N_\tau} $.  
 
Define $f\in \Aut(V)$ so that  $f$ acts on $V_N[g^i]_0$ as a multiplication of the scalar $\xi^i$. 
Then $V^f =V_N^g$ and there is a $\gamma\in \mathbb{Q}\otimes_\Z N^\tau$ such that $\langle \gamma|\beta \rangle \notin \Z$ and $f=\exp(2\pi i \gamma(0))$.  As we discussed in Section \ref{NieOrb}, the Lie subalgebra $(V_N^g)_1$ is proper subalgebra of $V_1$ and $(V_N^g)_1$  is non-abelian. By Lemma  \ref{Lem:4.1}, Proposition \ref{prop4.4} and Remark \ref{Invpsi}, there is a root of $V_1$ and a lift $\psi_\alpha\in \Stab_{\Aut(V)}(V_{L}\otimes W)$ of a reflection $s_\alpha\in W(V_1)$ such that $\psi_\alpha((V_N^g)_1) \neq (V_N^g)_1$ and $\psi_\alpha^2=1$. 
For simplicity, we use $w$ and $\psi$ to denote $s_\alpha$ and $\psi_\alpha$, respectively.

Define $h=\psi f\psi^{-1}$.  Then $h=\exp(2\pi i w(\gamma)(0))$ and it is clear that both $f$ and $h$ fix  $V_L\otimes V_{\Lambda_\tau}^{\hat{\tau}} $ point-wisely. Since all irreducible modules for $V_L\otimes V_{\Lambda_\tau}^{\hat{\tau}} $ are simple current modules, the subgroup of $\Aut(V)$ that fixes $V_L\otimes V_{\Lambda_\tau}^{\hat{\tau}} $ point-wisely is a finite abelian group. In particular,  $[f,h]=1$.  Moreover, we have 
$$V^{0,0} =  V^{<f,h>}= \bigoplus _{\lambda+L \in J/L} V_{\lambda+L} \otimes W_{\lambda},$$ 
where $J= \{ \lambda\in L^*\mid \langle \lambda, \gamma \rangle \in \Z, \langle \lambda, w(\gamma) \rangle \in \Z\}$. 

\begin{lem}
We have $w(J)=J$ and $\psi(V^{0,0})=V^{0,0}$. 
\end{lem}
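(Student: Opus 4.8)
The plan is to treat the two assertions separately. The first, $w(J)=J$, is a short lattice computation that only uses that $w$ is an involutive isometry. The second, $\psi(V^{0,0})=V^{0,0}$, I would deduce most cleanly from a group-theoretic observation about how $\psi$ interacts with $f$ and $h$; it can alternatively be read off from the first assertion together with the action of $\psi$ on the homogeneous summands of $V$, but that route is heavier.

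For $w(J)=J$, recall that $w=s_\alpha$ lies in $W(V_1)$, which we regard as a subgroup of $O(L)$; hence $w$ is an isometry of $L$, and therefore of $L^*$, and $w^2=1$ because $w$ is a reflection. Thus $w$ is self-adjoint, so that $\langle w(\lambda),\gamma\rangle=\langle\lambda,w(\gamma)\rangle$ and $\langle w(\lambda),w(\gamma)\rangle=\langle\lambda,\gamma\rangle$ for every $\lambda\in L^*$. Comparing these two identities with the two defining congruences of $J$, the conditions $\langle w(\lambda),\gamma\rangle\in\Z$ and $\langle w(\lambda),w(\gamma)\rangle\in\Z$ are seen to be exactly $\langle\lambda,w(\gamma)\rangle\in\Z$ and $\langle\lambda,\gamma\rangle\in\Z$, i.e.\ $\lambda\in J$. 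Hence $w(\lambda)\in J$ if and only if $\lambda\in J$, which yields $w(J)=J$.

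For $\psi(V^{0,0})=V^{0,0}$, I would show that $\psi$ normalizes the group $G=\langle f,h\rangle$. By construction $h=\psi f\psi^{-1}$, and since $\psi^2=1$ we also obtain $\psi h\psi^{-1}=\psi^2 f\psi^{-2}=f$; thus conjugation by $\psi$ interchanges the generators $f$ and $h$ and in particular preserves $G$. As $V^{0,0}=V^{G}$ is the fixed-point subspace of $G$, it follows at once that $\psi(V^{0,0})=V^{\psi G\psi^{-1}}=V^{G}=V^{0,0}$. Alternatively, one can verify the same statement by noting that $\psi$ preserves $V_L\otimes W$ and, being a lift of $w$, permutes the irreducible summands $V_{\lambda+L}\otimes W_\lambda$ of $V$ via $\lambda+L\mapsto w(\lambda)+L$ (the inner automorphisms in $\Inn(V_L)$ act on each summand as scalars and so fix each summand), after which $w(J)=J$ guarantees that the index set $J/L$ is preserved. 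The only delicate point in this second approach is the identification of the precise summand onto which $\psi$ maps $V_{\lambda+L}\otimes W_\lambda$, that is, checking that $\psi$ induces exactly $\bar w$ on $\mathcal{D}(L)$. This is the main (and only mild) obstacle, and it is precisely what the group-theoretic argument above avoids, since that argument bypasses the module bookkeeping entirely.
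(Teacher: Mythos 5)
Your proposal is correct. The first half, the computation showing $w(J)=J$, is exactly the paper's argument: the paper writes $\langle w(\lambda)|w(\gamma)\rangle=\langle\lambda|\gamma\rangle\in\Z$ and $\langle w(\lambda)|\gamma\rangle=\langle w^2(\lambda)|w(\gamma)\rangle=\langle\lambda|w(\gamma)\rangle\in\Z$, which is the same use of $w$ being an involutive isometry that you package as self-adjointness. Where you genuinely diverge is on $\psi(V^{0,0})=V^{0,0}$: the paper offers no separate argument at all --- after proving $w(J)=J$ it simply says ``we have the desired result,'' implicitly relying on the module bookkeeping you describe in your alternative route, namely that $\psi\in\Stab_{\Aut(V)}(V_L\otimes W)$ is a lift of $w$ and therefore permutes the multiplicity-free summands $V_{\lambda+L}\otimes W_\lambda$ of $V$ according to the action of $w$ on $\mathcal{D}(L)$, so that $w(J)=J$ preserves the index set $J/L$ of $V^{0,0}$. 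Your primary argument replaces this with a cleaner group-theoretic one: since $V^{0,0}=V^{\langle f,h\rangle}$, $h=\psi f\psi^{-1}$ by definition, and $\psi^2=1$ (which the paper does arrange, in Remark \ref{Invpsi} and again in Section 5.2), conjugation by $\psi$ swaps $f$ and $h$, hence normalizes $\langle f,h\rangle$ and preserves its fixed-point subspace. This buys robustness --- no identification of how $\psi$ acts on individual summands is needed, and in fact the second assertion becomes logically independent of the first --- while the paper's implicit route keeps everything in terms of the lattice data, which matches how the lemma is consumed immediately afterwards (the next lemma needs $w(J)=J$ to view $w$ as an isometry of the lattice $X$ with $P_0(X)=J$). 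Note that for this reason your lattice computation is not dispensable: your group-theoretic argument proves only the second assertion, so both halves of your write-up are needed, and together they do prove the full lemma.
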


\begin{proof}
	Let $\lambda\in J$. Then $\langle w(\lambda)|w(\gamma)\rangle =\langle \lambda| \gamma\rangle\in \Z$ and $\langle w(\lambda)|\gamma \rangle =\langle w^2(\lambda)|w(\gamma)\rangle= \langle \lambda|w(\gamma)\rangle\in \Z$. Thus $w(\lambda)\in J$ for any $\lambda\in J$ and we have the desired result. 
\end{proof}

\begin{lem}
Let $X$ be a sublattice of $N$ such that $P_0(X)=J$.  Then  $V^{0,0} < V_X$ and $\psi$ can be considered as a lift of an isometry of $X$ in $\Aut(V_X)$. In particular, we have $\psi\phi \psi^{-1} =\phi $ on $V^{0,0} < V_X$.  
\end{lem}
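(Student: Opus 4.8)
The plan is to realize $V^{0,0}$ as a subVOA of a lattice VOA $V_X$ in such a way that the involution $\phi$ on $V^{0,0}$ is the restriction of the standard lattice involution of $V_X$, and then to exhibit $\psi$ as the restriction to $V^{0,0}$ of a lattice‑isometry automorphism of $V_X$. Since every element of $O(\hat X)$ preserves the positive‑definite Hermitian form of $V_X$ (by the Lemma immediately following Theorem \ref{aut}), such an automorphism commutes with the lattice involution; restricting this identity to $V^{0,0}$, which is $\psi$‑stable by the previous lemma, then yields $\psi\phi\psi^{-1}=\phi$ there. Thus the two substantive points are the containment $V^{0,0}<V_X$ and the identification of $\psi$ with a lattice isometry of $X$.

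For the containment I would argue by fixed points inside $V_N$. Since $\gamma,w(\gamma)\in\mathbb{Q}\otimes_\Z N^\tau\subset\h$, the maps $\tilde f=\exp(2\pi i\gamma(0))$ and $\tilde h=\exp(2\pi i w(\gamma)(0))$ are automorphisms of the lattice VOA $V_N$, and their common fixed‑point subalgebra is the lattice VOA $V_{N_\gamma\cap N_{w(\gamma)}}$, where $N_\delta=\{x\in N\mid\langle\delta|x\rangle\in\Z\}$. Because $\gamma,w(\gamma)\in\h^\tau$, one has $\langle\gamma|x\rangle=\langle\gamma|P_0(x)\rangle$, so $N_\gamma\cap N_{w(\gamma)}=\{x\in N\mid P_0(x)\in J\}$; as $N$ is even unimodular we have $P_0(N)=(N^\tau)^*\supseteq J$, whence this lattice projects onto $J$ under $P_0$ and may be taken as $X$. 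On $V_N^g\subset V_N$ the automorphisms $f$ and $h$ restrict to $\tilde f$ and $\tilde h$, so $V^{0,0}=(V_N^g)^{\langle f,h\rangle}=V_N^g\cap V_X\subset V_X$. Moreover $g=\hat\tau\exp(2\pi i\beta(0))$ preserves the Hermitian form of $V_N$ (by the two Lemmas giving $O(\hat N)<\Aut_{(\ ,\ )}(V_N)$ and $\exp(2\pi i\beta(0)/n)\in\Aut_{(\ ,\ )}(V_N)$), so the unitary involution on $V^0=V_N^g$ is the restriction of the lattice involution of $V_N$, and consequently $\phi$ on $V^{0,0}$ coincides with the lattice involution of $V_X$.

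For the identification of $\psi$, recall that $\psi$ is a lift of the reflection $w=s_\alpha\in W(V_1)$, that $W(V_1)$ is a subgroup of $O(L)$ by Lemma \ref{Lem:4.1}, and that $w(J)=J$ by the previous lemma. The main obstacle is to promote $w$ from an isometry of $L$ (equivalently of $\h^\tau$) to an isometry $w_X$ of the lattice $X\subset N$ whose standard lift in $O(\hat X)$ agrees with $\psi$ on $V^{0,0}$. I would use that $\psi$ normalizes $\langle f,h\rangle$ (indeed $\psi f\psi^{-1}=h$ together with $\psi^2=1$ forces $\psi h\psi^{-1}=f$) and stabilizes $V_L\otimes W$, so that $\psi$ permutes the simple‑current summands $V_{\lambda+L}\otimes W_\lambda$ of $V^{0,0}$ by $\lambda\mapsto w(\lambda)$. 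Exploiting the compatibility of $w$ with $\tau$ supplied by the explicit data $(N,\tau,\beta)$ of Section \ref{NieOrb} (so that $w$ extends to an isometry of $N$ commuting with $\tau$ and preserving $X$), one obtains $w_X\in O(X)$ and a standard lift $\tilde\psi\in O(\hat X)$. By the uniqueness of the invariant form (Lemma \ref{uniqueF}) together with Remark \ref{Invpsi}, $\tilde\psi$ and $\psi$ then agree on $V^{0,0}$ up to an adjustment by a real torus element $\exp(2\pi i a_{(0)})$ with $a\in\h^\tau$, which again preserves the form; hence $\psi|_{V^{0,0}}\in\Aut_{(\ ,\ )}(V^{0,0})$.

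Combining the two constructions, $\tilde\psi\in O(\hat X)<\Aut_{(\ ,\ )}(V_X)$ satisfies $\tilde\psi^{-1}\phi\,\tilde\psi=\phi$ on $V_X$ (criterion (1) of the Lemma defining $\Aut_{(\ ,\ )}$), and restricting to $V^{0,0}$, where $\psi=\tilde\psi$ up to a form‑preserving factor and $\phi$ is the lattice involution of $V_X$, gives $\psi\phi\psi^{-1}=\phi$. I expect the genuinely delicate point to be the extension of $w$ to an isometry of $X$ rather than merely of $\h^\tau$: since $\alpha$ is a root of $V_1$ and not of $(V_N^g)_1$, the reflection $s_\alpha$ is ``outer'' relative to the lattice $L$, and its realization on $N$ preserving $X$ must be read off from the explicit choices of $(N,\tau,\beta)$ tabulated in Section \ref{NieOrb}.
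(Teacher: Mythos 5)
Your overall strategy is the paper's: embed $V^{0,0}$ into a lattice VOA $V_X$, observe that $\phi$ on $V^{0,0}$ is the restriction of the standard lattice involution, and then obtain $\psi\phi\psi^{-1}=\phi$ by realizing $\psi$ on $V^{0,0}$ as a lift of an isometry of $X$, since such lifts commute with $\phi$. For the containment $V^{0,0}<V_X$ your route is genuinely different and works: the paper identifies $J=(N^\tau)^*\cap w((N^\tau)^*)>L$ and writes $V_X=\bigoplus_{\lambda+L\in J/L}V_{\lambda+L}\otimes V_{\lambda'+N_\tau}$, so that the containment follows from $W_\lambda<V_{\lambda'+N_\tau}$, whereas you compute $V^{0,0}=V_N^g\cap V_N^{\langle\tilde f,\tilde h\rangle}=V_N^g\cap V_X$ as a common fixed-point subVOA of $V_N$. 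One debt in your version: the inclusion $J\subseteq(N^\tau)^*$ is asserted, not proved; it does not follow from unimodularity alone but from $V^f=V_N^g$ together with $V_N^g=\bigoplus_{\lambda+L\in(N^\tau)^*/L}V_{\lambda+L}\otimes W_\lambda$, which forces $\{\lambda\in L^*\mid\langle\lambda|\gamma\rangle\in\Z\}=(N^\tau)^*$ --- exactly the identification of $J$ the paper states first.

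The genuine gap is in the step that actually delivers $\psi\phi\psi^{-1}=\phi$. You propose to extend $w=s_\alpha$ to an isometry of the whole Niemeier lattice $N$ commuting with $\tau$ and preserving $X$, ``supplied by the explicit data of Section \ref{NieOrb}''; no such extension is constructed, the tables there do not provide one, and you yourself flag this as the delicate point, so it remains unproven. It is also more than is needed: the paper only uses that $w$, extended by the identity on the coinvariant part (i.e.\ acting as $w$ on $\h^\tau$ and trivially on its orthogonal complement), satisfies $w(X)<X$ --- this is where $w(J)=J$ from the previous lemma enters --- and hence already defines an isometry of $X$ itself; no action on all of $N$ and no commutation with $\tau$ is invoked. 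Your fallback argument is likewise not sound as written: Lemma \ref{uniqueF} concerns uniqueness of invariant sesquilinear forms on irreducible modules, and Remark \ref{Invpsi} concerns choosing the lift $\psi_\alpha$ to be an involution; neither yields the claim that the lattice lift $\tilde\psi$ and $\psi$ agree on $V^{0,0}$ up to a \emph{real} torus element $\exp(2\pi\sqrt{-1}\,a_{(0)})$ with $a\in\h^\tau$. Comparing those two automorphisms is precisely what would have to be proven, and the reality of $a$ (needed so that the correcting factor commutes with $\phi$) is an additional unproved assumption.
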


\begin{proof}
We first note that $J= (N^\tau)^* \cap w((N^\tau)^*) > L$. Then 
\[
V_X = \bigoplus_{\lambda+L\in J/L} V_{\lambda+L}\otimes V_{\lambda'+N_\tau}.   
\]
Since $W_{\lambda} < V_{\lambda'+N_\tau}$, we have $V^{0,0} < V_X$. Since $w(X) <X$, $w$ defines an isometry of $X$ and thus $\psi \phi \psi^{-1} = \phi$ on $V_X$.  We have $\psi \phi \psi^{-1} = \phi$ on $V^{0,0}$ as desired.
\end{proof}

Therefore, $V$,$f$ and $h$ satisfy the conditions in Theorem \ref{Thm:uni} and the main theorem follows. 
\begin{thm}
Any (strongly regular) holomorphic vertex operator algebra of central charge $24$ and with non-trivial weight one subspace is unitary. 
\end{thm}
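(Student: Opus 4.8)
The plan is to go through the holomorphic VOAs on Schellekens' list according to the conjugacy class (frame shape) of the isometry $\tau$ governing the orbifold realization, and to treat the generic classes by a single application of Theorem \ref{Thm:uni}. Since $V$ is strongly regular, it is simple, self-dual and of CFT-type, so the standing hypotheses on $V$ in Theorem \ref{Thm:uni} hold automatically. Three families exhaust the cases with $V_1\neq 0$. First, when $V$ is itself a lattice VOA (the frame shape $1A$, i.e.\ $\rank V_1 = 24$), $V\cong V_N$ for a Niemeier lattice $N$ and $V$ is unitary by the Dong--Lin theorem on lattice VOAs recalled above. Second, the $\Z_2$-orbifold cases, whose weight one Lie algebras appear in Table \ref{T:LieZ2}, are unitary by Theorem \ref{DLinZ2}. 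It remains to treat the third family, the higher-order orbifolds listed in Tables \ref{table3to8}--\ref{10F}.

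For this family I would, following the construction above, write $V\cong\widetilde{V_N}(g)$ with $g=\hat\tau\exp(2\pi i\beta(0))$ of order $p=|g|>2$ via Theorem \ref{ThmHM}, and choose $(N,g)$ by Proposition \ref{prop4.4} so that $(V_N^g)_1$ is a proper non-abelian subalgebra of $V_1$. I then build the commuting pair required by Theorem \ref{Thm:uni}: let $f$ act on the twisted summand $V_N[g^i]_0$ by $\xi^i$, so that $V^0=V^f=V_N^g$, $V^i=V_N[g^i]_0$ and $f=\exp(2\pi i\gamma(0))$ with $\langle\gamma|\beta\rangle\notin\Z$; using Lemma \ref{Lem:4.1}, Proposition \ref{prop4.4} and Remark \ref{Invpsi}, pick an involutive lift $\psi=\psi_\alpha$ of a Weyl reflection $w=s_\alpha$ with $\psi\bigl((V_N^g)_1\bigr)\neq(V_N^g)_1$; and set $h=\psi f\psi^{-1}=\exp(2\pi i\,w(\gamma)(0))$. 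Because every irreducible $V_L\otimes V_{\Lambda_\tau}^{\hat\tau}$-module is a simple current, the pointwise stabilizer of $V_L\otimes V_{\Lambda_\tau}^{\hat\tau}$ in $\Aut(V)$ is a finite abelian group, so $[f,h]=1$, and $f,h$ both have order $p$.

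The remaining task is to check Assumptions (A)--(D). For (A), both $\hat\tau\in O(\hat N)$ and $\exp(2\pi i\beta(0))$ preserve the lattice Hermitian form, so $g\in\Aut_{(\,,\,)}(V_N)$ and the fixed-point subVOA $(V_N^g,\phi)=(V^0,\phi)$ is unitary. For (B), $V_N$ is a lattice VOA, so Lemma \ref{twisted} makes each irreducible $g^i$-twisted module of $V_N$ a unitary twisted module; restricting its positive-definite invariant form to $V_N^g$ and to the eigenspace $V_N[g^i]_0=V^i$ yields a unitary $(V^0,\phi)$-module. Assumption (C) is immediate since $\psi^2=1$ gives $\psi^{-1}f\psi=\psi f\psi^{-1}=h$. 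Assumption (D) is exactly the content of the two lemmas just proved: $w(J)=J$ forces $\psi(V^{0,0})=V^{0,0}$, and realizing $V^{0,0}$ inside a lattice subVOA $V_X$ on which $\psi$ acts as a lift of an isometry of $X$ gives $\psi\phi\psi^{-1}=\phi$ on $V^{0,0}$. Theorem \ref{Thm:uni} then furnishes an anti-linear involution $\Phi$ and a positive-definite invariant Hermitian form on $V$, so $V$ is unitary.

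I expect the crux to be Assumption (D), and in particular securing the involution $\psi$ with $\psi\bigl((V_N^g)_1\bigr)\neq(V_N^g)_1$. This is where the flexibility in the orbifold data is indispensable: Proposition \ref{prop4.4} is needed to guarantee that $(V_N^g)_1$ sits as a \emph{proper} ideal inside a simple ideal of $V_1$, so that some root $\alpha$ of $V_1$ lies outside $(V_N^g)_1$ and its reflection genuinely moves the subalgebra, while Remark \ref{Invpsi} is needed to normalize the lift to an involution. Once $\psi$ is pinned down this way, the transparency of its action on the lattice subVOA $V_X\supset V^{0,0}$ is what delivers $\psi\phi\psi^{-1}=\phi$. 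The only residual labor is bookkeeping: verifying case by case that the explicit data in Tables \ref{table3to8}--\ref{10F} meet the hypotheses of Proposition \ref{prop4.4} and Remark \ref{Invpsi}; no idea beyond Theorem \ref{Thm:uni} is required.
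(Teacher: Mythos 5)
Your proposal is correct and takes essentially the same route as the paper: the same case split (lattice VOAs via Dong--Lin, $\Z_2$-orbifold cases via Theorem \ref{DLinZ2}, higher-order orbifolds via Theorem \ref{Thm:uni}), the same construction of the commuting pair $(f,h)$ with $h=\psi f\psi^{-1}$ for an involutive lift $\psi$ of a Weyl reflection moving $(V_N^g)_1$, and the same verification of Assumptions (A)--(D) using Lemma \ref{twisted} for the twisted modules and the two lemmas on $w(J)=J$ and $V^{0,0}<V_X$ for Assumption (D).
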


\newcommand{\etalchar}[1]{$^{#1}$}
\providecommand{\bysame}{\leavevmode\hbox to3em{\hrulefill}\thinspace}
\providecommand{\MR}{\relax\ifhmode\unskip\space\fi MR }
\providecommand{\MRhref}[2]{%
  \href{http://www.ams.org/mathscinet-getitem?mr=#1}{#2}
}
\providecommand{\href}[2]{#2}

\end{document}